\newtheorem{theorem}{Theorem}[section]
\numberwithin{equation}{section}
\newtheorem{lemma}[theorem]{Lemma}
\newtheorem{definition}[theorem]{Definition}
\newtheorem{proposition}[theorem]{Proposition}
\newtheorem{corollary}[theorem]{Corollary}
\theoremstyle{remark}
\newtheorem{remark}[theorem]{Remark}
\newcommand{\Mcalp}{\mathcal{M}^+}
\newcommand{\Mcalm}{\mathcal{M}^-}
\begin{document}

\title[LOSS OF BOUNDARY CONDITIONS]{LOSS OF BOUNDARY CONDITIONS FOR FULLY NONLINEAR PARABOLIC EQUATIONS WITH SUPERQUADRADIC GRADIENT TERMS}
\author{ALEXANDER QUAAS, ANDREI RODRÍGUEZ}

\begin{abstract}
	We study whether the solutions of a fully nonlinear, uniformly para\-bolic equation with superquadratic growth in the gradient satisfy initial and homogeneous boundary conditions in the classical sense, a problem we refer to as the classical Dirichlet problem. Our main results are: the nonexistence of global-in-time solutions of this problem, depending on a specific largeness condition on the initial data, and the existence of local-in-time solutions for initial data $C^1$ up to the boundary. Global existence is know when boundary conditions are understood in the viscosity sense, what is known as the generalized Dirichlet problem. Therefore, our result implies loss of boundary conditions in finite time. Specifically, a solution satisfying homogeneous boundary conditions in the viscosity sense eventually becomes strictly positive at some point of the boundary.
\end{abstract}

\maketitle

\medskip
\medskip
\noindent \textbf{Keywords:} Loss of boundary conditions, generalized Dirichlet problem, viscosity solutions, gradient blow-up, fully nonlinear parabolic equations, viscous Hamilton-Jacobi equations, nonlinear eigenvalues.\\

\medskip
\noindent \textbf{MSC (2010):} 35D40, 35K55, 35K20, 35P30.

\tableofcontents

\section{Introduction and main results}
The present article is a contribution to the study of qualitative properties of viscosity solutions of the so-called Cauchy-Dirichlet problem for the following fully nonlinear parabolic equation with superquadratic growth in the term with gradient dependence:
\begin{align}
	u_t -\Mcalm(D^2u) = |Du|^p &\quad\textrm{ in } \Omega\times(0,T) \label{modeleq},\\
	u = 0 &\quad\textrm{ on } \partial\Omega\times(0,T) \label{boundarydata},\\
	u(x,0) = u_0(x) &\quad\textrm{ in } \overline{\Omega} \label{initialdata},
\end{align}
where $\Omega\subset\mathbb{R}^n$ is a bounded domain satisfying both uniform interior and exterior sphere conditions. While this is not strictly necessary for all our results, it does establish a better connection between our main theorems. See Remarks \ref{intSphereLocal} and \ref{rmkextSphereUni}. We also assume $p>2$ throughout, except for certain remarks regarding the case $p\leq 2$ made in this introduction. See also Remark \ref{rmkValuePLocal}. Here $\Mcalm$ denotes one of Pucci's extremal operators, which are defined as follows: let $A, X \in S(n)$, the symmetric $n\times n$ matrices equipped with the usual ordering, $I$ denote the identity matrix, and $0 < \lambda < \Lambda$. Then
\begin{align*}
	\Mcalm(X) ={}&  \Mcalm(X, \lambda, \Lambda) = \inf \{\mathrm{tr}(AX) | \lambda I \leq A \leq \Lambda I\},\\
	\Mcalp(X) ={}&  \Mcalp(X, \lambda, \Lambda) = \sup \{\mathrm{tr}(AX) | \lambda I \leq A \leq \Lambda I\}.
\end{align*}
Alternatively, if we denote by $\lambda_i=\lambda_i(X)$ the eigenvalues of $X$, then
\begin{align*}
	\Mcalm(X) ={}& \lambda \sum_{\lambda_i > 0} \lambda_i + \Lambda \sum_{\lambda_i < 0} \lambda_i,\\
	\Mcalp(X) ={}& \Lambda \sum_{\lambda_i > 0} \lambda_i + \lambda \sum_{\lambda_i < 0} \lambda_i.
\end{align*}
Pucci's operators are fundamental to the study of fully nonlinear equations, at once acting as barriers to all equations sharing the same ellipticity constants (owing to the first definition) and allowing fairly explicit computations to be carried out (owing to the second). The Dirichlet condition \eqref{boundarydata} will be considered both in the classical sense and in the generalized sense of viscosity solutions. We will stress the distinction when necessary. Precise definitions and more on this later in this introduction. On the other hand, condition \eqref{initialdata} is always meant in the classical (pointwise) sense. See Remark \ref{nobottomLOBC}.

We assume the compatibility condition 
\begin{equation*}
	u_0(x) = 0 \quad \textrm{for all } x\in\partial\Omega
\end{equation*}
is also satisfied in the pointwise sense and that $u_0\in C^1(\overline{\Omega})$. As with the assumptions on $\Omega$, it is not strictly necessary to assume this regularity for $u_0$ throughout, but helps establish a connection between our main theorems. Also, we assume $u_0\geq 0$ without loss of generality, since \eqref{modeleq} is invariant with respect to additive constants.  

Equation \eqref{modeleq} can be seen as a generalization of the so-called viscous Hamil\-ton-Jacobi equation,
\begin{equation}\label{vhj}
	u_t - \Delta u = |D u|^p 	\textrm{ in } \Omega\times(0,T).
\end{equation}
For $p=2$, this corresponds to the deterministic Kardar-Parisi-Zhang equation, proposed by these authors in \cite{kardar1986dynamic} as a model for the profile of a growing interface. Mathematically, it is of interest because it is the simplest model of a parabolic equation with nonlinear dependence on the gradient, as well as a viscosity approximation of a first-order Hamilton-Jacobi equation (see \cite{evans1998partial}, Ch. 10).

For equation \eqref{vhj}, including all values $p>0$, it is well-know that there exists a unique, maximal-in-time classical solution $u\in C^{1, \alpha}(\Omega \times [0,T^*])$ for some $\alpha>0$ and $0<T^*\leq\infty$, assuming sufficient regularity for $\Omega$ and for the initial and boundary data (\cite{friedman2013partial}, Ch. 7). 

In \cite{souplet2002gradient} the nonexistence of global, classical solutions of problem \eqref{vhj}-\eqref{boundarydata}-\eqref{initialdata} is proved when $p>2$ and $u_0\in C^1(\overline{\Omega})$ and is suitably large. It is also shown here that this implies the occurrence of \emph{gradient blow-up} (GBU, for short). GBU is said to occur in finite time $0<T<\infty$ if a solution $u$ satisfies
\begin{equation*}
	\sup_{[0,T]\times\Omega} u < \infty, \quad \lim_{t \rightarrow T} \sup_{x\in\Omega} |D u (x,t)| = \infty.
\end{equation*}

A version of equation \eqref{vhj} containing a more general gradient term with superquadratic growth is studied in \cite{alaa1996solutions} in the context of weak solutions, for irregular initial data. The notable result is the nonexistence of global-in-time weak solutions with initial data $u_0$ a positive, bounded measure and suitably large.

In both \cite{alaa1996solutions} and \cite{souplet2002gradient}, the largeness condition on $u_0$ is (roughly speaking) given in terms of an $L^2$-product of $u_0$ with the principal eigenfunction of the Laplacian. The condition that appears in our Theorem \ref{mainonball} is essentially the same as the one in \cite{souplet2002gradient}. An alternative proof of global nonexistence for \eqref{vhj} given in \cite{quittner2007superlinear}, Theorem 40.2, uses a weaker condition on $u_0$. In this proof it is enough to consider the $L^q$-norm for any $q\geq 1$, but the argument does not adapt to more general nonlinearities.

There are different extensions of the results of \cite{souplet2002gradient}. Still in the context of classical solutions, the existence of global solutions and their large-time (or asymptotic) behavior for equation \eqref{vhj} with nontrivial right-hand side is studied in \cite{souplet2006global}. For equation
\begin{equation}\label{vhjsource}
	u_t - \Delta u = |D u|^p + \lambda h(x) \quad\textrm{ in } \Omega\times(0,T),
\end{equation}
where $\lambda\geq 0$, $h\in C^1(\overline{\Omega})$, $h\geq 0$, a complete description of the asymptotic behavior is given when $u_0, h$ are radially symmetric and $\Omega=B_R(0)$, for some $R>0$: in this case, for $h\not\equiv 0$, there exists a $\lambda^*>0$ such that
\begin{itemize}
	\item if $0\leq \lambda < \lambda^*$, then \eqref{vhjsource} has a global solution which converges to the solution of the steady-state equation
	\begin{equation}\label{steadysource}
		- \Delta v = |D v|^p + \lambda h(x) \quad\textrm{ in } \Omega,
	\end{equation}
	which additionally satisfies $v\in C^1(\overline{\Omega})$.
	\item if $\lambda = \lambda^*$, then $u$ converges to a solution $v\not\in C^1(\overline{\Omega})$ for any $u_0\in C^1(\overline{\Omega})$ with $u_0\leq v$. This implies GBU in \emph{infinite time}, i.e.,
	\begin{equation*}
		\limsup_{t\rightarrow\infty} \|D u(\cdot,t)\|_\infty = \infty.
	\end{equation*}
	\item if $\lambda > \lambda^*$, then \eqref{steadysource} has no solution and GBU in finite time occurs for \emph{any} $u_0\in C^1(\overline{\Omega})$. 
\end{itemize}
In the case of a general, bounded domain $\Omega\subset\mathbb{R}^n$ only a partial description is available.

Some of these results have been extended to equations with degenerate diffusion (i.e., with $\Delta_p$ in place of $\Delta$) in \cite{attouchi2012well} in the context of weak solutions. Other questions, such as determining precise blow-up rates, profiles and sets are addressed in \cite{yuxiang2010single}, \cite{attouchi2012well}, \cite{porretta2016profile}. See also \cite{quittner2007superlinear}, Ch. IV, and the references therein.

Equation \eqref{vhj} has also been studied from the viewpoint of viscosity solutions, in which a generalized notion of boundary conditions exists. The relevant phenomenon in this context is known as \emph{loss of boundary conditions} (LOBC, for short). More precisely, \eqref{boundarydata} is said to hold in the viscosity sense for \eqref{modeleq} if
\begin{align}
	\min\left(u_t - \Mcalm(D^2 u) - |Du|^p, u\right) \leq{}& 0, \quad \textrm{and}\\
	\max\left(u_t - \Mcalm(D^2 u) - |Du|^p, u\right) \geq{}& 0,
\end{align}
while loss of boundary conditions are said to occur whenever \eqref{boundarydata} is not satisfied in the classical sense. A standard reference for the concept of viscosity solutions is \cite{crandall1992user}; in particular see \cite{crandall1992user}, Sec. 7, which covers generalized boundary conditions. Another helpful reference for this last topic can be found in \cite{barles1994solutions}, Chap. 4., where its relation to the underlying optimal control problem is covered.

In \cite{barles2004generalized} it is proved that the Cauchy-Dirichlet problem for a class of fully-nonlinear equations which includes \eqref{modeleq} admits a unique, globally defined, continuous viscosity solution, assuming boundary conditions are understood in the viscosity sense. The result follows from a strong comparison principle proved by these authors and a subsequent application of Perron's method. This result is relevant in the case $p>2$, since it is shown in this same work that for $p\leq2$ there is no LOBC for either sub- or supersolutions, hence the classical comparison result of \cite{crandall1992user} applies, and global existence of solutions satisfying Dirichlet boundary conditions in the classical sense follows. A one dimensional example of LOBC is also provided which, in contrast to those furnished by our result, satisfies time-dependent boundary data. As these results apply directly to the problem under our consideration, we review some of them in Section \ref{comparison} for convenience.

Building on the existence of global solutions of the generalized Dirichlet problem, a natural question is to determine their large-time behavior. In this direction again there is an important distinction between the sub- and superquadratic cases, which are studied rather thoroughly in \cite{barles2010large} and \cite{tabet2010large}, respectively. Consider equation
\begin{equation}\label{vhjsourceplus}
	u_t - \Delta u + |Du|^p = f(x) \quad\textrm{ in } \Omega\times(0,T),
\end{equation}
where 
\begin{equation*}
	u(x,t) = \varphi(x) \quad\textrm{on } \partial\Omega\times(0,T),
\end{equation*}
is satisfied \emph{in the viscosity sense}, $f\in C(\Omega)$, $\varphi\in C(\partial\Omega)$, and $\varphi(x)=u_0(x)$ for all $x\in\partial\Omega$. In the superquadratic case, $p>2$, there are two possibilities: if the corresponding steady-sate equation
\begin{equation}\label{steadysourceplus}
	- \Delta v + |Dv|^p = f(x) \quad\textrm{ in } \Omega
\end{equation}
has a bounded subsolution, then there exists a solution $u_\infty$ of \eqref{steadysourceplus} and $u(x,t)\rightarrow u_\infty$ on $\overline{\Omega}$. If \eqref{steadysourceplus} fails to have bounded subsolutions, one must introduce the so-called \emph{ergodic problem} with state-constraint boundary conditions:
\begin{align}\label{ergodic}
	& -\Delta v + |Dv|^p = f(x) + c \quad\textrm{ in } \Omega,\\
	& -\Delta v + |Dv|^p \geq  f(x) + c \quad\textrm{ in } \partial\Omega.
\end{align}
Here $c\in\mathbb{R}$ is the so-called \emph{ergodic constant}, and is an unknown in problem \eqref{ergodic} together with $v$. Existence and uniqueness of solutions $(c,v)$ of \eqref{ergodic} are studied in \cite{lasry1989nonlinear}: $c$ is unique while $v$ is unique up to an additive constant. Convergence of $u(x,t) + ct$ to $v$ where $(c,v)$ is a solution of \eqref{ergodic}, as well as LOBC is then analyzed.

The behavior in the subquadratic case is more complicated. It depends also on whether $1<p\leq\nicefrac{3}{2}$  or $\nicefrac{3}{2}<p\leq2$ and becomes necessary to introduce the following problem, also studied in \cite{lasry1989nonlinear}, as an analogue of \eqref{steadysourceplus} and \eqref{ergodic}:
\begin{align}\label{stateconstraints}
	-\Delta v + |Dv|^p = f(x) + c &\quad\textrm{in } \Omega,\\
	v(x)\rightarrow\infty &\quad\textrm{as } x\rightarrow\partial\Omega.
\end{align}
We refer the reader to \cite{barles2010large}.

A different type of result concerning large-time behavior is given in \cite{porretta2012null}. It is shown that there exist constants $K, \lambda$ and $C$ such that the solution of the generalized Dirichlet problem for \eqref{vhjsourceplus} with homogeneous boundary data and \emph{any} compatible initial data $u_0\in C(\overline{\Omega})$ satisfies, for every $t\geq K \|u_0\|_\infty$,
\begin{equation*}
	u(\cdot,t)\in W^{1,\infty}(\Omega), \quad\textrm{and}\quad \|u(\cdot,t)\|_\infty + \|D u(\cdot, t)\|_\infty \leq Ce^{-\lambda t}.
\end{equation*}
In particular, after some finite time, the solution $u$ satisfies the boundary data in the classical sense. This property is then applied to the  interesting problem of the null controllability of \eqref{vhjsourceplus}.

Regarding regularity of solutions, it is proved in \cite{capuzzo2010holder} that if $u$ is a bounded, upper-semicontinuous viscosity subsolution of the (possibly degenerate) elliptic equation
\begin{equation}\label{ellipticvhj}
	-\mathrm{tr}(A(x)D^2u) + \lambda u + |Du|^p = f(x) \quad\textrm{for all } x\in\Omega, 
\end{equation}
where $p>2$, $\Omega\subset \mathbb{R}^n$ is a regular domain, $\lambda>0$, and $A:\Omega\to S(n)$ and $f$ satisfy fairly standard assumptions, then $u$ is \emph{globally} Hölder continuous with exponent $\alpha = \nicefrac{p-2}{p-1}$ (i.e., $u\in C^{0,\nicefrac{p-2}{p-1}}(\overline{\Omega})$). As noted in \cite{barles2010short}, the result above is surprising, since most regularity results apply to actual \emph{solutions} of \emph{uniformly} elliptic equations that satisfy \emph{subquadratic} growth conditions, none of which points are met in the assumed hypotheses. The authors of \cite{capuzzo2010holder} go on to prove interior Lipschitz bounds for \emph{solutions} of \eqref{ellipticvhj} by the so-called \emph{weak-Bernstein} method introduced in \cite{barles1991weak}. These results are valid for fully-nonlinear equations satisfying hypotheses which are discussed in detail in \cite{barles2010short}.

In \cite{barles2010short} a slight simplification of the proof of Hölder regularity is provided, and the relation to the solvability of Dirichlet problem is analyzed. In short, if a general boundary condition $u=\varphi$ with $\varphi\in C(\partial\Omega)$ is assumed in the viscosity sense, an additional reason for the occurrence of LOBC is that $\varphi$ might not have the same regularity as $u$. This is, of course, irrelevant to the case of homogeneous boundary data.

Time-dependent versions of these regularity results are proven in \cite{armstrong2015viscosity}, though they require the additional assumption that
\begin{equation*}
	u_t \geq -C \quad \textrm{for all } (x,t) \in \Omega\times(0,T)
\end{equation*}
for some $C\geq0$ be satisfied \emph{in the viscosity sense}. This means that: for all $(x,t) \in  \Omega\times(0,T)$, if $(a,\xi,X)\in \mathcal{P}^{2,+} u(x,t)$, the parabolic superjet of a subsolution $u$ (see, e.g., \cite{crandall1992user} for definitions), then $a\geq -C$. To the best of our knowledge, there is no readily available result in the context of viscosity solutions that would allow us to do away with this assumption, which is why we have followed the strategy of regularizing the solution (see Sec. \ref{regularization}).\\[2pt]

\noindent\textit{Main results}\\

Our main results are the following. We begin by proving the existence of solutions of \eqref{modeleq}-\eqref{boundarydata}-\eqref{initialdata} that for a small time satisfy the boundary data in the classical sense. The existence time depends only on a gradient bound for the initial data, the remaining constants usually considered universal. 
\begin{theorem}\label{localexthm}
	Let $u_0 \in C^1(\overline{\Omega})$. There exists a $T^*>0$, depending only on $\Lambda, \lambda, n, \Omega$ and $\|u_0\|_{C^1(\overline{\Omega})}$, such that the viscosity solution of \eqref{modeleq} in $\Omega\times(0,T^*)$ satisfies \eqref{boundarydata} and \eqref{initialdata} in the classical sense.
\end{theorem}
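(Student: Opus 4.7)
The plan is to take the unique continuous viscosity solution $u$ of the generalized Dirichlet problem for \eqref{modeleq}--\eqref{boundarydata}--\eqref{initialdata} furnished by \cite{barles2004generalized} (and reviewed in Section~\ref{comparison}) and to squeeze it between two classical barriers which themselves vanish on $\partial\Omega \times [0,T^*]$ and attain $u_0$ at $t=0$. Comparison then transfers the classical boundary and initial values from the barriers to $u$. The lower barrier is $\underline u \equiv 0$, which is itself a classical solution of \eqref{modeleq} and satisfies $\underline u \leq u_0$; comparison with the constant $\|u_0\|_\infty$ further yields the a priori bound $u \leq \|u_0\|_\infty$ throughout $\overline\Omega \times [0,\infty)$.

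The core of the argument is the construction of the upper barrier. By the uniform interior/exterior sphere conditions on $\Omega$, the distance $d(x) = \operatorname{dist}(x,\partial\Omega)$ is of class $C^2$ in a tubular neighborhood $\mathcal N_{\rho_0} := \{0 \leq d(x) < \rho_0\}$, with $|Dd|=1$, $D^2d\cdot Dd = 0$, and $|D^2 d| \leq C(\Omega)$. The natural ansatz is
\begin{equation*}
    \overline u(x,t) = \alpha(t)\,\phi(d(x)),
\end{equation*}
with $\phi \in C^2((0,\rho_0])$ concave, $\phi(0)=0$, $\phi'(0^+) = +\infty$, and $\alpha(t) > 0$ to be chosen. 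The eigenvalue structure of $D^2\overline u = \alpha\phi''(d)\,Dd\otimes Dd + \alpha\phi'(d)\,D^2 d$ is one large negative eigenvalue $\alpha\phi''(d)$ in the normal direction plus tangential eigenvalues of size $O(\alpha\phi'(d)\,|D^2 d|)$, yielding $\Mcalm(D^2\overline u) \leq \Lambda\,\alpha\phi''(d) + C\,\alpha\phi'(d)$. The supersolution inequality then reduces, roughly, to
\begin{equation*}
    \alpha'(t)\phi(d) + \Lambda\,\alpha(-\phi''(d)) - C\alpha\phi'(d) \geq \alpha(t)^p\,\phi'(d)^p.
\end{equation*}
The choice $\phi(s) = s^a$ matches the scales $-\phi''(d) \sim d^{a-2}$ and $\phi'(d)^p \sim d^{(a-1)p}$ as $d \to 0^+$ precisely when $a = (p-2)/(p-1)$ --- the Hölder exponent of \cite{capuzzo2010holder}. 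Taking $a$ marginally above this critical value makes the inequality strict near $\partial\Omega$; the intermediate range $d \in [\delta, \rho_0]$ is then controlled by the time term $\alpha'(t)\phi(d)$, with $\alpha$ driven by a Bernoulli-type ODE of the form $\alpha' \sim K\alpha^p$ whose solution blows up in finite time $T^* \sim \alpha(0)^{-(p-1)}$.

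The coefficient $\alpha$ is chosen so that (i) $\overline u(\cdot,0)\geq u_0$ throughout $\mathcal N_{\rho_0}$ --- near $\partial\Omega$ this follows from $u_0(x)\leq\|Du_0\|_\infty\,d(x)$ together with $s^a \gg s$ as $s\to 0^+$, and globally it reduces to $\alpha(0)\rho_0^a \geq \|u_0\|_\infty$ --- and (ii) $\overline u\geq u$ at the inner face $\{d = \rho_0\}\times[0,T^*]$, which is the same inequality $\alpha(t)\rho_0^a \geq \|u_0\|_\infty$. Both constraints can be met with $T^*$ depending only on $\lambda,\Lambda,n,\Omega$ and $\|u_0\|_{C^1(\overline\Omega)}$. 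The classical-vs-viscosity comparison of \cite{barles2004generalized}, applied to the classical supersolution $\overline u$ and the viscosity subsolution $u$ (whose boundary condition is only known in the generalized sense), then gives $0\leq u\leq\overline u$ in $\mathcal N_{\rho_0}\times[0,T^*]$. Since $\overline u(x,t)\to 0$ as $d(x)\to 0$ uniformly in $t\in[0,T^*]$, the boundary condition \eqref{boundarydata} is attained in the classical sense; the classical initial condition \eqref{initialdata} follows from the continuity of $u$ up to $t=0$ (from Perron's construction) combined with the two-sided barrier.

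The principal obstacle is the barrier construction itself: the superquadratic gradient term forces $\phi$ to be sharply concave at the boundary, while matching the initial data pushes the amplitude $\alpha(0)$ upward in proportion to $\|u_0\|_{C^1}$, and these two requirements interact tightly through the exponent $a$. The critical value $(p-2)/(p-1)$ is precisely the threshold at which both sides of the supersolution inequality scale identically as $d\to 0^+$, so the proof hinges on shifting $a$ just above this value and absorbing the remaining defect into a carefully chosen time coefficient $\alpha(t)$, all while ensuring the resulting $T^*$ is expressible solely in terms of $\lambda,\Lambda,n,\Omega$, and $\|u_0\|_{C^1(\overline\Omega)}$.
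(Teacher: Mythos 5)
Your proposal takes a genuinely different route from the paper. The paper (Section~\ref{localex}, following \cite{attouchi2012well}) builds \emph{two} barriers near each boundary point $x_0$: a \emph{time-independent} supersolution $\bar v(x)=\varphi(s)$ with $\varphi(s)=s(s+\mu)^{-\beta}$ and $s=|x-x_1|-\rho$, whose slope at $s=0$ is the finite (but large) number $\mu^{-\beta}$; and a time-dependent control $\bar u(x,t)=At+C(1-e^{-\gamma s})$ which dominates $u$ on all of $\Gamma\times[0,\infty)$. It then shows $\bar u\leq\bar v$ on the parabolic boundary of $\Gamma_{\delta'}\times(0,T^*)$ for small $T^*$ and concludes $u\leq\bar v$. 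You instead propose a \emph{single} time-dependent barrier $\overline u=\alpha(t)d(x)^a$ with $a$ slightly above the Hölder exponent $(p-2)/(p-1)$ and $\alpha$ growing along a Bernoulli ODE. This is an interesting and, in principle, viable idea, but the execution as written has a real gap.

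The gap is in the claim that ``the inequality is strict near $\partial\Omega$; the intermediate range $d\in[\delta,\rho_0]$ is then controlled by the time term, with $\alpha'\sim K\alpha^p$.'' Write $\varepsilon=a-(p-2)/(p-1)>0$. The second-order term $\Lambda\alpha a(1-a)d^{a-2}$ dominates the gradient term $\alpha^p a^p d^{(a-1)p}$ precisely when
\begin{equation*}
d^{\varepsilon(p-1)}\ \lesssim\ \alpha^{-(p-1)},\qquad\text{i.e.}\qquad d\ \lesssim\ \alpha^{-1/\varepsilon},
\end{equation*}
so the region of ``strictness near $\partial\Omega$'' is not a fixed $(0,\delta)$ --- it shrinks to nothing as $\alpha\to\infty$ along the ODE trajectory. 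Beyond this threshold the time term must compensate, and computing the worst case over $d$ shows that the right-hand side of the resulting ODE for $\alpha$ has order $\alpha^{1+2/\varepsilon}$, not $\alpha^p$ (indeed $1+2/\varepsilon>p$ whenever $\varepsilon<2/(p-1)$, which is automatic since $a<1$ forces $\varepsilon<1/(p-1)$). With the Bernoulli ODE $\alpha'=K\alpha^p$ the supersolution inequality fails at $d\sim\alpha^{-1/\varepsilon}$ once $\alpha$ is large, so the barrier does not remain a supersolution up to the time $T^*\sim\alpha(0)^{-(p-1)}$ you claim. The argument can almost certainly be repaired --- either by taking $q=1+2/\varepsilon$ in the ODE (which still yields a finite blow-up time controlled by $\alpha(0)$, hence by $\|u_0\|_{C^1}$), or by stopping the time before $\alpha$ exceeds a fixed threshold so that a fixed $\delta$ may be used --- but the $\alpha'\sim K\alpha^p$ accounting does not close as stated. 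This is precisely the delicacy the paper's two-barrier design sidesteps: the supersolution property of $\bar v$ is verified once and for all with no time-dependence, and the growth of the solution is handled separately by the trivially-checked supersolution $\bar u$.

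Two smaller points. First, under the uniform sphere conditions assumed in the paper, $d(\cdot)=\operatorname{dist}(\cdot,\partial\Omega)$ is only $C^{1,1}$ in a tubular neighborhood, not $C^2$; your barrier is then only semiconcave and you must invoke the fact that a semiconcave a.e.\ supersolution is a viscosity supersolution, or, as the paper does, work with the smooth function $s=|x-x_1|-\rho$ for a single exterior tangent ball $B_\rho(x_1)$ and run the comparison locally near each $x_0\in\partial\Omega$. Second, the classical initial condition \eqref{initialdata} requires no separate argument: as noted in Remark~\ref{nobottomLOBC}, there is no loss of boundary data on $\overline\Omega\times\{t=0\}$, so you need not squeeze $u$ at $t=0$.
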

Since we already have the existence result of \cite{barles2004generalized}, we need only show that \eqref{boundarydata} is satisfied is the classical sense. For this we use a barrier argument, following the construction of comparison functions used in \cite{attouchi2012well} to show local existence under a slightly different strategy.

Next we prove the nonexistence of global solutions to the classical Dirichlet problem when $\Omega=B_1(0)$ and the initial data is radially symmetric, and suitably large. Again, thanks to the global existence result of \cite{barles2004generalized}, this implies the occurrence of LOBC.
\begin{theorem}\label{mainonball}
	Let $u_0\in C^1(\overline{B_1(0)})$ be a radial function. Then, there exist positive constants $\delta=\delta(\lambda, \Lambda, n)$ and $M=M(\lambda, \Lambda, n, p)$ such that, if
	\begin{equation}\label{intconditionu0}
		\int_\delta^{1-\delta} u_0(r) \,dr > M
	\end{equation}	
	then the solution $u$ of \eqref{modeleq}-\eqref{boundarydata}-\eqref{initialdata} with $\Omega = B_1(0)$ and initial data $u_0$ has LOBC at some finite time $T=T(u_0)$.
\end{theorem}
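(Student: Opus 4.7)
The plan is to adapt the integral-eigenfunction method of \cite{souplet2002gradient} to the fully nonlinear radial setting, deriving a blow-up ODE for a weighted integral of $U(r,t):=u(|x|,t)$ that contradicts global classical existence. Since $\Omega=B_1(0)$ and $u_0$ is radial, uniqueness of viscosity solutions forces $u$ to stay radial, so the eigenvalues of $D^2u$ are $U_{rr}$ and $U_r/r$ (the latter with multiplicity $n-1$), and $\Mcalm(D^2u)$ is an explicit piecewise-linear combination of these. Assume for contradiction that the solution remains classical on $[0,\infty)$, regularizing as alluded to in the introduction so that classical differentiation and integration by parts are legal; note also that the max-principle applied to the subsolution relation $u_t-\Mcalm(D^2u)\geq 0$ together with the constant supersolution $\|u_0\|_\infty$ gives $\|u(\cdot,t)\|_\infty\leq\|u_0\|_\infty$, so any quantity of the form $\int U\,\phi\,r^{n-1}dr$ remains bounded in time.

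Next, choose a smooth nonnegative test function $\phi=\phi(r)$ compactly supported in $[\delta,1-\delta]$, so that every boundary term at $r=\delta$ and $r=1-\delta$ vanishes; it is convenient to take $\phi$ as (the radial restriction of) the principal Dirichlet eigenfunction of a constant-coefficient linear operator that sandwiches $\Mcalm$, rather than of the Pucci operator itself, since $\Mcalm$ lacks a clean integration-by-parts identity. Setting
\[
y(t) := \int_\delta^{1-\delta} U(r,t)\,\phi(r)\,r^{n-1}\,dr,
\]
which is bounded below by a fixed multiple of the quantity in \eqref{intconditionu0} at $t=0$, differentiation against the equation yields
\[
y'(t) \;=\; \int_\delta^{1-\delta} \Mcalm(D^2u)\,\phi\,r^{n-1}\,dr \;+\; \int_\delta^{1-\delta} |U_r|^p\,\phi\,r^{n-1}\,dr.
\]
For the Pucci term, I would split the integration region according to the signs of $U_{rr}$ and $U_r$, in each piece replace $\Mcalm(D^2u)$ by the corresponding linear combination of $U_{rr}$ and $U_r/r$ with coefficients in $\{\lambda,\Lambda\}$, integrate by parts twice, and collect the pieces into a bound from below by $-C_1\,y(t)$ (the interface contributions having a controllable sign). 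For the gradient term, Jensen's inequality (convexity of $|\cdot|^p$) followed by Cauchy--Schwarz and integration by parts, exactly as in \cite{souplet2002gradient}, yields
\[
\int_\delta^{1-\delta} |U_r|^p\,\phi\,r^{n-1}\,dr \;\geq\; C_2\,y(t)^p,
\]
where $p>2$ is essential to keep the dual weight $\phi^{1-p'}$ integrable at the edges of $\mathrm{supp}\,\phi$. Combining these produces $y'(t)\geq -C_1\,y(t)+C_2\,y(t)^p$, whose solutions blow up in finite time whenever $y(0)>(C_1/C_2)^{1/(p-1)}$; choosing $M$ as a suitable multiple of $(C_1/C_2)^{1/(p-1)}$ makes \eqref{intconditionu0} precisely the largeness condition needed. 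Blow-up of the bounded quantity $y$ is the contradiction, so $u$ cannot remain classical on $[0,\infty)$; invoking global existence of the generalized viscosity solution from \cite{barles2004generalized} converts this into LOBC at some finite $T$.

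The main obstacle is producing the lower bound $\int_\delta^{1-\delta}\Mcalm(D^2u)\,\phi\,r^{n-1}\,dr\geq -C_1\,y(t)$. Unlike the Laplacian case, the nonlinearity of $\Mcalm$ forbids a single integration by parts that transfers derivatives onto $\phi$. The region-by-region linearization sketched above produces interface terms on the (a priori unknown) zero sets of $U_{rr}$ and $U_r$, and sign-dependent coefficients that one must track carefully; making sure these errors have the right sign, and that the final constants $C_1,C_2$ depend only on $\lambda,\Lambda,n,p$ (and on $\delta$, which in turn is a function of $\lambda,\Lambda,n$ chosen to give the linearization enough room), is the technical heart of the argument and presumably the reason for restricting to a ball with radial data and for excising a neighborhood of $\partial B_1$ in \eqref{intconditionu0}.
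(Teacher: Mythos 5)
Your overall blueprint --- test against an eigenfunction weight, derive $y'\geq -C_1 y + C_2 y^p$, contradict the uniform bound on $y$, and invoke the global viscosity solution of~\cite{barles2004generalized} to conclude LOBC --- is the paper's strategy, but the step you yourself flag as the ``technical heart'' is a genuine gap and is precisely the crux the paper spends most of its effort resolving. Region-by-region integration by parts according to the signs of $U_{rr}$ and $U_r$ produces interface terms on the a priori unknown sets $\{U_{rr}=0\}$ and $\{U_r=0\}$; at a zero $r_0$ of $U_{rr}$ where the second-order coefficient jumps between $\lambda$ and $\Lambda$, for instance, the resulting boundary contribution is proportional to $(\lambda-\Lambda)U_r(r_0)\phi(r_0)$, whose sign is governed by $U_r(r_0)$ and is therefore \emph{not} controllable. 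Worse, after the inf-sup regularization the function $w$ is only twice differentiable a.e., so these exceptional sets need not cut the interval into finitely many pieces, and the piecewise argument is not even well posed. There is also an internal inconsistency in your construction: a Dirichlet eigenfunction on $(\delta,1-\delta)$ vanishes at the endpoints but with nonzero normal derivative, so it is not compactly supported, and the boundary terms you hope to kill by compact support would in fact survive.

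The key device you are missing is the paper's divergence-form reformulation of the radial Pucci operator via a solution-dependent weight: setting $\hat n - 1 = \frac{\theta(w')}{\theta(w'')}(n-1)$, $\rho(r)=\exp\left(\int_{1-\epsilon}^{r}\frac{\hat n - 1}{s}\,ds\right)$, and $\tilde\rho=\rho/\theta(w'')$, one obtains the exact identity
\begin{equation*}
	\tilde\rho\left(\theta(w'')w'' + \tfrac{n-1}{r}\theta(w')w'\right) = (\rho w')',
\end{equation*}
with $\rho$ absolutely continuous, so integration by parts is legal with \emph{no} interface terms whatsoever. One then tests against the Pucci eigenfunction $\varphi_1^\epsilon$ itself (not a constant-coefficient surrogate), integrates by parts twice, checks that the boundary terms either vanish ($\varphi_1^\epsilon(\epsilon)=\varphi_1^\epsilon(1-\epsilon)=0$) or have the favorable sign ($(\varphi_1^\epsilon)'(\epsilon)>0$, $(\varphi_1^\epsilon)'(1-\epsilon)<0$, $w,\rho\geq 0$), and crucially exploits the \emph{minimality} of $\Mcalm$ --- $\frac{1}{\tilde\rho}(\rho\varphi')'\geq\Mcalm(D^2\varphi)$ for any radial $C^2$ $\varphi$ --- together with $\Mcalm(D^2\varphi_1^\epsilon)=-\lambda_1^\epsilon\varphi_1^\epsilon$ to obtain $I_1\geq-\lambda_1 z$. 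Closing the gradient-term estimate then still requires two further ingredients not present in \cite{souplet2002gradient}: a weighted one-dimensional Poincar\'e inequality for the weight $\tilde\rho$ (Lemma~\ref{weighedpoincare}), and the uniform-in-$\epsilon$ bound $\int_\epsilon^{1-\epsilon}(\varphi_1^\epsilon)^{-1/(p-1)}\tilde\rho\,dr<C$ (Lemma~\ref{eigenlemma}), the latter resting on a Hopf-type estimate for $(\varphi_1^\epsilon)'$ at the endpoints that is specific to the eigenfunction and would not be available for a generic compactly supported bump.
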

The proof of Theorem \ref{mainonball} uses key ideas from that of Theorem 2.1 in \cite{souplet2002gradient}. The main difficulty in adapting this proof is its crucial use of the divergence structure of the Laplacian by repeatedly using integration by parts. We remedy this problem by using the divergence form of the Pucci operator, available for radial solutions (see, e.g., \cite{felmer2004positive}), and the regularization by $\inf$-$\sup$-convolution introduced in \cite{lasry1986remark}. Combining these techniques we obtain an equation in divergence form which is satisfied point-wise and all of whose terms are integrable. Afterwards, the main complications are keeping track of the terms which depend on the regularization parameters and providing estimates which are independent of these. We also adapt a weighted, one-dimensional version of Poincaré's inequality, and make use of different results from \cite{esteban2005nonlinear} and \cite{esteban2010eigenvalues} regarding the principal eigenvalue problem for the Pucci operator.

This result is extended to show LOBC occurs for solutions of \eqref{modeleq} in a sufficiently regular bounded domain in Corollary \ref{gendomain}, and then to equations with more general nonlinearities, first in the radially symmetric case, then also for a bounded domain as above. The most general result is the following. Consider
\begin{equation}\label{uniparaboleq}
	u_t - F(D^2 u) = f(Du) \textrm{ in } \Omega \times (0,T),
\end{equation}
where $F:S(n) \rightarrow \mathbb{R}$ is uniformly elliptic, i.e., 
\begin{equation}\label{ourellipticity}
\Mcalm(X - Y) \leq F(X) - F(Y) \leq \Mcalp (X - Y) \quad \textrm{for all } X,Y \in S(n),
\end{equation}
and vanishes at zero, i.e., $F(0) = 0$, and $f:\mathbb{R}^n \rightarrow \mathbb{R}$ satisfies $f(\xi) \geq |\xi|^2 h(|\xi|)$ for all $\xi\in \mathbb{R}^n$, where $h:\mathbb{R}\rightarrow\mathbb{R}$ is positive, nondecreasing, grows more slowly than any positive power, and is such that $\xi \mapsto |\xi|^2 h(|\xi|)$ is convex. Precise hypotheses on $h$ are given in Section \ref{extensions}.
\begin{theorem}\label{uniparabolthm}
	Assume that $F$, $f$, and $h$ are as described above. If additionally $h$ satisfies
	\begin{equation}\label{growthh}
		\int_1^\infty \frac{1}{s h(s)} \,ds < \infty,
	\end{equation}
	then there exists $u_0 \in C^1(\overline{\Omega}),$ with $u_0\geq 0$ and $u_0|_{\partial \Omega} = 0$, such that LOBC occurs for solutions of \eqref{uniparaboleq}-\eqref{boundarydata}-\eqref{initialdata} in some finite time $T=T(u_0)$.
\end{theorem}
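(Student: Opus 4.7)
The plan is to reduce to the radial case on a ball, proving the analogue of Theorem \ref{mainonball} for general $F$ and $f$, and then to extend to arbitrary admissible $\Omega$ by the same scheme that takes Theorem \ref{mainonball} to Corollary \ref{gendomain}. The main technical content lies in the radial step, which I outline below.

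Take $\Omega = B_1(0)$ and pick a radial $u_0 \in C^1(\overline{B_1})$ with $u_0(1)=0$, $u_0 \geq 0$, and $u_0$ suitably large in an integral sense analogous to \eqref{intconditionu0}. Argue by contradiction: suppose a classical solution $u$ of \eqref{uniparaboleq}--\eqref{boundarydata}--\eqref{initialdata} exists globally. By radial symmetry and uniqueness, $u(\cdot,t)$ is radial. From \eqref{ourellipticity} with $F(0)=0$ one has $F(X) \geq \Mcalm(X)$, which combined with $f(\xi) \geq |\xi|^2 h(|\xi|)$ yields the classical supersolution inequality
\begin{equation*}
 u_t - \Mcalm(D^2 u) \geq |Du|^2 h(|Du|) \qquad \textrm{in } B_1 \times (0,\infty).
\end{equation*}
Now follow the strategy of Theorem \ref{mainonball}: regularize $u$ by $\inf$-$\sup$-convolutions \cite{lasry1986remark}; rewrite $\Mcalm$ in the radial divergence form of \cite{felmer2004positive}; multiply by the radial principal eigenfunction $\phi$ of $\Mcalm$ on $B_1$ (eigenvalue $\mu$, cf. \cite{esteban2005nonlinear,esteban2010eigenvalues}) together with an appropriate power of $r$; integrate over $[\delta, 1-\delta]$; control the boundary contributions at $r = \delta$ and $r = 1-\delta$; and pass to the limit in the regularization parameters. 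The result is a differential inequality for $F(t) := \int_\delta^{1-\delta} u(r,t)\phi(r) r^{n-1}\, dr$ of the form
\begin{equation*}
 F'(t) + \mu F(t) \geq c_1 \int_\delta^{1-\delta} |u_r|^2 h(|u_r|)\, \phi(r)\, r^{n-1}\, dr - E(t),
\end{equation*}
with a uniformly controlled error $E(t)$, parallel to the one appearing in the proof of Theorem \ref{mainonball}.

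To close the argument I combine the assumed convexity of $\psi(\xi) = |\xi|^2 h(|\xi|)$ with Jensen's inequality on the weighted probability measure $\phi(r) r^{n-1}\, dr / \|\phi r^{n-1}\|_1$ and the weighted Poincaré/Hardy inequality adapted from Theorem \ref{mainonball} to obtain the key bound
\begin{equation*}
 \int_\delta^{1-\delta} |u_r|^2 h(|u_r|)\, \phi r^{n-1}\, dr \geq c_2\, F(t)\, h(c_3 F(t)).
\end{equation*}
For $F(0)$ large the linear and error terms are absorbed, yielding $F'(t) \geq c_4 F(t) h(c_5 F(t))$. Condition \eqref{growthh} is exactly $\int^\infty dy/(y h(y)) < \infty$ up to the constant $c_5$, so $F$ blows up at some finite time $T = T(u_0)$, contradicting the assumption of global classical existence. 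The extension from $B_1$ to a general admissible $\Omega$ then proceeds verbatim as in Corollary \ref{gendomain}.

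The hard part is the Jensen-type lower bound $\int |u_r|^2 h(|u_r|)\phi r^{n-1}\, dr \geq c_2 F\, h(c_3 F)$: for general nondecreasing $h$ one cannot exploit the polynomial homogeneity used when $h(s) = s^{p-2}$, so the monotonicity of $h$ and the weighted Poincaré step must be interleaved carefully to produce the scaling $F\, h(cF)$ and not, say, $F^2 h(cF)$ (which would give unconditional blow-up even when $h$ is bounded, contradicting global existence in the model problem at $p = 2$). A secondary technical issue is that the $\inf$-$\sup$-convolution regularization must preserve the supersolution property for the general equation; this uses the concavity of $\Mcalm$ together with the convexity of $\psi$.
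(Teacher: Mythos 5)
Your plan diverges from the paper's proof in two important places, and each divergence hides a genuine gap.

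First, the heart of your argument is the Jensen-type lower bound
\begin{equation*}
\int_\delta^{1-\delta} |u_r|^2 h(|u_r|)\,\phi\, r^{n-1}\,dr \;\geq\; c_2\, F(t)\, h(c_3 F(t)),
\end{equation*}
which you explicitly flag as ``the hard part'' but do not establish. It is not a small step. The natural Jensen argument with respect to the probability measure $\phi r^{n-1}\,dr/\|\phi r^{n-1}\|_1$, combined with a Poincar\'e inequality to bound $\int|u_r|\,d\mu$ from below by $cF$, gives $c F^2 h(cF)$ rather than $c F h(cF)$; you notice this yourself and note that $F^2 h(cF)$ would be too strong (it would give blow-up for bounded $h$, i.e., at $p=2$). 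But what actually goes wrong is not the exponent -- it is the weight mismatch. The quantity $F(t)$ integrates against the eigenfunction $\phi$, which vanishes linearly at the boundary, while the Poincar\'e inequality (Lemma \ref{weighedpoincare}) is for the weight $\rho$ (essentially $r^{n-1}$), which does not. Bridging these two weights is exactly what the H\"older step in the proof of Theorem \ref{mainonball} does, and it is exactly there that $p>2$ enters: one needs $\int \varphi_1^{-\nicefrac{1}{p-1}}\tilde\rho\,dr<\infty$, which requires the exponent $\nicefrac{1}{p-1}$ to be strictly less than $1$ (Lemma \ref{eigenlemma}). Jensen's inequality erases this structure and gives no handle on the eigenfunction's boundary degeneracy, so as stated the bound is unsupported. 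The paper's Lemma \ref{gengradtermlemma} instead replaces H\"older by a \emph{Fenchel} inequality built from the convex conjugate $g^*$ of $g(s)=s^2h(s)$ and the auxiliary function $a(s)=\sup_{y>0} g^{-1}(ys)/g^{-1}(y)$. This yields $L\int|w'|\tilde\rho\,dr \leq \int g(|w'|)\varphi_1\tilde\rho\,dr + \int g^*(La(\nicefrac{1}{\varphi_1}))\tilde\rho\,dr$, and it is the second integral -- not an Osgood ODE -- that uses the growth condition \eqref{growthh}, via condition \eqref{growthg*} and Lemma 5.3 of \cite{souplet2002gradient}, to remain finite and $\epsilon$-uniform. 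The resulting ODE is only \emph{linear}: $\dot z(t) \geq z(t) - C$. The contradiction is not finite-time blow-up of $z$; it is that $z$ is a priori uniformly bounded, so integrating the linear inequality over $[t_0,t_1]$ with $t_1 - t_0$ large forces $z(t_0)$ to be small, contradicting the largeness condition \eqref{intconditiongengrad} on $u_0$. Your Osgood strategy (``$F'\geq c_4 F h(c_5 F)$, invoke $\int ds/(sh(s))<\infty$'') is a plausible heuristic, but the key inequality underlying it is missing.

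Second, your extension from $B_1(0)$ to a general domain is claimed to proceed ``verbatim as in Corollary \ref{gendomain},'' but that corollary's rescaling $\tilde v(x,t)=\eta^k v(\eta^{-1}|x-x_1|,\eta^{-2}t)$ relies on the exact homogeneity of the power nonlinearity $|Du|^p$ and does not produce a solution of the equation for general $g(|Du|)$. The paper explicitly remarks on this (end of Section \ref{extensions}) and instead works on an inscribed tangent ball of fixed radius, extending the initial data by zero and comparing, without rescaling. This is a minor fix but must be addressed.

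Finally, a small but recurring inaccuracy: you argue by contradiction assuming a ``classical solution exists globally.'' The global object is the viscosity solution of the generalized Dirichlet problem from \cite{barles2004generalized}; the contradiction is that this viscosity solution cannot satisfy \eqref{boundarydata} in the classical (pointwise) sense for all time. You still need the regularization machinery of Section \ref{regularization} to produce the pointwise-a.e.\ divergence-form inequality, so ``classical solution'' should be replaced by ``viscosity solution satisfying the boundary data classically.''
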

This result follows more or less easily from Theorem \ref{mainonball} and the main ideas used in its proof, as do the other extensions given in the final section.

The organization of the article is as follows. In Section \ref{comparison} we briefly review the results of \cite{barles2004generalized} which are directly used in our work. Section \ref{localex} is devoted to the proof of Theorem \ref{localexthm}. In Section \ref{techsec} we gather the technical results which lead us to the approximate equation we use to prove the nonexistence result, as well as some fundamental facts and estimates related to the eigenvalue problem for the Pucci extremal operator in a radial case. The statements and remarks of this section contain key concepts and notation used in the proof of the Theorem \ref{mainonball} and its subsequent generalizations. Section \ref{nonexistence} contains the proof of our main result in the radially symmetric case, Theorem \ref{mainonball}, and its extension to a bounded domain. This is the core of our work. Finally, in Section \ref{extensions} we provide extensions to more general equations, including the proof of Theorem \ref{uniparabolthm}.

After completing a first version of this work, we learned that the occurrence of LOBC, together with other closely related results, had been obtained for the equation involving the Laplacian (i.e., for \eqref{vhj}) in \cite{porretta2017analysis}.

\section{Comparison, existence and uniqueness}\label{comparison}

Existence and uniqueness for the so-called generalized Dirichlet problem for 
\begin{equation} \label{geneqbarles}
	u_t + G(x, t, Du, D^2u) = 0 \quad\textrm{ in } \Omega\times(0,T),
\end{equation}
where the boundary condition
\begin{equation}\label{genboundarydata}
	u = g \quad\textrm{ on }\partial\Omega\times(0,T),
\end{equation}
$g\in C(\partial\Omega\times (0,T))$, is understood in the viscosity sense, is proven in Theorem 5.1 in \cite{barles2004generalized}. For convenience, in this section we quote the main results of this work, as well as a couple of remarks relevant to our purposes. Here $G$ is a continuous function that satisfies the degenerate ellipticity condition,
\begin{equation}\label{theirellipticity}
	G(x,t,\xi,X) \leq G(x,t,\xi,Y) \quad \textrm{if } X \geq Y,
\end{equation}
for all $x\in\overline{\Omega}$, $t\in[0,T]$, $\xi\in \mathbb{R}^n$ and $X,Y\in S(n)$, together with two key hypothesis, for which we must introduce additional notation. Note that condition \eqref{theirellipticity} uses the opposite sign convention than the one used in \eqref{ourellipticity}. See also the discussion at the beginning of Subsection \ref{comparisonsubsec}.

Let $h_1:[0,\infty) \rightarrow [0,\infty)$ be a continuous function. We say $h_1$ satisfies property (P) if the following hold:
\begin{enumerate}[(i)]
	\item\label{P1} $\int_1^\infty \frac{s}{h_1(s)} \,ds < \infty$,
	\item\label{P2} for any $C>0$, $s$ large enough and $L\geq1$, the map $L\mapsto h_1(Ls) - CL^2h_1(s)$ is increasing,
	\item\label{P3} for any $C, \tilde{C}>0$, there exists $\bar{s}>0, \bar{L}\geq 1$ such that
		\begin{equation}
			h_1(Ls) - CL^2h_1(s) \geq \tilde{C}Ls \quad \textrm{for } s\geq\bar{s}, L\geq \bar{L}.
		\end{equation}
\end{enumerate}

The key assumptions on $G$ as the following:
\begin{enumerate}[(H1)]
	\item\label{barlesH1} There exists constants $C_1, C_2>0$ and a continuous function $h_1$ satisfying property (P) such that, for all $x\in\overline{\Omega}$, $t\in[0,T]$, $\xi\in \mathbb{R}^n$ and $X\in S(n)$, we have
	\begin{equation}
		G(x,t,\xi,X) \geq -C_1 -C_2\|X\| + h_1(|\xi|).
	\end{equation}
	\item\label{barlesH2} For any $\epsilon>0$, there exists $0<\mu_\epsilon<1$ converging to $1$ as $\epsilon\rightarrow 0$ such that
		\begin{equation*}
			G(y, s, \xi_2, Y) - G(x,t, \mu_\epsilon^{-1}\xi_1, \mu_\epsilon^{-1}X) \leq o(1)
		\end{equation*}
		for all $x,y\in\overline{\Omega}, t,s,\in[0,T], \xi_1, \xi_2\in\mathbb{R}^n$ and for all $X,Y\in S(n)$ satisfying the following properties for some $K>0$ and a sufficiently small $\eta>0$:
		\begin{align*}
			&-\frac{K\eta}{\epsilon^2}I_{2n} \leq \left(\begin{array}{cc} X & 0 \\ 0 & -Y \end{array}\right) \leq \frac{o(1)}{\epsilon^2}\left(\begin{array}{cc} I_n & -I_n \\ -I_n & I_n \end{array}\right) + o(1)I_{2n},\label{matrixineq}\\
			&|\xi_1 - \xi_2|\leq K\epsilon\min\{|\xi_1|,|\xi_2|\},\\
			&|x-y|+|t-s|<\epsilon.
		\end{align*}
\end{enumerate}

The main result is the following:

\begin{theorem}[Strong Comparison Result]\label{SCR}
	Assume $u_0\in C(\overline{\Omega})$, and let $u$ and $v$ be respectively a bounded upper-semicontinuous (USC, for short) supersolution and a bounded lower-semicontinuous (LSC) supersolution of \eqref{geneqbarles}-\eqref{genboundarydata}-\eqref{initialdata}, where $G$ satisfies hypotheses \emph{(H1)} and \emph{(H2)}. Then $u\leq v$ in $\Omega\times [0,T]$. Moreover, if we define $\tilde{u}$ on $\overline{\Omega}\times[0,T]$ by setting
	\begin{equation}\label{redef}
		\tilde{u}(x,t) = \left\{ \begin{array}{cl}
			\displaystyle{\limsup_{\substack{(y,s)\rightarrow(x,t) \\ (y,s)\in\Omega\times (0,T)}}} u(y,s) & \textrm{ on }\partial\Omega \times (0,T],\\[18pt]
			u(x,t) & \textrm{ otherwise,}
		\end{array}\right.
	\end{equation}
	and similarly define $\tilde{v}$, then $\tilde{u}$ and $\tilde{v}$ are still respectively a bounded USC subsolution and a bounded LSC supersolution of  \eqref{geneqbarles}-\eqref{genboundarydata}-\eqref{initialdata} and $\tilde{u} \leq \tilde{v}$ in $\overline{\Omega}\times[0,T]$.
\end{theorem}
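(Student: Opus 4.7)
My plan is to follow the standard Crandall-Ishii-Lions doubling-of-variables technique, with the main novelty being how to handle the generalized boundary condition. The coercivity provided by (H1) together with the structural relation (H2) are the tools that replace the classical assumption $u \leq g$ (resp. $v \geq g$) pointwise at $\partial\Omega$.

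First I would argue by contradiction: assume $M := \sup_{\overline{\Omega}\times[0,T]} (u-v) > 0$. The initial condition rules out $t=0$, so the supremum is attained (up to an arbitrary $\eta$-perturbation) at some time $t^* > 0$. The main obstacle is that the extremum may lie on $\partial\Omega$, where only the generalized inequality is known, so I would first need to quantify that $u$ cannot be "too large" at the boundary; specifically, using (H1) and a barrier built from property (P)(i) (the Lasry-Lions-type integrability $\int s/h_1(s)\,ds<\infty$, which encodes superquadratic coercivity), I would show that any USC subsolution satisfies a one-sided bound near $\partial\Omega$ that is compatible with the boundary trace of $g$. This kind of barrier estimate is the heart of the method and is what replaces a classical boundary trace.

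Next I would perform the usual doubling: set
\[
\Phi_\epsilon(x,t,y,s) := u_\epsilon(x,t) - v(y,s) - \tfrac{|x-y|^2}{\epsilon^2} - \tfrac{(t-s)^2}{\epsilon^2} - \eta t,
\]
where $u_\epsilon := \mu_\epsilon u$ with $\mu_\epsilon \uparrow 1$ as in (H2); since $u \geq 0$ at the boundary (in its relaxed sense) the rescaled $u_\epsilon$ formally becomes a \emph{strict} subsolution whose boundary values are strictly dominated by those of $v$, modulo an $o(1)$ error that (H2) controls. Let $(x_\epsilon,t_\epsilon,y_\epsilon,s_\epsilon)$ be a maximizer. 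Standard estimates from the User's Guide give $|x_\epsilon-y_\epsilon|^2/\epsilon^2,\,|t_\epsilon-s_\epsilon|^2/\epsilon^2 \to 0$ and any limit point attains $M$. The parabolic Crandall-Ishii lemma then produces jets $(a_\epsilon,\xi_\epsilon,X_\epsilon)\in\overline{\mathcal{P}}^{2,+}u_\epsilon$ and $(b_\epsilon,\xi_\epsilon,Y_\epsilon)\in\overline{\mathcal{P}}^{2,-}v$ with $\xi_\epsilon = 2(x_\epsilon-y_\epsilon)/\epsilon^2$, $a_\epsilon - b_\epsilon = \eta$, and matrices $X_\epsilon,Y_\epsilon$ fitting exactly the estimate required by (H2).

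In the interior case, the two viscosity inequalities combine with (H2) to give $\eta \leq o(1)$ as $\epsilon \to 0$, a direct contradiction. In the boundary case, the generalized subsolution relation at $x_\epsilon$ gives the alternative $u_\epsilon(x_\epsilon,t_\epsilon) \leq \mu_\epsilon g(x_\epsilon,t_\epsilon)$ — which together with the analogous supersolution inequality for $v$ forces $\Phi_\epsilon(\mathrm{max}) \leq 0$ for $\epsilon$ small, contradicting $M > 0$ — or the equation inequality holds, in which case the same (H2) argument applies. The hardest step is precisely this boundary dichotomy: one must use property (P)(ii)-(iii) to ensure that the scaled function $u_\epsilon$ continues to enjoy the coercive lower bound of (H1) with a gap big enough to absorb the $o(1)$ error while still benefitting from (H2). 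Finally, the statement about $\tilde{u}$ and $\tilde{v}$ follows from the standard stability of semicontinuous envelopes under the generalized boundary conditions: $\tilde{u}$ remains a USC subsolution because taking a sup from the interior preserves the viscosity inequalities, and applying the comparison just proved to $\tilde{u},\tilde{v}$ yields the extension to $\overline{\Omega}\times[0,T]$.
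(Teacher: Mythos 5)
The paper does not prove Theorem~\ref{SCR}: Section~\ref{comparison} explicitly states that this result (and the surrounding machinery) is quoted from Theorem~5.1 of \cite{barles2004generalized} for the reader's convenience, so there is no internal proof to compare yours against. Your proposal is a reconstruction of the Barles--Da~Lio argument, and its broad outline --- doubling of variables, rescaling $u$ by $\mu_\epsilon$ (as dictated by (H2)), exploiting the coercivity in (H1) and property (P) to get barriers near $\partial\Omega$, and invoking the parabolic Crandall--Ishii lemma --- is indeed in the spirit of the source.

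That said, there is a genuine gap in the handling of the boundary case. You write that when the maximizer sits on $\partial\Omega$ the generalized subsolution relation gives ``$u_\epsilon \leq \mu_\epsilon g$ \dots which together with the analogous supersolution inequality for $v$ forces $\Phi_\epsilon(\max)\leq 0$.'' But the generalized \emph{super}solution condition on $v$ is a \emph{disjunction}: either $v\geq g$ or the equation inequality holds, and for the class of equations at hand it is precisely the supersolution that may fail to attain the boundary data (this is the whole content of LOBC; see Remark~\ref{nolobcforsubs}). So you cannot simply invoke ``the analogous inequality'' to force $\Phi_\epsilon\leq 0$. What the source actually does is asymmetric: it first proves, via the coercive barriers built from (H1)/(P), that any USC \emph{sub}solution attains $u\leq g$ on $\partial\Omega$ in the classical sense (Proposition~3.1 of \cite{barles2004generalized}); the subsequent doubling argument then only ever needs the generalized condition for $v$, and in that case the dichotomy resolves because the equation inequality at the boundary for $v$ can be paired with the interior inequality for $u$ via (H2). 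Your sketch gestures at the barrier step but then presents the boundary dichotomy as if it were symmetric in $u$ and $v$, which hides the one ingredient the hypotheses (H1)--(H2) are designed to deliver. (As a side remark, the theorem statement itself contains a typo carried over from the quotation: $u$ should of course be a USC \emph{sub}solution.) Finally, the passage about $\tilde{u},\tilde{v}$ is stated but not argued; showing that the interior $\limsup$/$\liminf$ envelopes remain generalized sub/supersolutions on $\overline{\Omega}\times[0,T]$ requires a short but nontrivial verification, not just an appeal to ``standard stability.''
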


As is standard, existence is proven by combining this result with Perron's method of sub- and supersolutions.

\begin{remark}\label{comparisonuptoboundary} 
	When Theorem \ref{SCR} is used to compare \emph{continuous} sub- and supersolutions, comparison holds up to the boundary without having to redefine the functions as in \eqref{redef}.
\end{remark}

\begin{remark}\label{signchange} The lower bound of \eqref{barlesH1} implies that the gradient nonlinearity has the opposite sign to that of \eqref{modeleq}. However, the results proved for
	\begin{equation*}
		u_t - \Mcalp(D^2u) + |Du|^p = 0 \quad \textrm{ in } \partial\Omega\times(0,T)
	\end{equation*}
are valid for \eqref{modeleq} provided we exchange the role of sub- and supersolutions. Indeed, $u$ is a subsolution of the above equation if and only if $-u$ is a supersolution of \eqref{modeleq}. This is already noted in Remark 3.2 of \cite{barles2004generalized}. We note also that in \cite{barles2004generalized} there is no requirement that the solution be nonnegative, as there is in the proofs of gradient blow-up given in \cite{souplet2002gradient}.
\end{remark}

We will verify that hypotheses (H\ref{barlesH1}) and (H\ref{barlesH2}) apply to the equations considered in this work (after the appropriate sign change) in Section \ref{extensions}.

\begin{remark}\label{nolobcforsubs}
	Following the exchange of sub- and supersolutions mentioned in the previous remark, it follows from Proposition 3.1 in \cite{barles2004generalized} that any supersolution $v$ of \eqref{modeleq} satisfies $v\geq0$ on $\partial\Omega\times (0,T)$ in the classical sense for any given $T>0$. Hence, if LOBC occurs, as we prove later, then the solution satisfying \eqref{boundarydata} in the generalized sense must become strictly positive at some point of the boundary.
\end{remark}

\begin{remark}\label{nobottomLOBC}
	As mentioned in the introduction, the initial condition \eqref{initialdata} is always meant in the classical sense. There is no loss of generality in this assumption. It is a consequence of Lemma 4.1 in \cite{lio2002comparison} that there is no LOBC on the bottom of the parabolic domain, $\overline{\Omega}\times\{t=0\}$.
\end{remark}

\begin{remark}\label{uniformboundu}
	An easy but important consequence of the comparison result is that solutions $u$ of \eqref{modeleq}-\eqref{initialdata} are uniformly bounded and nonnegative. Indeed, for $u_0\geq0$, $\underline{v}\equiv 0$ and $\bar{v}\equiv \sup_{\overline{\Omega}} u_0$ are respectively sub- and supersolutions, so by comparison we have
	\begin{equation}\label{boundu}
		0 \leq u(x,t) \leq \sup_{\overline{\Omega}} u_0 \quad\textrm{for all } x\in \overline{\Omega}, 0\leq t\leq T.
	\end{equation}
	In particular, $\|u\|_\infty \leq \|u_0\|_\infty$.
\end{remark}

\section{Existence of local solutions}\label{localex}

We follow the construction of the comparison functions used to prove local existence of solutions for a related problem in \cite{attouchi2012well}, accounting for the presence of the extremal operators and providing additional detail regarding the choice of constants. 

\textit{Proof of Theorem \ref{localexthm}:} \emph{Step 1: A time-independent barrier.} We define a time-independent comparison function in a neighborhood of a fixed $x_0\in\partial\Omega$, using the exterior sphere condition, and prove that it is a supersolution of \eqref{modeleq}. We will address the initial and boundary conditions in a later step.

From the exterior sphere condition there exists a ball of radius $\rho>0$ centered at $x_1\notin\overline{\Omega}$, tangent to $\partial \Omega$ at $x_0$. We will employ the radial variables ${r= |x - x_1|}$, where ${\rho<r<\rho+\eta}$ for some $\eta>0$, and ${s = |x - x_1| - \rho}$. In this and the following steps we will compare the solution $u$ to different functions in the set 
\begin{equation*}
	\Gamma = \{x\in\Omega\ | \ 0<s = |x-x_1| - \rho < \eta \}.
\end{equation*}

Let $\varphi(s) = s(s+\mu)^{-\beta}$ with $\mu, \beta>0$ to be chosen later, and define 
\begin{equation}
	\bar{v}(x) = \varphi(|x-x_1|-\rho) = \varphi(s). 
\end{equation}

For any $C^2$ radial function, say $\phi(x) = \phi(|x|)$, a standard computation of the eigenvalues of $D^2\phi$ at any point gives them explicitly as $\phi''$ and $\nicefrac{\phi'}{|x|}$ with multiplicities $1$ and $n-1$, respectively, where $'$ denotes the derivative in the radial direction. By the definition of the extremal operator, this gives
	
	\begin{equation}\label{radialpucci}
		\Mcalm(D^2\phi) = \min_{a,b\in\{\lambda, \Lambda\}}(a\phi'' + b\frac{n-1}{r}\phi').
	\end{equation}

Setting $\beta<1$, we compute
\begin{align*}
	&\varphi '(s) = [(1-\beta)s + \mu](s+\mu)^{-\beta-1} > 0,\\
	&\varphi ''(s) = -\beta[(1-\beta)s + 2\mu](s+\mu)^{-\beta-2} < 0.
\end{align*} 
Hence, the extremal operator takes the form
\begin{align*}
	\Mcalm(D^2\bar{v})(s) ={}& \Lambda\varphi''(s) + \lambda \left(\frac{n-1}{s + \rho}\right)\varphi'(s)\\
	={}& -\Lambda\beta[(1-\beta)s + 2\mu](s+\mu)^{-\beta-2}\\
	& + \lambda \left(\frac{n-1}{s+\rho}\right)[(1-\beta)s+\mu](s+\mu)^{-\beta-1}.
\end{align*}
The function $\bar{v}$ is a supersolution if
\begin{equation*}
	 -\Mcalm(D^2\bar{v}) \geq |\nabla\bar{v}|^p = |\varphi'|^p.
\end{equation*}
That is, from the previous computations, if
\begin{align*}
	\left(\Lambda\beta[(1-\beta)s + 2\mu] - \lambda \left(\frac{n-1}{s + \rho}\right)[(1-\beta)s + \mu](s+\mu)\right)(s+\mu)^{-\beta-2}\\ 
	\qquad \geq [(1-\beta)s + \mu]^p(s+\mu)^{-p(\beta+1)}.
\end{align*}
Here we have factored the leading term $(s+\mu)^{-\beta-2}$ in the left-hand side. We proceed to show that its coefficient $K$ is positive for the right choices of $\mu$ and $\beta$.

Setting $\eta = \mu$ and using only that $0<\beta<1$ and $0<s<\eta=\mu$, we have
\begin{equation*}
	K > 2\Lambda\beta\mu - \lambda\left(\frac{n-1}{s + \rho}\right)((1-\beta)\mu + \mu)2\mu.
\end{equation*}
Hence, to have $K>0$ it is sufficient that
\begin{equation}
	\mu < \frac{\beta}{2-\beta}\left(\frac{2\Lambda\rho}{\lambda(n-1)}\right).
\end{equation}
Next, we verify that
\begin{equation}\label{dom_pow}
	K (s+\mu)^{-\beta-2} \geq [(1-\beta)s + \mu]^p(s+\mu)^{-p(\beta+1)}.
\end{equation}
Again $0<\beta<1$ implies
\begin{equation*}
	[(1-\beta)s + \mu]^p \leq (s + \mu)^p,
\end{equation*}
then
\begin{align*}
	[(1-\beta)s + \mu]^p(s+\mu)^{-p(\beta+1)} \leq{}& (s+\mu)^p(s+\mu)^{-p-p\beta}\\
	={}& (s+\mu)^{-p\beta}.
\end{align*}

Hence \eqref{dom_pow} holds if $(s+\mu)^{-p\beta} \leq K(s+\mu)^{-\beta - 2}$, that is, if 
\begin{equation}
	K^{-1} \leq (s + \mu)^{\beta(p-1) - 2}.
\end{equation}

Setting $\beta<\frac{1}{2(p-1)}$ gives $\beta(p-1) - 2 < -\frac{3}{2}$, so that the term on the right is singular. This precise value of $\beta$ will be useful in a moment. Using once more that $0<s<\mu$, it is sufficient to have
\begin{equation}\label{delicate_mu}
	K^{-1} \leq (2\mu)^{\beta(p-1) - 2}.
\end{equation}

We recall that $K$, the coefficient defined above, also depends on $\mu$. However, from the above computations we have that for small $\mu$,
\begin{equation*}
	K\geq  2\Lambda\beta\mu - \lambda\left(\frac{n-1}{s + \rho}\right)((1-\beta)\mu + \mu)2\mu \geq C_1\mu - C_2\mu^2,
\end{equation*}
hence $K^{-1}= O(\mu^{-1})$ as $\mu\rightarrow 0$, whereas the previous choice for $\beta$ gives that the right-hand side of \eqref{delicate_mu} is $O(\mu^{-\frac{3}{2}})$. Therefore, choosing $\mu$ small enough gives all the desired inequalities.

\emph{Step 2: Time-dependent control.} We introduce a second comparison function which will help us relate the solution $u$ of \eqref{modeleq}-\eqref{boundarydata}-\eqref{initialdata} to the supersolution $\bar{v}$ constructed in the previous step. 

Let 
\begin{equation*}
	\bar{u}(x,t) = At + C(1- e^{-\gamma s}),
\end{equation*}
and write $\psi(s) = 1 - e^{-\gamma s}$. We will prove that for appropriate choices of the positive constants $A, C$ and $\gamma$, $\bar{u}$ satisfies
\begin{align} 
	\bar{u}_t - \Mcalm(D^2 \bar{u}) \geq |D\bar{u}|^p &\quad\textrm{ in } \Gamma \times (0,\infty),\label{barusuper}\\
	\bar{u} \geq u &\quad\textrm{on } (\partial\Gamma\cap\Omega) \times (0,\infty),\label{barusuperboundary}\\
	\bar{u} \geq u_0 &\quad\textrm{ on } \overline{\Gamma}\times\{t=0\}\label{barubottom},
\end{align}
where \eqref{barusuperboundary} and \eqref{barubottom} are meant in the classical sense. 

Denote by $\nu$ the exterior unit normal at $x_0\in\partial\Omega$. For $x= x_0 - s\nu,\ t=0$, this is
\begin{equation*}
	\bar{u}(x,0) = C(1- e^{-\gamma s})\geq u_0 (x).
\end{equation*}

We use that
\begin{equation}\label{psiprime}
	0 < \left|\frac{\partial \bar{u}}{\partial \nu}(x_0)\right| = C\psi'(0) = C\gamma < \infty,
\end{equation}
$\|Du_0\|_\infty < \infty$, $u_0(x_0) = \psi(0) = 0$, and that both $u_0$ and $\bar{u}$ are non-negative to choose $C>0$ large enough, so that for small $s$, say $0<s<\delta$, we have
\begin{equation*}
	u_0(x) = u_0(x_0 - s\nu) \leq C\psi(s).
\end{equation*}
In other words, we are comparing the first-order expansions in the direction $-\nu$. Then, for $\delta \leq s \leq \eta$, we may also take
\begin{equation}\label{CversusGamma}
	C \min \{1, \min_{\delta\leq s \leq \eta} \bar{u} (x_0 - s\nu,0) \}> \max_{\bar{\Omega \times [0,T]}} u_0,
\end{equation}
since the minimum above is strictly positive. We may repeat this reasoning in the other directions which sweep $\overline{\Gamma}$, by considering an extension by zero of $u_0$ to the corresponding section of the annular domain where $\psi$ is defined. The choice of $C$ remains bounded since $\overline{\Gamma}$ is compact. Furthermore, it can be chosen uniformly with respect to $x_0$ since $\overline{\Omega}$ is compact. Observe that this also ensures that $\bar{u}\geq u$ on the rest of $\partial_p (\Gamma \times (0,\infty))$.

To check \eqref{barusuper}, we compute
\begin{equation}\label{puccipsi}
	-\Mcalm(D^2 \bar{u}) = \left(\Lambda\gamma - \lambda\frac{n-1}{s + \rho}\right)C\gamma e^{-\gamma s},
\end{equation}
and observe that choosing $\gamma$ large enough gives $-\Mcalm(D^2\bar{u})\geq 0$. Then, $\bar{u}$ is a supersolution if we can get
\begin{equation*}
	\bar{u}_t \geq |D\bar{u}|^p.
\end{equation*}
This amounts to taking
\begin{equation*}
	A \geq \max_{0\leq s \leq \eta} (C\gamma e^{-\gamma s})^p.
\end{equation*}

We have therefore proved that $\bar{u}$ satisfies \eqref{barusuper}-\eqref{barusuperboundary}-\eqref{barubottom}. Also, by definition $\bar{u}\geq 0$ on $\{x_0\}=\partial\Gamma\cap\partial\Omega$. Hence, by comparison, it follows that $\bar{u}\geq u$ in all of $\Gamma \times [0,\infty)$.

\emph{Step 3. Relating the comparison functions for small time.} We claim that for some $T^*>0$,
\begin{equation*}
	\bar{u}(x,t) \leq \bar{v}(x) \quad\text{for all } x \in\Gamma \text{ and } 0\leq t \leq T^*.
\end{equation*}

The proof is similar to that of the previous step. We establish first the comparison for $t=0$, the bottom of the domain. Again we consider $x = x_0 - s\nu$. Recalling \eqref{psiprime}, we now seek
\begin{equation}\label{ubarvbargradients}
	\left| \frac{\partial \bar{v}}{\partial \nu}(x_0)\right| = \varphi '(0) \geq C\gamma = \left|\frac{\partial \bar{u}}{\partial \nu}(x_0, 0)\right|.
\end{equation}

From previous computations,
\begin{equation*}
	\varphi '(0) = \mu^{-\beta} \rightarrow +\infty \quad \text{as } \mu \rightarrow 0.
\end{equation*}

On the other hand, $C$ depends on $\mu$ through \eqref{CversusGamma}. Since $\psi=\psi(s)$ is increasing inward, the minimum in \eqref{CversusGamma} is achieved at $s=\delta$. Clearly we can take $\delta < \mu=\eta$, and so a simple computation shows
\begin{equation*}
	C = o(\mu^{-\beta}) \quad\text{as } \mu \rightarrow 0.
\end{equation*}

Therefore, taking $\mu = \eta$ small enough eventually yields \eqref{ubarvbargradients}. We remark that this choice, which amounts to shrinking the domain $\Gamma$, does not affect the choices made for other constants.

As before, looking at the first order expansion gives $\bar{u}(x,0) \leq \bar{v}(x)$ for all $x$ as above, near $x_0$, say with $s<\delta'$. Moreover, in this case it is easier to extend the inequality to the directions which sweep $\Gamma \times \{t=0\}$, since both functions are radial and defined on the same annular domain.

To obtain 
\begin{equation}\label{ubarvbarouterbottom}
	\bar{u}(x,0) \leq \bar{v}(x) \quad\text{on } \partial\Gamma,
\end{equation}
there is no choice like \eqref{CversusGamma} available. However, we may restrict the comparison to $\Gamma_{\delta'} \times \{t=0\}$, where $\Gamma_{\delta'} := \Gamma\cap\{0 <s=|x-x_1| - \rho < \delta'\}$, $\delta'>0$ as above, so that \eqref{ubarvbarouterbottom} holds by comparing the first-order expansions, and more importantly, holds strictly. That is, $\bar{u}(x,0) < \bar{v} (x)$. We may now take $T^*$ small enough so that, for all $0\leq t \leq T^*$ and all $x\in \Omega$ such that $s = |x-x_1| - \rho = \delta'$,
\begin{equation*}
	u(x,t) \leq \bar{u}(x,t) = At + \bar{u}(x,0) \leq \bar{v}(x).
\end{equation*}

Thus we have proven that $\bar{v}$ solves
\begin{equation*}\left\{\begin{array}{ll}
	\bar{v}_t - \Mcalm(D^2 \bar{v}) \geq |D\bar{v}|^p &\quad\textrm{in } \Gamma_{\delta'} \times (0,T^*),\\
	\bar{v} \geq u &\quad\textrm{on } (\partial\Gamma_{\delta'} \cap \overline{\Omega}) \times [0,T^*],\\
	\bar{v} \geq 0 &\quad\text{on } (\partial\Gamma_{\delta'} \cap \partial\Omega) \times [0,T^*],\\
	\bar{v} \geq u_0 &\quad\text{in }\overline{\Gamma}_{\delta'},\end{array}\right.
\end{equation*}

where the boundary conditions (inequalities) are satisfied pointwise. Hence, by the comparison principle of \cite{barles2004generalized} (see also Remark \ref{comparisonuptoboundary}), $u(x,t) \leq \bar{v}(x)$ in all of $ \overline{\Gamma}_{\delta'} \times [0,T^*]$. In particular, this implies $u(x_0,t) \leq \bar{v}(x_0) = 0$, hence $u(x_0,t) = 0$ for all $0\leq t \leq T^*$. As $x_0\in\partial \Omega$ was arbitrary, this implies that the solution $u$ satisfies the boundary conditions in the classical sense on $\partial \Omega \times [0,T^*]$.\hfill\qed

\begin{remark}\label{rmkValuePLocal}
	The preceding computations do not require that $p>2$; only $p>1$ is explicitly used in \eqref{dom_pow}. We remark, however, that only the superquadratic case is of interest, since by the results of \cite{barles2004generalized}, in the subquadratic case the globally defined solution of \eqref{modeleq}-\eqref{boundarydata}-\eqref{initialdata} satisfies the boundary data in the classical sense.
\end{remark}

\begin{remark}\label{intSphereLocal}
	The proof of Theorem \ref{localexthm} uses only the uniform exterior sphere condition. Both interior and exterior sphere conditions are assumed to establish a connection between Theorems \ref{localexthm} and \ref{uniparabolthm}. See also Remark \ref{rmkextSphereUni}.
\end{remark}

\section{Technical results for the proof of nonexistence}\label{techsec}
In this section we gather a series of technical results and fundamental facts related, on one hand, to the process by which we arrive at an approximate equation (actually, an inequality) with the required properties, and on the other, to the eigenvalue problem for the Pucci operator. Furthermore, we introduce key concepts and notation that will be used in Sec. \ref{nonexistence}.

\subsection{Radial form}\label{radialform}
\begin{lemma}\label{radiallemma}
	Let $u\in C(\overline{B_1(0)})$ be the viscosity solution of
	\begin{equation}\label{eqinball}
		\left\{\begin{array}{ll}
			u_t - \Mcalm(D^2 u) - |Du|^p = 0 &\textrm{in } B_1(0) \times (0,T),\\
			u = 0 &\textrm{on } \partial B_1(0) \times [0,T],\\
			u(\cdot, 0) = u_0 &\textrm{in } \overline{B_1(0)},
		\end{array}\right.
	\end{equation}
	where $u_0$ is a radial function. Then $u$ is radial as well, that is, $u(x,t) = U(|x|,t)$ for some $U:[0,1] \times [0,T]\to \mathbb{R}$, and $U$ solves	
	\begin{equation}\label{radialeq}
		\left\{\begin{array}{ll}
			U_t - \theta(U'')U'' - \frac{n-1}{r}\theta(U')U' - |U'|^p = 0 &\textrm{in } (0,1)\times(0,T),\\
			U = 0 &\textrm{on } \{r=1\} \times [0,T],\\
			U(\cdot, 0) = u_0 &\textrm{in } B_1(0),
		\end{array}\right.
	\end{equation}
	in the viscosity sense, where $'$ denotes the radial derivative and 	
	\begin{equation*}
		\theta(s) = \left\{ \begin{array}{rl}
		\lambda, & \textrm{if}\ s>0, \\
		\Lambda, & \textrm{if}\ s\leq 0.
		\end{array}\right.
	\end{equation*}
\end{lemma}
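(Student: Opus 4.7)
\textit{Proof proposal for Lemma \ref{radiallemma}:} The plan is to proceed in two stages: first establish that $u$ is radial, then derive the stated one-dimensional equation for its radial profile.

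For the first stage, I would invoke the rotational invariance of the problem together with uniqueness. The equation $u_t - \mathcal{M}^-(D^2u) - |Du|^p = 0$ is invariant under the transformation $u \mapsto u\circ R$ for any $R \in O(n)$, because $\mathcal{M}^-$ depends only on the spectrum of $D^2u$ (use the second, eigenvalue-based definition of $\mathcal{M}^-$) and because $|Du|^p$ is rotationally invariant. Since $u_0$ is radial and the boundary data is zero, for every $R \in O(n)$ the function $v(x,t) := u(Rx,t)$ is a viscosity solution of the same Cauchy-Dirichlet problem. By the strong comparison principle (Theorem \ref{SCR}, applicable after the sign change noted in Remark \ref{signchange}, with hypotheses \emph{(H1)}--\emph{(H2)} to be verified in Section \ref{extensions}), $v \equiv u$, so $u(x,t) = U(|x|,t)$ for some function $U$.

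For the second stage, I would compute the radial form of the operator for a smooth radial function $\phi(x,t) = \Phi(|x|,t)$. As already recalled in Section \ref{localex} around \eqref{radialpucci}, the eigenvalues of $D^2\phi$ at a point with $r=|x|>0$ are $\Phi''(r,t)$ (with multiplicity one) and $\Phi'(r,t)/r$ (with multiplicity $n-1$), so the eigenvalue formula for $\mathcal{M}^-$ gives
\begin{equation*}
\mathcal{M}^-(D^2\phi) = \theta(\Phi'')\Phi'' + \frac{n-1}{r}\theta(\Phi')\Phi',
\end{equation*}
with $\theta$ as defined in the statement; note that $s \mapsto \theta(s)s$ is continuous (equal to $\lambda s^+ - \Lambda s^-$), so the convention at $s=0$ is immaterial. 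I would then transfer the viscosity property from $u$ to $U$ by the standard radialization trick: given a $C^2$ test function $\Phi(r,t)$ touching $U$ from above at some $(r_0,t_0)$ with $r_0\in(0,1)$, the radial extension $\phi(x,t) := \Phi(|x|,t)$ touches $u$ from above at every point of the sphere $|x|=r_0$ at time $t_0$. Applying the viscosity subsolution inequality for $u$ at such a point, and substituting the formula above, yields
\begin{equation*}
\Phi_t - \theta(\Phi'')\Phi'' - \frac{n-1}{r}\theta(\Phi')\Phi' - |\Phi'|^p \leq 0
\end{equation*}
at $(r_0,t_0)$. The supersolution inequality is handled symmetrically. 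Since any $C^2$ test function for $U$ on $(0,1)\times(0,T)$ extends to a $C^2$ radial test function for $u$ on $(B_1(0)\setminus\{0\})\times(0,T)$, this proves $U$ is a viscosity solution of \eqref{radialeq} on $(0,1)\times(0,T)$.

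The main obstacle I anticipate is the viscosity-theoretic justification of using only \emph{radial} test functions for $u$: one must argue that if $U-\Phi$ attains a strict local max at $(r_0,t_0)$ with $r_0>0$, then the rotational symmetry of $u$ and the continuity of the operator in the gradient/Hessian arguments allow us to conclude from the subsolution inequality for $u$ against an arbitrary (not necessarily radial) test function, by averaging over rotations or, equivalently, by constructing the radial extension $\phi$ directly and observing that the set of contact points is the full sphere $\{|x|=r_0\}$, on which the operator takes the claimed radial form. Once this averaging/symmetrization is in place, the rest is direct bookkeeping with the eigenvalues of $D^2\phi$. The behavior at $r=0$ need not be addressed since the equation is only claimed on $(0,1)\times(0,T)$.
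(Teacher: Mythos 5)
Your proof is correct and follows essentially the same route as the paper: radial symmetry of $u$ via rotation invariance plus uniqueness (from the comparison principle of \cite{barles2004generalized}), followed by the radialization of test functions, where a $C^2$ test function $\Phi$ for $U$ at $(r_0,t_0)$ with $r_0>0$ extends to a radial $C^2$ test function $\phi(x,t)=\Phi(|x|,t)$ for $u$ at any point of the sphere $\{|x|=r_0\}$, and the eigenvalue formula \eqref{radialpucci} for $\mathcal{M}^-$ gives the radial operator. The ``main obstacle'' you flag at the end is not actually one: since $u$ is a viscosity subsolution against \emph{all} $C^2$ test functions, testing it against the specific radial extension $\phi$ is automatically legitimate, and no averaging over rotations is required.
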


\begin{remark}
	We note that, although the function $\theta$ above is discontinuous at $0$, equation \eqref{radialeq} depends continuously on the derivatives of $u$, since the function $s\mapsto \theta(s)s$ is continuous for all $s\in\mathbb{R}$.
\end{remark}

\textit{Proof:} The solution $u$ of \eqref{eqinball} is radial due to the uniqueness of solutions of the Cauchy-Dirichlet problem, the rotation-invariance of the equation and the fact that the initial data is radial. Hence, there exists a function $U:\mathbb{R}\rightarrow\mathbb{R}$ such that $u(x) = U(|x|)$ for all $x\in B_1(0)$. We will show that this function is a subsolution of \eqref{radialeq} by definition. 

Consider $\Phi((0,1)\times (0,T))\in C^2$ that touches $U$ from above at $(\hat{r},\hat{t})$, and  define $\phi(x,t) = \Phi(|x|,t)$. Then $\phi$ is $C^2$, radial and a valid test function for $u$ at any $(\hat{x},\hat{t})$ such that $|\hat{x}|=\hat{r}$, hence we can compute $\Mcalm(D^2\phi)$ as in \eqref{radialpucci}. Observing also that $|D\phi| = |\Phi '|$, we obtain exactly the equation in \eqref{radialeq}. The proof that $U$ is also a supersolution is analogous.\hfill\qed

\begin{remark}
	In what follows we will at times write simply $u(x)=u(r)$ for radial functions, as is standard. We avoided this notation in the last lemma for clarity.
\end{remark}

\subsection{Regularization}\label{regularization}

In this section is we apply the regularization procedure introduced in \cite{lasry1986remark} solution $u$ of \eqref{modeleq}. In this way we obtain an equation satisfied in the pointwise a.e.~sense, all of whose terms are integrable. Although the technique is applicable in greater generality, in practice we will only apply the regularization to solutions of \eqref{modeleq} when $\Omega = B_1(0)$ (see Remarks \ref{notationremark} and \ref{radialrmk}). For the sake of clarity, especially regarding notation, we recall some of the relevant definitions and properties, noting that we do not seek full generality in what follows.

\begin{definition}
	For $u\in C(\overline{\Omega} \times [0,T])$ and $\epsilon, \kappa > 0$, define
	\begin{equation}
	u_{\epsilon, \kappa}(x,t) = \inf_{(y,s)\in \Omega \times (0,T)} \left(u(y,s) + \frac{1}{2\epsilon} |x-y|^2 + \frac{1}{2\kappa}|t-s|^2\right),
	\end{equation}	
	\begin{equation}
	u^{\epsilon}(x,t) = \sup_{y\in \Omega} \left(u(y,t) - \frac{1}{2\epsilon} |x-y|^2\right).
	\end{equation}
\end{definition}

We may also define $u^{\epsilon, \kappa}$ and $u_{\epsilon}$ similarly. Note that we use just one index when the convolution is performed in the space variable only. In the following statement we collect a series of well-known facts regarding these operations which will be used shortly hereafter.
\begin{proposition}\label{supinfconvproperties}
	Assume $u\in C(\overline{\Omega} \times [0,T])$, and let $\epsilon, \kappa, \delta > 0$.
	\begin{enumerate}[(i)]
		\item Both operations preserve both pointwise upper and lower bounds, i.e., 
		\begin{align*}
		\inf u \leq u_{\epsilon, \kappa} \leq \sup u,\\
		\inf u \leq u^\epsilon \leq \sup u,
		\end{align*}
		where $\inf$ and $\sup$ are taken over $\Omega\times (0,T)$.
		\item\label{optimalpoints} Let $\epsilon^* = 2\sqrt{\epsilon\|u\|_\infty}$, $\kappa^* = 2\sqrt{\kappa\|u\|_\infty}$, $\Omega^{\epsilon^*} = \{ x \in \Omega\ | \ d(x,\partial \Omega)>\epsilon^*\}$. For all $(x,t)\in \Omega^{\epsilon^*} \times (\kappa^*, T-\kappa^*)$, there exist $(y,s)\in\Omega\times(0,T)$ such that
		\begin{equation*}
		u_{\epsilon, \kappa}(x,t) = u(y,s) + \frac{1}{2\epsilon} |x-y|^2 + \frac{1}{2\kappa}|t-s|^2.
		\end{equation*}
		In other words, the $\sup$ and $\inf$ in the definition of the convolutions are achieved, provided we are at a sufficient distance from the boundary.
		\item\label{supinflipschitz} Both $u_{\epsilon, \kappa}$ and $u^{\epsilon, \kappa}$ are Lipschitz continuous in $x$ with constant $\frac{K}{\sqrt{\epsilon}}$, where $K = 2\|u\|_\infty$. That is,
		\begin{equation*}
		\sup_{\substack{x,y\in\Omega\\t\in[0,T]}} \frac{|u(x,t) - u(y,t)|}{|x-y|} \leq \frac{K}{\sqrt{\epsilon}}.
		\end{equation*}
		Similarly, they are Lipschitz continuous in $t$ with constant $\frac{K}{\sqrt{\kappa}}$.
		
		\item $u^{\epsilon, \kappa}, u_{\epsilon, \kappa} \rightarrow u$ uniformly as $\epsilon, \kappa \rightarrow 0$, and similarly for $u^\epsilon$.
		
		\item\label{twicediff} $u^{\epsilon, \kappa}, u_{\epsilon, \kappa}$ are respectively semiconvex and semiconcave. In particular, they are twice differentiable a.e. That is, there are measurable functions $a:\Omega\times [0,T] \to \mathbb{R}$, ${q:\Omega\times [0,T] \to \mathbb{R}^n}$, ${M: \Omega\times [0,T] \to S(n)}$ such that
		\begin{align*}
		u^{\epsilon, \kappa}(y,s) ={}& u^{\epsilon, \kappa}(x,t) + a(x,t)(s-t) + \langle q(x,t),y-x \rangle\\
		&+ \langle M(x,t)(y-x),y-x \rangle + o(|y-x|^2 + |s-t|). 
		\end{align*}
		We will denote $a=(u^{\epsilon, \kappa})_t,\, q=Du^{\epsilon, \kappa},\, M= D^2 u^{\epsilon, \kappa}$ for simplicity. The same goes for $u_{\epsilon, \kappa}$.
		\item With the notation above,
		\begin{align*}
		D^2u_{\epsilon, \kappa} \leq \frac{1}{\epsilon}I \quad \textrm{and} \quad D^2u^{\epsilon, \kappa} \geq -\frac{1}{\epsilon}I  \quad a.e. \text{ in } \Omega\times [0,T].
		\end{align*}	
		\item\label{semigroupprop} $(u_{\epsilon, \kappa})_\delta = u_{\epsilon + \delta, \kappa}$.
		
		\item\label{doubleconvineq} $(u_{\epsilon + \delta, \kappa})^\delta \leq u_{\epsilon, \kappa}$.
	\end{enumerate}	
\end{proposition}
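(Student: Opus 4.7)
\emph{Proof proposal.} These are classical properties of inf- and sup-convolutions; my plan is to verify them in essentially the order listed, exploiting (ii) throughout to localize the optimizers. For (i), $(y,s)=(x,t)$ is a test point in the infimum defining $u_{\epsilon,\kappa}$, giving $u_{\epsilon,\kappa}(x,t) \leq u(x,t) \leq \sup u$, while nonnegativity of the quadratic penalty gives $u_{\epsilon,\kappa} \geq \inf u$; the argument for $u^\epsilon$ is symmetric. For (ii), the plan is to exploit $\|u\|_\infty < \infty$: any candidate $(y,s)$ for which the spatial or temporal penalty exceeds $2\|u\|_\infty$ produces a value in the infimum strictly worse than the trivial test value $u(x,t)$, so the infimum is effectively taken over a compact subset of $\Omega \times (0,T)$ provided $(x,t)\in \Omega^{\epsilon^*}\times (\kappa^*, T-\kappa^*)$; continuity of the integrand then yields attainment. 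Item (iii) follows by comparing $u_{\epsilon,\kappa}(x,t)$ and $u_{\epsilon,\kappa}(x',t)$ through the optimizer for $(x,t)$, expanding $|x'-y|^2 - |x-y|^2 = 2\langle x'-x, x-y\rangle + |x'-x|^2$ and bounding $|x-y| \leq \epsilon^*$ via (ii); (iv) follows by combining (ii) with the uniform continuity of $u$ on $\overline{\Omega}\times[0,T]$, since the convolutions differ from $u$ only by a quantity controlled by the modulus of continuity at distances $\epsilon^*, \kappa^*$.

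The semiconvexity/semiconcavity in (v) would be obtained from the algebraic rearrangement
\begin{equation*}
 u_{\epsilon,\kappa}(x,t) - \tfrac{|x|^2}{2\epsilon} - \tfrac{t^2}{2\kappa} = \inf_{(y,s)} \Bigl\{u(y,s) + \tfrac{|y|^2}{2\epsilon}+\tfrac{s^2}{2\kappa} - \tfrac{\langle x,y\rangle}{\epsilon} - \tfrac{ts}{\kappa}\Bigr\},
\end{equation*}
which exhibits the left-hand side as an infimum of affine functions of $(x,t)$, hence concave. Twice differentiability a.e.\ then follows from Alexandrov's theorem for semiconcave functions, and (vi) is just a restatement of semiconcavity in terms of the a.e.\ Hessian. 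The case of $u^{\epsilon,\kappa}$ is handled by the symmetric rearrangement with an overall sign flip.

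For the semigroup property (vii), I would write the double infimum
\begin{equation*}
 (u_{\epsilon,\kappa})_\delta(x,t) = \inf_{(z,s)}\Bigl\{ u(z,s) + \tfrac{|t-s|^2}{2\kappa} + \inf_y \Bigl[\tfrac{|y-z|^2}{2\epsilon} + \tfrac{|x-y|^2}{2\delta}\Bigr] \Bigr\}
\end{equation*}
and evaluate the inner quadratic minimization explicitly, obtaining $|x-z|^2/[2(\epsilon+\delta)]$ at $y=(\epsilon x+\delta z)/(\epsilon+\delta)$; this matches the definition of $u_{\epsilon+\delta,\kappa}$. For (viii), the plan is to combine (vii) with the pointwise inequality $(f_\delta)^\delta \leq f$ valid for any bounded $f$: using $z=x$ as a test point shows $f_\delta(y) \leq f(x) + |y-x|^2/(2\delta)$, which cancels the sup-convolution penalty when one takes the supremum in $y$. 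Applied to $f = u_{\epsilon,\kappa}(\cdot,t)$ and combined with (vii), this gives $(u_{\epsilon+\delta,\kappa})^\delta = ((u_{\epsilon,\kappa})_\delta)^\delta \leq u_{\epsilon,\kappa}$.

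The main obstacle I anticipate is purely bookkeeping: one must keep the spatial and temporal convolution parameters distinct, respecting that the single-index notation acts only in space while the double-index notation acts in both, and attention must be paid near the parabolic boundary, which is precisely why (ii) is restricted to $\Omega^{\epsilon^*}\times (\kappa^*, T-\kappa^*)$. Beyond that, all steps reduce to quadratic algebra and standard convex-analysis facts.
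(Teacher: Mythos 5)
Your proposal is essentially correct, but it takes a genuinely different route from the paper: the paper does \emph{not} supply a proof of this proposition at all, deferring to the literature in the remark that immediately follows it (the elementary items to an introductory reference, items (vii) and (viii) to Crandall--Ishii--Lions style sources, and item (v) to Rademacher/Alexandrov). You instead give explicit arguments. These are the standard arguments one would find in those sources, and they are correct in substance, so the comparison is really between a self-contained sketch and a citation.

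Two small technical points are worth flagging so the sketch would survive close scrutiny. In (iii), the comparison through the optimizer $(y,s)$ for $(x,t)$ and the expansion $|x'-y|^2 - |x-y|^2 = 2\langle x'-x, x-y\rangle + |x'-x|^2$ with $|x-y|\leq\epsilon^*$ give only the local estimate
\begin{equation*}
|u_{\epsilon,\kappa}(x',t) - u_{\epsilon,\kappa}(x,t)| \leq \frac{K}{\sqrt{\epsilon}}\,|x'-x| + \frac{1}{2\epsilon}|x'-x|^2,
\end{equation*}
so to obtain the stated Lipschitz constant one still needs to telescope along a chain of $N$ intermediate points and let $N\to\infty$ (or, equivalently, invoke the semiconcavity from (v) to bound the a.e.\ gradient by $K/\sqrt{\epsilon}$ and integrate). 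This is routine but not automatic from the one-step comparison, and since you present (iii) before (v), you should say which mechanism you intend. In (vii), the inner minimization $\inf_y\bigl[\,|y-z|^2/(2\epsilon) + |x-y|^2/(2\delta)\,\bigr]$ is carried out over $\Omega$, not $\mathbb{R}^n$, so the explicit minimizer $y = (\epsilon x + \delta z)/(\epsilon+\delta)$ is only valid if it lies in $\Omega$; the clean identity $(u_{\epsilon,\kappa})_\delta = u_{\epsilon+\delta,\kappa}$ holds either after extending $u$ by $+\infty$ outside $\Omega$ (as in Crandall--Kocan--\'Swi\k{e}ch) or after restricting to points sufficiently far from $\partial\Omega$ in the spirit of (ii); the paper ultimately uses the inequality only in the interior, so this causes no harm, but it should be stated.
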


\begin{remark}
	The easier proofs follow more or less directly from the definitions (see e.g., \cite{dragoniintroduction}), while \eqref{semigroupprop} and \eqref{doubleconvineq} may be found in \cite{crandall1996equivalence}. Property \eqref{twicediff} uses the well-known theorems of Rademacher and Alexandrov on the differentiability of Lipschitz and convex functions, respectively; see \cite{evans2015measure} and the Appendix of \cite{crandall1992user}.
\end{remark}

The time-independent version of the following result appears as Lemma 4.2 in \cite{silvestre2015viscosity}. We say that $F$ is proper if for all $(X,\xi)\in S(n)\times \mathbb{R}^n$, $r,s\in \mathbb{R}$, if $r\leq s$ then $F(X,\xi,r) \leq F(X,\xi,s)$.
\begin{lemma}\label{infconvpreserves}
	Let $u$ be a viscosity supersolution of $u_t + F(D^2u, Du, u) = 0$ in $\Omega \times (0,T)$, where $F$ is proper. Then, using the notation of Proposition \ref{supinfconvproperties}, $u_{\epsilon, \kappa}$ is a viscosity supersolution of $u_t + F(D^2u, Du, u) = 0$ in $\Omega^{\epsilon^*}\times (\kappa^*, T - \kappa^*)$.
\end{lemma}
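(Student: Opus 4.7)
The plan is to follow the standard argument showing that inf-convolution preserves the supersolution property for proper equations: translate any test function that touches $u_{\epsilon,\kappa}$ from below into a valid test function for $u$ at the point where the infimum is attained, then transfer the viscosity inequality back. Property \eqref{optimalpoints} of Proposition \ref{supinfconvproperties} is precisely what makes this work on the shrunken domain $\Omega^{\epsilon^*}\times(\kappa^*, T-\kappa^*)$: it guarantees that the infimum defining $u_{\epsilon,\kappa}(x_0,t_0)$ is achieved at some interior point $(y_0,s_0)\in\Omega\times(0,T)$ where the supersolution property of $u$ can be invoked.

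More concretely, I would fix $(x_0,t_0)\in\Omega^{\epsilon^*}\times(\kappa^*,T-\kappa^*)$ and a smooth test function $\varphi$ touching $u_{\epsilon,\kappa}$ from below at $(x_0,t_0)$. Using property \eqref{optimalpoints} I pick $(y_0,s_0)$ realizing the infimum, so that
\begin{equation*}
u_{\epsilon,\kappa}(x_0,t_0)=u(y_0,s_0)+\tfrac{1}{2\epsilon}|x_0-y_0|^2+\tfrac{1}{2\kappa}|t_0-s_0|^2.
\end{equation*}
Then I define the translated test function
\begin{equation*}
\psi(z,r):=\varphi\bigl(z+x_0-y_0,\,r+t_0-s_0\bigr)-\tfrac{1}{2\epsilon}|x_0-y_0|^2-\tfrac{1}{2\kappa}|t_0-s_0|^2.
\end{equation*}
From the definition of $u_{\epsilon,\kappa}$ applied at the shifted point $(z+x_0-y_0, r+t_0-s_0)$, one checks that $\psi\le u$ in a neighbourhood of $(y_0,s_0)$ with equality at $(y_0,s_0)$, i.e.\ $\psi$ touches $u$ from below there. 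Since $\psi_t(y_0,s_0)=\varphi_t(x_0,t_0)$, $D\psi(y_0,s_0)=D\varphi(x_0,t_0)$ and $D^2\psi(y_0,s_0)=D^2\varphi(x_0,t_0)$ (translation does not affect derivatives and the added constant drops out), the supersolution inequality for $u$ at $(y_0,s_0)$ yields
\begin{equation*}
\varphi_t(x_0,t_0)+F\bigl(D^2\varphi(x_0,t_0),D\varphi(x_0,t_0),u(y_0,s_0)\bigr)\geq 0.
\end{equation*}

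To finish, I need to replace $u(y_0,s_0)$ by $u_{\epsilon,\kappa}(x_0,t_0)=\varphi(x_0,t_0)$ in the zero-th order slot of $F$. This is where properness enters: since the quadratic penalties are nonnegative, $u(y_0,s_0)\le u_{\epsilon,\kappa}(x_0,t_0)=\varphi(x_0,t_0)$, and monotonicity of $F$ in its last argument then gives
\begin{equation*}
\varphi_t(x_0,t_0)+F\bigl(D^2\varphi(x_0,t_0),D\varphi(x_0,t_0),\varphi(x_0,t_0)\bigr)\geq 0,
\end{equation*}
which is the required supersolution inequality for $u_{\epsilon,\kappa}$.

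I do not anticipate any serious obstacle; the only care needed is bookkeeping the geometric constants $\epsilon^*,\kappa^*$ to make sure $(y_0,s_0)$ lies in the open domain where $u$'s supersolution property is assumed, and noting that properness is used in exactly one place, namely to raise $u(y_0,s_0)$ up to $\varphi(x_0,t_0)$ in the last slot of $F$. The argument is purely local and does not use continuity or ellipticity of $F$ beyond its being proper.
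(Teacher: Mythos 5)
Your proof is correct and follows essentially the same strategy as the paper: translate the test function to the point where the inf-convolution's infimum is attained, observe that derivatives transfer unchanged, and invoke properness to raise the zeroth-order argument from $u(y_0,s_0)$ (which is dominated by $\varphi(x_0,t_0)$ because the quadratic penalties are nonnegative) to $\varphi(x_0,t_0)$. The paper's write-up phrases the properness step in terms of $\tilde{\varphi}(\hat{y},\hat{s}) \leq \varphi(\hat{x},\hat{t})$, which is the same inequality since $\tilde{\varphi}(\hat{y},\hat{s}) = u(\hat{y},\hat{s})$ at the touching point.
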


\textit{Proof:} Let $\varphi = \varphi(x,t)$ be a $C^2$ function that touches $u_{\epsilon, \kappa}$ from below at $(\hat{x}, \hat{t}) \in \Omega^{\epsilon^*}\times (\kappa^*, T - \kappa^*)$, that is, $\varphi(\hat{x}, \hat{t}) = u_{\epsilon, \kappa}(\hat{x}, \hat{t})$ and for $|x-\hat{x}| + |t-\hat{t}| < \delta$ and sufficiently small $\delta>0$,
\begin{equation}\label{xtClose}
\varphi(x,t) \leq  u_{\epsilon, \kappa} (x,t).
\end{equation}

By Proposition \ref{supinfconvproperties}, \eqref{optimalpoints} there exist $(\hat{y}, \hat{s}) \in \Omega \times (0,T)$ such that
\begin{equation*}
u_{\epsilon,\kappa} (\hat{x}, \hat{t}) = u(\hat{y}, \hat{s}) + \frac{1}{2\epsilon}|\hat{x} - \hat{y}|^2 + \frac{1}{2\kappa}|\hat{t} - \hat{s}|^2,
\end{equation*}
with $(\hat{y}, \hat{s})\to (\hat{x}, \hat{t})$ as $\epsilon,\kappa\to 0$. Hence, for sufficiently small $\epsilon,\kappa$, $(\hat{y}, \hat{s})$ remains close to $(x,t)$ as in \eqref{xtClose}. Therefore,
\begin{equation*}
\varphi(x,t) \leq u_{\epsilon, \kappa} (x,t) \leq u( x + (\hat{y} - \hat{x}), t + (\hat{s} - \hat{t})) + \frac{1}{2\epsilon}|\hat{x} - \hat{y}|^2 + \frac{1}{2\kappa}|\hat{t} - \hat{s}|^2.
\end{equation*}
Evaluating this expression now at $(x + (\hat{x} - \hat{y}), t + (\hat{t} - \hat{s}))$, we have that
\begin{equation*}
\tilde{\varphi} (x,t) := \varphi (x + (\hat{x} - \hat{y}), t + (\hat{t} - \hat{s}) ) - \frac{1}{2\epsilon}|\hat{x} - \hat{y}|^2 - \frac{1}{2\kappa}|\hat{t} - \hat{s}|^2 \leq u(x,t).
\end{equation*}

We also have, from the choice of $(\hat{y}, \hat{s})$, that $\tilde{\varphi} (\hat{y}, \hat{s}) = u (\hat{y}, \hat{s})$. Hence $\tilde{\varphi}$ is a valid test function for $u$. Since
\begin{equation*}
\begin{array}{l}
D^2\tilde{\varphi}(\hat{y}, \hat{s}) = D^2\varphi(\hat{x}, \hat{t}), \quad D\tilde{\varphi}(\hat{y}, \hat{s}) = D\varphi(\hat{x}, \hat{t}),\\
\tilde{\varphi}_t(\hat{y}, \hat{s}) = \varphi_t(\hat{x}, \hat{t}), \quad \tilde{\varphi}(\hat{y}, \hat{s}) \leq \varphi(\hat{x}, \hat{t}),		
\end{array}
\end{equation*}
by the properness of $F$,
\begin{equation*}
\begin{array}{l}
\varphi_t(\hat{x}, \hat{t}) + F(D^2\varphi(\hat{x}, \hat{t}), D\varphi(\hat{x}, \hat{t}), \varphi(\hat{x}, \hat{t})) \geq  \\ 
\quad \tilde{\varphi}_t(\hat{y}, \hat{s}) + F(D^2\tilde{\varphi}(\hat{y}, \hat{s}), D\tilde{\varphi}(\hat{y}, \hat{s}), \tilde{\varphi}(\hat{y}, \hat{s})) \geq 0.
\end{array}
\end{equation*}
Hence, $u_{\epsilon, \kappa}$ is a supersolution. That it is a subsolution is proved similarly.\hfill\qed

The following Proposition is an adaptation of Proposition 4.6 from \cite{crandall1996equivalence}, which is developed in a slightly different context.

\begin{proposition}\label{appeqlemma}
	Let $\Omega'\!\subset\subset\! \Omega,\ 0<t_0<t_1<T$ and $u$ be a bounded viscosity supersolution of \eqref{modeleq} in $\Omega \times (0,T)$. Then, there exist constants $\epsilon, \delta, \kappa > 0$ such that the regularized function $w = (u_{\epsilon + \delta, \kappa})^\delta$ satisfies
	\begin{equation}\label{appeq}
	w_t - \Mcalm(D^2w) \geq |Dw|^p \quad \text{a.e. in}\ \Omega' \times (t_0, t_1).
	\end{equation}	
	In particular, $w$ is a so-called $L^\infty$-strong supersolution of \eqref{appeq}.
\end{proposition}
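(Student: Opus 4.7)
The plan is to build a regularization that retains the viscosity supersolution property and is $C^{1,1}$ in space (and Lipschitz in time), so that the supersolution inequality descends to a pointwise a.e.\ statement. As a first step, Lemma \ref{infconvpreserves} gives that $v := u_{\epsilon+\delta,\kappa}$ is a viscosity supersolution of \eqref{modeleq} on the shrunken cylinder $\Omega^{(\epsilon+\delta)^*}\times(\kappa^*, T-\kappa^*)$. I will fix $\epsilon, \delta, \kappa$ small enough that $\Omega' \times (t_0, t_1)$ is compactly contained in this cylinder, with a further spatial buffer of $2\sqrt{\delta\|u\|_\infty}$ to accommodate the subsequent sup-convolution.

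Next, I will show that $w := v^\delta$ is itself a viscosity supersolution on a slightly further-shrunken region still containing $\Omega'\times(t_0,t_1)$. Suppose $\varphi\in C^2$ touches $w$ from below at $(\hat x, \hat t)$; by part \eqref{optimalpoints} of Proposition \ref{supinfconvproperties} (applied to $v^\delta$), the sup in $v^\delta(\hat x, \hat t)$ is attained at some $\hat y$ with $|\hat x-\hat y|\le 2\sqrt{\delta\|u\|_\infty}$. Setting $\hat z = \hat x - \hat y$, the function $\psi(y,t) := \varphi(y+\hat z,t) + \tfrac{1}{2\delta}|\hat z|^2$ satisfies $\psi \leq v$ near $(\hat y,\hat t)$ with equality at that point, and so is an admissible $C^2$ test function from below for $v$. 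Since both $\Mcalm$ and $|\cdot|^p$ are spatially invariant, the supersolution inequality for $v$ at $(\hat y,\hat t)$ read through $\psi$ is exactly the corresponding inequality for $\varphi$ at $(\hat x,\hat t)$, so $w$ is a viscosity supersolution there.

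Finally, I will exploit the regularity of $w$ to pass to a pointwise inequality. By property (vi) of Proposition \ref{supinfconvproperties}, $v$ is $1/(\epsilon+\delta)$-semiconcave in $x$. Since $1/(\epsilon+\delta) < 1/\delta$, the maximizer $\hat y(x,t)$ defining $w(x,t) = v^\delta(x,t)$ is unique and Lipschitz in $x$, which forces $w$ to be $C^{1,1}$ in space (with constant independent of $t$); and $w$ remains Lipschitz in $t$ by property \eqref{supinflipschitz}. A Rademacher/Alexandrov argument then places $(w_t, Dw, D^2w)(\hat x, \hat t) \in \mathcal{P}^{2,-}w(\hat x,\hat t)$ at a.e.\ $(\hat x,\hat t) \in \Omega'\times(t_0,t_1)$, whereupon Step 2 yields \eqref{appeq}. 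The main obstacle lies here: reconciling the space-$C^{1,1}$ and time-Lipschitz structure of $w$ into a bona fide parabolic second-order jet a.e., as in \cite{crandall1996equivalence}, while coordinating $\epsilon,\delta,\kappa$ so that all intermediate shrunken domains still contain $\Omega'\times(t_0,t_1)$.
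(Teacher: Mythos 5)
There is a genuine gap in your Step 2: the sup-convolution does \emph{not} preserve the supersolution property, and your test-function transfer has the inequalities pointing the wrong way. Write $v = u_{\epsilon+\delta,\kappa}$ and $w = v^{\delta}$, let $\varphi$ touch $w$ from below at $(\hat x,\hat t)$ with the sup attained at $\hat y$, and set $\hat z = \hat x - \hat y$, $\psi(y,t)=\varphi(y+\hat z,t)+\frac{1}{2\delta}|\hat z|^2$. From $\varphi\le w$ you get $\psi(y,t)\le w(y+\hat z,t)+\frac{1}{2\delta}|\hat z|^2$, but choosing $y'=y$ in the supremum defining $w(y+\hat z,t)$ gives $w(y+\hat z,t)\ge v(y,t)-\frac{1}{2\delta}|\hat z|^2$, i.e.\ $w(y+\hat z,t)+\frac{1}{2\delta}|\hat z|^2\ge v(y,t)$ --- an inequality in the wrong direction, so you cannot conclude $\psi\le v$ near $(\hat y,\hat t)$. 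The chaining only works when testing from \emph{above} through a sup-convolution (subsolutions) or from \emph{below} through an inf-convolution (supersolutions, which is exactly Lemma \ref{infconvpreserves}). Hence $w$ need not be a viscosity supersolution of \eqref{modeleq}, and the rest of your argument has no foundation at the points where this fails.

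The paper never claims $w$ is a viscosity supersolution. Instead it uses $w\le u_{\epsilon,\kappa}$ (Proposition \ref{supinfconvproperties}, \eqref{doubleconvineq}) and splits the a.e.\ points of twice-differentiability into two cases. Where $w(\hat x,\hat t)=u_{\epsilon,\kappa}(\hat x,\hat t)$, the second-order expansion of $w$ lies in $\mathcal{P}^{2,-}u_{\epsilon,\kappa}(\hat x,\hat t)$ and the viscosity inequality for $u_{\epsilon,\kappa}$ yields \eqref{appeq} --- this is the only case your scheme effectively covers. Where $w(\hat x,\hat t)<u_{\epsilon,\kappa}(\hat x,\hat t)$, Proposition 4.4 of \cite{crandall1996equivalence} forces $D^2w(\hat x,\hat t)$ to have an eigenvalue equal to $-\frac{1}{\delta}$, so that
\[
w_t-\Mcalm(D^2w)-|Dw|^p \ \ge\ -\frac{K}{\kappa^{1/2}}+\frac{\lambda}{\delta}-\frac{(n-1)\Lambda}{\epsilon}-\frac{K^p}{\epsilon^{p/2}},
\]
which is nonnegative once one couples the parameters by $\delta=o(\epsilon^{p/2})$ and takes $\epsilon$ small. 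This coupling is essential and entirely absent from your proposal; without it \eqref{appeq} can fail on the set where the sup-convolution genuinely lifts $w$ above $u_{\epsilon,\kappa}$. Your Step 1 and your Step 3 observations (that $w$ is both semiconvex and semiconcave in space, Lipschitz in time, hence admits a parabolic second-order expansion a.e.) are correct and coincide with the paper's setup; the missing idea is precisely this dichotomy and the choice $\delta=o(\epsilon^{p/2})$.
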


\textit{Proof:} We apply Lemma \ref{infconvpreserves} to \eqref{modeleq} to find that, for sufficiently small $\epsilon$ and $\kappa$, $u_{\epsilon, \kappa}$, depending on $\|u\|_\infty(\Omega\times(0,T))$ (see Proposition \ref{supinfconvproperties}, \eqref{optimalpoints}) is a viscosity supersolution of \eqref{modeleq} in $\Omega' \times (t_0, t_1)$. Observe that the Lemma applies since there is no $x$ dependence. The regularized function $w$ defined above is both semiconvex and semiconcave in $x$, as well as Lipschitz-continuous in $t$. Hence it is twice differentiable a.e.~in $\Omega' \times (\tau, T - \tau)$, in the sense of having a second order ``parabolic" Taylor expansion (as in Proposition \ref{supinfconvproperties}, \eqref{twicediff}).

Let $(\hat{x},\hat{t})$ be any such point of differentiability. As in Proposition \ref{supinfconvproperties}, \eqref{doubleconvineq}, we have that $w \leq u_{\epsilon, \kappa}$. Suppose that $w(\hat{x},\hat{t}) = u_{\epsilon, \kappa}(\hat{x},\hat{t})$. For $(x,t)$ in a neighborhood of $(\hat{x},\hat{t})$, we then have
\begin{align*}
u_{\epsilon, \kappa}(x,t) \geq w(x,t) ={}& w(\hat{x},\hat{t}) + w_t(\hat{x},\hat{t})(t-\hat{t}) + \langle Dw(\hat{x},\hat{t}), x - \hat{x}\rangle\\
&+ \langle D^2w(\hat{x},\hat{t}), x - \hat{x}\rangle  + o(|x-\hat{x}|^2 + |t - \hat{t}|)\\
={}&u_{\epsilon, \kappa} (\hat{x},\hat{t}) + w_t(\hat{x},\hat{t})(t-\hat{t}) + \langle Dw(\hat{x},\hat{t}), x - \hat{x}\rangle\\
&+ \langle D^2w(\hat{x},\hat{t}), x - \hat{x}\rangle  + o(|x-\hat{x}|^2 + |t - \hat{t}|),
\end{align*}
which implies that $(w_t(\hat{x}, \hat{t}), Dw(\hat{x}, \hat{t}), D^2w(\hat{x}, \hat{t})) \in \mathcal{P}^{2,-}u_{\epsilon, \kappa}(\hat{x}, \hat{t})$, the parabolic subjet at $(\hat{x}, \hat{t})$ (see, e.g., \cite{crandall1992user}). Since $u_{\epsilon, \kappa}$ is a viscosity supersolution, this gives
\begin{equation*}
w_t(\hat{x},\hat{t}) - \Mcalm(D^2w(\hat{x},\hat{t})) - |Dw(\hat{x},\hat{t})|^p \geq 0.
\end{equation*}

Assume now that $w(\hat{x},\hat{t}) < u_{\epsilon, \kappa}(\hat{x},\hat{t})$. In this case, by Proposition 4.4 in \cite{crandall1996equivalence}, $D^2w(\hat{x},\hat{t})$ has an eigenvalue equal to $-\frac{1}{\delta}$. On the other hand, by Proposition 4.5 in \cite{crandall1996equivalence}, $w$ is $\frac{1}{2\epsilon}$-semiconvex, so the remaining eigenvalues are bounded by above by $\frac{1}{\epsilon}$. Recalling also the gradient bounds which come from the Lipschitz continuity of $w$ with respect to both $x$ and $t$, as in Proposition \ref{supinfconvproperties}, \eqref{supinflipschitz}, we obtain
\begin{equation*}
w_t(\hat{x},\hat{t}) - \Mcalm(D^2w(\hat{x},\hat{t})) - |Dw(\hat{x},\hat{t})|^p \geq - \frac{K}{\kappa^\frac{1}{2}} + \lambda\frac{1}{\delta} - (n-1)\Lambda\frac{1}{\epsilon} - \frac{K^p}{\epsilon^\frac{p}{2}}.
\end{equation*}	
By taking $\delta = o(\epsilon^\frac{p}{2})$ and $\epsilon$ sufficiently small, the right-hand side of the above inequality becomes nonnegative. Hence, $w$ is a supersolution.\hfill\qed

\begin{remark}\label{notationremark} For the proof of Theorem \ref{mainonball} we apply Proposition \ref{appeqlemma} in the case where $\Omega=(0,1)$, and regularization is applied to $U=U(r)$, the radial part of the solution $u$ of \eqref{modeleq} in $B_1(0)\times[0,T]$. The spatial regularization will be performed with respect to the radial variable. To alleviate the notation of Section \ref{nonexistence}, we briefly switch to using $\tilde{\epsilon}$ and $\tilde{\delta}$ for the spatial regularization parameters. Precisely, this gives
	\begin{align*}
	& w(r,t) = (U_{\tilde{\epsilon} + \tilde{\delta}, \kappa})^{\tilde{\delta}} (r,t) \\
	& = \sup_{r_1\in(0,1)} \inf_{\substack{r_2\in(0,1)\\s\in(0,T)}} \left( U(r_2,s) + \frac{1}{2(\tilde{\epsilon} + \tilde{\delta})} |r_2-r_1|^2 + \frac{1}{2\kappa}|t-s|^2 - \frac{1}{2\tilde{\delta}}|r-r_1|^2 \right).
	\end{align*}
	Note also that from the proof of Proposition \ref{appeqlemma}, we choose $\tilde{\delta}=\tilde{\delta}({\tilde{\epsilon}})$, with $\tilde{\delta}\rightarrow0$ as $\tilde{\epsilon}\rightarrow 0$, so we need only choose suitable $\tilde{\epsilon}>0$ in the regularization.
	
	Since the viscosity solution of \eqref{modeleq} is uniformly bounded (see Remark \ref{uniformboundu}), Proposition \ref{appeqlemma} provides a supersolution to \eqref{modeleq} on a domain which arbitrarily approaches $(0,1)\times(0,T)$. That is, we can have $w$ satisfy \eqref{appeq} in $(\epsilon, 1 - \epsilon)\times(t_0,t_1)$ for arbitrarily small $\epsilon$ and $t_0$, and $t_1$ close to $T$, provided we choose small enough regularization parameters, depending on $\|u_0\|_\infty$. For this reason, in the proof of Theorem \ref{mainonball} given in the following section, we require certain estimates as $\epsilon,\ t_0\to 0$. This use of $\epsilon$ is maintained from the following subsection onwards, throughout Section \ref{nonexistence}, where additionally $\delta$ is used as a different cut-off parameter.
	
	The use of $\epsilon$ and $\delta$ as regularization parameters is only briefly revisited in Subsection \ref{subsLOBCextens}, where some comments are made regarding the adaptation of Proposition \ref{appeqlemma} to an equation with more general nonlinearities.
\end{remark}

\begin{remark}\label{radialrmk}
	We  obtain the inequality in divergence form as follows. By combining Proposition \ref{appeqlemma}, Lemma \ref{radiallemma}, and the considerations of Remark \ref{notationremark}, we have $w$ satisfies
	\begin{equation}\label{radialineq}
	w_t - \theta(w'')w'' - \frac{n-1}{r}\theta(w')w' - |w'|^p \geq 0 \quad\textrm{for a.e. } r\in(\epsilon,1-\epsilon), t\in (t_0,t_1),
	\end{equation}
	for arbitrarily small $\epsilon, t_0>0$ and $t_1$ arbitrarily close to $T$. Define
	\begin{equation*}
	\hat{n} = \frac{\theta(w')}{\theta(w'')} (n - 1) + 1,\quad \rho(r) = e^{\int_{1-\epsilon}^{r}\frac{\hat{n}-1}{s} \,ds},\quad\text{and} \quad\tilde{\rho}(r)= \frac{\rho(r)}{\theta(w'')}.
	\end{equation*}
	Note that $\rho$ is the indefinite integral of a measurable function, and is therefore absolutely continuous. In particular, this implies that it is differentiable a.e. 
	
	Multiplying \eqref{radialineq} by $\tilde{\rho}$, we obtain, for the second-order terms,
	\begin{equation*}
	\tilde{\rho}\left(\theta(w'')w'' + \frac{n-1}{r}\theta(w')w'\right) = (\rho w')'.
	\end{equation*}
	Hence,
	\begin{equation}\label{divformeq}
	\tilde{\rho} w_t \geq (\rho w')' + 	\tilde{\rho}|w'|^p \quad \textrm{for a.e. } r\in(\epsilon,1-\epsilon), t\in (t_0,t_1).
	\end{equation}
\end{remark}

\begin{remark}\label{underweightrmk}
	The functions $\rho$ and $\tilde{\rho}$ depend on the regularization parameters both explicitly and through the solution of the approximate equation $w$, but we omit these dependencies for simplicity of notation.
	
	We now provide a couple of bounds which will be useful later. As
	\begin{equation*}
		\hat{n} - 1 = \frac{\theta(w')}{\theta(w'')} (n - 1)  \leq \frac{\Lambda}{\lambda}(n-1),
	\end{equation*}
	we have for $\epsilon\in(0,\frac{1}{2})$ and all $r\in(\epsilon, 1 - \epsilon)$, that
	\begin{align}
		\tilde{\rho}(r) ={}& \frac{1}{\theta(w'')}e^{\int_{1-\epsilon}^{r}\frac{\hat{n}-1}{s} \,ds} \geq \frac{1}{\Lambda}\left(\frac{r}{1-\epsilon}\right)^{\frac{\Lambda}{\lambda}(n-1)} \nonumber\\
		\geq{}& \frac{1}{\Lambda}\left(\frac{r}{2}\right)^{\frac{\Lambda}{\lambda}(n-1)}:=\hat{\rho}(r).\label{underweight}
	\end{align}
	Note that $\hat{\rho}$ no longer depends on the regularization parameters and is defined for all ${r\in (0,1)}$. This is the function which appears in the statement of Theorem \ref{mainonball}. 
	
	On the other hand, since $\frac{r}{1-\epsilon}\leq 1$ for all $r\in(\epsilon, 1 - \epsilon)$, we similarly obtain
	\begin{equation}\label{overweight}
	\tilde{\rho}(r) \leq \frac{1}{\lambda}.
	\end{equation}
	We can also explicitly compute
	\begin{equation}\label{rhoendpoints}
		\rho(1-\epsilon) = 1\quad\text{and}\quad0\leq \rho(\epsilon) \leq \left(\frac{\epsilon}{1-\epsilon}\right)^\frac{\lambda(n-1)}{\Lambda} \rightarrow 0 \quad\textrm{as }\epsilon\rightarrow 0.
	\end{equation}
\end{remark}

\subsection{Eigenvalue problem for the Pucci extremal operator}

The proof of Theorem \ref{mainonball} involves the solution to the Dirichlet eigenvalue problem for the extremal operator $-\Mcalm$ in annular domains approximating the punctured ball $B_1(0)\backslash\{0\}$.

More precisely, let ${A_{\epsilon} = B_{1-\epsilon}(0)\backslash\overline{B_{\epsilon}(0)}}$, and consider
\begin{equation}\label{puccieigenvalue}
\left\{\begin{array}{ll}
-\Mcalm(D^2\varphi) = \lambda\varphi & \quad \textrm{in } A_{\epsilon},\\
\varphi = 0 & \quad \textrm{on } \partial A_{\epsilon}.
\end{array}\right.
\end{equation}
Here the boundary condition is satisfied in the classical sense. Note that $A_{\epsilon}$ corresponds to the spatial domain where \eqref{divformeq} is satisfied.

By Proposition 1.1 in \cite{esteban2005nonlinear}, there exists a solution pair $(\lambda_1^\epsilon, \varphi_1^\epsilon)$ of \eqref{puccieigenvalue} with ${\lambda_1^\epsilon > 0}$, ${\varphi_1^\epsilon\in C^2(A_{\epsilon}) \cap C(\overline{A_{\epsilon}})}$ and $\varphi_1^\epsilon > 0$ in $A_{\epsilon}$, where $\varphi_1^\epsilon$ is unique up to a positive constant. We normalize this solution so that $\varphi_1^\epsilon(\frac{1}{2}) = 1$, for reasons that will become apparent later. Our notation indicates that both $\lambda_1^\epsilon$ and $\varphi_1^\epsilon$ depend on the parameters of the spatial regularization through the domain $A_{\epsilon}$. 

We employ the following lemma to state our main theorem without reference to these regularization parameters.

\begin{lemma}\label{unilowerboundforphi}
	Let $K\subset(0,1)$ be a closed interval such that $[\nicefrac{1}{4}, \nicefrac{3}{4}]\subset K$. There exists a function $\hat{\varphi}\in C(K)$, such that $\hat{\varphi}(r)>0$ for all $r\in K$ and $\varphi_1^\epsilon \rightarrow \hat{\varphi}$ uniformly over $K$, up to a subsequence.
\end{lemma}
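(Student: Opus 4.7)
The plan is to treat each $\varphi_1^\epsilon$ as a radial positive solution of the fully nonlinear equation on $A_\epsilon\subset\mathbb{R}^n$, establish uniform interior bounds that do not depend on $\epsilon$, and extract a convergent subsequence via Arzel\`a--Ascoli.

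First, I would show that the family of eigenvalues $\{\lambda_1^\epsilon\}$ is uniformly bounded as $\epsilon\to 0$. Since the principal Dirichlet eigenvalue of $-\Mcalm$ is monotone non-increasing with respect to domain inclusion (proved for Pucci's operators in \cite{esteban2005nonlinear}) and since $A_{1/8}\subset A_\epsilon$ whenever $\epsilon<1/8$, one obtains
\begin{equation*}
 0<\lambda_1^\epsilon\leq \lambda_1(-\Mcalm,A_{1/8})=:\Lambda_0<\infty \quad\text{for all } \epsilon<\tfrac18.
\end{equation*}

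Next, fix a closed $K\subset (0,1)$ with $[\nicefrac14,\nicefrac34]\subset K$ and let $\eta=\tfrac12\min\{\inf K,\,1-\sup K\}>0$. For every $\epsilon<\eta$ the spherical shell $\tilde K:=\{x\in\mathbb{R}^n:|x|\in K\}$ together with a fixed open neighborhood is compactly contained in $A_\epsilon$. The function $\varphi_1^\epsilon\geq 0$ is a viscosity solution of
\begin{equation*}
 -\Mcalm(D^2\varphi_1^\epsilon) - \lambda_1^\epsilon\,\varphi_1^\epsilon = 0 \quad\text{in }A_\epsilon,
\end{equation*}
whose zeroth-order coefficient is bounded by $\Lambda_0$ uniformly in $\epsilon$. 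Harnack's inequality for positive viscosity solutions of fully nonlinear uniformly elliptic equations with bounded lower-order terms, applied along a finite chain of balls covering $\tilde K$ contained in the common neighborhood, then gives
\begin{equation*}
 \sup_{\tilde K}\varphi_1^\epsilon \leq C\,\inf_{\tilde K}\varphi_1^\epsilon,
\end{equation*}
with $C=C(n,\lambda,\Lambda,\Lambda_0,K)$ independent of $\epsilon$. Combined with the normalization $\varphi_1^\epsilon(\nicefrac12)=1$ (the sphere $\{|x|=\nicefrac12\}$ lies inside $\tilde K$), this yields uniform positive two-sided bounds $0<c_0\leq \varphi_1^\epsilon\leq C_0<\infty$ on $\tilde K$.

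Finally, the uniform $L^\infty$ bound on $\varphi_1^\epsilon$ together with the uniform bound $\|\lambda_1^\epsilon\varphi_1^\epsilon\|_{L^\infty(\tilde K)}\leq \Lambda_0 C_0$ on the right-hand side allows one to invoke interior $C^{1,\alpha}$ estimates for Pucci-type equations (Krylov--Safonov--Caffarelli theory) on any slightly smaller shell $\tilde K'\subset\subset\tilde K$. By Arzel\`a--Ascoli, a subsequence converges uniformly on $\tilde K'$, and hence on $K$, to a radial continuous function $\hat\varphi\in C(K)$, which inherits the positive lower bound $\hat\varphi\geq c_0>0$. The main delicate point is to verify that the Harnack and regularity constants are genuinely independent of $\epsilon$; this is handled by noting that $\{A_\epsilon\}$ exhausts the punctured unit ball, so a common open neighborhood of $\tilde K$ fits inside every $A_\epsilon$ with $\epsilon<\eta$, and both the ellipticity constants of $-\Mcalm$ and the bound on the zeroth-order term are $\epsilon$-independent.
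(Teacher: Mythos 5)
Your argument is correct, and its skeleton (monotonicity of the principal eigenvalue to bound $\lambda_1^\epsilon$, Harnack's inequality with the normalization $\varphi_1^\epsilon(\nicefrac12)=1$, then a compactness argument) matches the paper's. Where you genuinely diverge is in the two final steps. For equicontinuity, the paper works with the radial ODE $\theta[(\varphi_1^\epsilon)'']( \varphi_1^\epsilon)''+\theta[(\varphi_1^\epsilon)'](\varphi_1^\epsilon)'\frac{n-1}{r}=-\lambda_1^\epsilon\varphi_1^\epsilon$ and bounds $\|\varphi_1^\epsilon\|_{C^1}$ by a normalization/contradiction argument: it rescales by $\gamma_\epsilon=\|\varphi_1^\epsilon\|_{C^1(K')}$, passes to the limit, and shows that $\gamma_\epsilon\to\infty$ would force the limit to solve $\Mcalm(D^2\tilde\varphi)=0$, contradicting the limiting eigenvalue equation; you instead invoke interior $C^{1,\alpha}$ (Krylov--Safonov--Caffarelli) estimates with the uniformly bounded right-hand side $-\lambda_1^\epsilon\varphi_1^\epsilon$, which is a cleaner and entirely standard route to Arzel\`a--Ascoli. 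For positivity, the paper deduces $\hat\varphi>0$ from the strong maximum principle applied to the limit equation, whereas you extract the quantitative lower bound $\inf_{\tilde K}\varphi_1^\epsilon\geq C^{-1}\varphi_1^\epsilon(\nicefrac12)=C^{-1}$ directly from Harnack and the normalization, so the limit inherits $\hat\varphi\geq c_0>0$ without ever passing to the limit in the equation (which the lemma does not require). Your version buys a more elementary proof and, as a bonus, the uniform lower bound that the paper only records afterwards in Remark \ref{unilowerboundforphi-rmk}; the paper's version additionally identifies $\hat\varphi$ as a solution of the limiting eigenvalue equation. One cosmetic point: you derive the $C^{1,\alpha}$ bound on a shell $\tilde K'\subset\subset\tilde K$ but then claim uniform convergence on $K$ itself; just run the whole argument on a slightly larger closed interval $K''\supset K$ with $K\subset\mathrm{int}(K'')$ (as the paper does with its $K'$) so that the interior estimates cover $K$.
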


\begin{proof} 
	In general, if we denote by $\lambda_1(\Omega)$ the corresponding \emph{principal half-eigen\-value} (i.e., solution of \eqref{puccieigenvalue}) in $\Omega$, we have that $\lambda_1(\Omega') \leq \lambda_1(\Omega)$ if $\Omega \subset \Omega'$; see Proposition 1.1 (\textit{iii}) in \cite{esteban2005nonlinear}. We therefore have the monotonicity $\lambda_1^{\epsilon'} \leq \lambda_1^\epsilon$ if $\epsilon' \leq \epsilon$. For the same reason, $\lambda_1(B_1(0)) \leq \lambda_1^\epsilon$ for all $\epsilon>0$. Hence, $\lambda_1^\epsilon \rightarrow \hat{\lambda}$ as $\epsilon \rightarrow 0$, for some $\hat{\lambda} > 0$. Note also that $\lambda_1^\epsilon\leq \lambda((\nicefrac{1}{4}, \nicefrac{3}{4}))$ for all $\epsilon<\nicefrac{1}{4}$.
	
	Consider now a closed interval $K'\supset K$. By Harnack's inequality (see Theorem 3.6 in \cite{quaas2008principal}), for all $\epsilon > 0$ small enough such that $K'\subset (\epsilon, 1-\epsilon)$, we have
	\begin{equation}\label{upboundphi}
		\sup_{K'} \varphi_1^\epsilon \leq \sup_{(\epsilon, 1-\epsilon)} \varphi_1^\epsilon \leq C \inf_{(\epsilon, 1-\epsilon)} \varphi_1^\epsilon \leq C \inf_{K'} \varphi_1^\epsilon \leq C,
	\end{equation}
	where we used $\varphi_1^\epsilon(\frac{1}{2}) = 1$ and $\frac{1}{2} \in K'$ for the last inequality. The Harnack constant $C$ above depends only on $n, \lambda, \Lambda, \lambda_1^\epsilon$ and $\mathrm{dist}(K',\partial(0,1))$. Since $\lambda_1^\epsilon$ is uniformly bounded for $\epsilon<\nicefrac{1}{4}$, $C$ is independent of $\epsilon$ as well.
	
	It follows that the functions $\varphi_1^\epsilon$ are uniformly bounded, and therefore satisfy a family of ODEs with uniformly bounded right-hand sides. More precisely, using \eqref{radialpucci} once more, we have
	\begin{align}\label{eigenODEs}
		\Mcalm(D^2\varphi_1^\epsilon) = \theta[(\varphi_1^\epsilon)''](\varphi_1^\epsilon)'' + \theta[(\varphi_1^\epsilon)'](\varphi_1^\epsilon)'\frac{n-1}{r} = -\lambda_1^\epsilon \varphi_1^\epsilon \quad\text{in } \mathrm{int}(K'),
	\end{align}
	with $\|\lambda_1^\epsilon \varphi_1^\epsilon\|_\infty \leq C\max\{\hat{\lambda}, \lambda((\nicefrac{1}{4}, \nicefrac{1}{4}))\}:=\overline{C}$, where $C$ is the Harnack constant above. We proceed with a compactness argument, following \cite{esteban2010eigenvalues}.
	
	Let $\gamma_\epsilon:=\|\varphi_1^\epsilon\|_{C^1(K')}$ and define $\tilde{\varphi^\epsilon} = \nicefrac{\varphi_1^\epsilon}{\gamma_\epsilon}$. Using that $\Mcalm$ is positive homogeneous, we have that $\tilde{\varphi^\epsilon}$ is also a solution of \eqref{eigenODEs}, and since $\|\tilde{\varphi^\epsilon}\|_{C^1(K')}=1$ for all $\epsilon<\nicefrac{1}{4}$, this implies $(\tilde{\varphi^\epsilon})''$ is uniformly bounded as well. 
	
	By compactness, this implies that, up to a subsequence, $\tilde{\varphi^\epsilon}\to\tilde{\varphi}$ uniformly on $K'$ for some $\tilde{\varphi}\in C^2(\mathrm{int}(K'))\cap C^1(K')$. Recalling that $\lambda_1^\epsilon \rightarrow \hat{\lambda}$, we pass to the limit to find
	\begin{equation}\label{eigenlimiteq}
		\Mcalm(D^2\tilde{\varphi}) = -\hat{\lambda}\tilde{\varphi} \quad\text{in } \mathrm{int}(K').
	\end{equation}
	Note $\tilde{\varphi} \not\equiv 0$ since it is the limit of $\tilde{\varphi^\epsilon}$ and $\|\tilde{\varphi^\epsilon}\|_{C^1(K')}=1$ for all $\epsilon<\nicefrac{1}{4}$.
	
	Assume that $\gamma_\epsilon$ becomes unbounded as $\epsilon\to 0$. As before, using the homogeneity of $\Mcalm$, we see that
	\begin{equation*}
		\Mcalm(D^2\tilde{\varphi^\epsilon}) = -\frac{\lambda_1^\epsilon\varphi_1^\epsilon}{\gamma_\epsilon} \quad\text{in } \mathrm{int}(K'),
	\end{equation*}
	and $\|\frac{\lambda_1^\epsilon\varphi_1^\epsilon}{\gamma_\epsilon}\|_\infty \leq \nicefrac{\overline{C}}{\gamma_\epsilon} \to 0$ as $\epsilon\to 0$. Passing to the limit, this gives $\Mcalm(D^2\tilde{\varphi}) = 0$, in contradiction with \eqref{eigenlimiteq}. It follows that $\gamma_\epsilon = \|\varphi_1^\epsilon\|_{C^1(K')}$ is uniformly bounded in $\epsilon$, hence we can pass to the limit as before (\emph{without} the normalization $\tilde{\varphi^\epsilon}$), to find that $\varphi_1^\epsilon\to\hat{\varphi}$ uniformly in $K'$ for some $\hat{\varphi}\in C(K')$ which is a solution of \eqref{eigenlimiteq}.
	
	From the uniform convergence and $\varphi_1^\epsilon \geq 0$ for all $\epsilon>0$, we conclude $\hat{\varphi} \geq 0$, hence the strong maximal principle applies (see Lemma 3.4 in \cite{gilbarg2015elliptic}). Recalling also that $\varphi_1^\epsilon(\frac{1}{2}) = 1$, we have $\hat{\varphi} \not\equiv 0$. Combining these facts, we conclude that $\hat{\varphi} > 0$ in $\mathrm{int}(K')\supset K$.\end{proof}

\begin{remark}\label{unilowerboundforphi-rmk}
	Since $K$ is a closed interval, $\hat{\varphi}$ is bounded by below on $K$ by a positive constant which does not depend on $\epsilon$.
\end{remark}

The proof of Theorem \ref{mainonball} requires two additional lemmas.
\begin{lemma}\label{eigenlemma}
	Let $\varphi_1^\epsilon$ be the solution of \eqref{puccieigenvalue}, as defined above. Then, for any $0<\alpha<1$, there exists a positive constant $C>0$ such that
	\begin{equation}\label{eigenint}
		\int_{\epsilon}^{1 - \epsilon} (\varphi_1^\epsilon)^{-\alpha} \tilde{\rho} \,dr < C.
	\end{equation}
	Furthermore, $C$ may be taken uniformly for $\epsilon\in(0,\frac{1}{4})$.
\end{lemma}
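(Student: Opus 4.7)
The first step is to use the uniform upper bound $\tilde\rho \leq 1/\lambda$ from \eqref{overweight}, which reduces the statement to showing that $\int_\epsilon^{1-\epsilon}(\varphi_1^\epsilon)^{-\alpha}\,dr$ is bounded independently of $\epsilon \in (0, 1/4)$; importantly, this disentangles the integrand from the regularization-dependent weight $\tilde\rho$. Since $\alpha<1$, a sufficient condition is a uniform-in-$\epsilon$ pointwise lower bound of the form $\varphi_1^\epsilon(r) \geq c\min\{r-\epsilon,\,1-\epsilon-r\}$ on $(\epsilon, 1-\epsilon)$.

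To produce such a lower bound I would patch together an interior estimate with two boundary barrier constructions, using the key observation that $-\Mcalm(D^2\varphi_1^\epsilon) = \lambda_1^\epsilon\varphi_1^\epsilon \geq 0$, so $\varphi_1^\epsilon$ is a viscosity supersolution of the homogeneous equation $-\Mcalm(D^2 v)=0$, to which the standard Pucci comparison principle applies. Taking $K$ a compact subinterval of $(0,1)$ containing $[1/4, 3/4]$, Lemma \ref{unilowerboundforphi} together with Remark \ref{unilowerboundforphi-rmk} yield $\varphi_1^\epsilon \geq c_0 > 0$ on $K$, uniformly for all sufficiently small $\epsilon$; the remaining range of $\epsilon$, bounded away from $0$ in $(0, 1/4)$, is a compact interval on which continuous dependence combined with the finiteness of the integral for each fixed $\epsilon$ (via the standard Hopf lemma) suffices. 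For the left boundary I would use the concave barrier $w_{-}(r) = 1 - e^{-\mu_{-}(r-\epsilon)}$ with $\mu_{-} = 2\lambda(n-1)/\Lambda$: direct application of \eqref{radialpucci}, noting $\theta(w_{-}')=\lambda$ and $\theta(w_{-}'')=\Lambda$, gives $-\Mcalm(D^2 w_{-}) \leq 0$ on $[\epsilon, 1/2]$, and the normalization $\varphi_1^\epsilon(1/2)=1 \geq w_{-}(1/2)$ enables comparison, yielding $\varphi_1^\epsilon \geq w_{-}$ on $[\epsilon, 1/2]$. The elementary estimate $1-e^{-x}\geq x/2$ for $x\in[0,1]$ then gives $\varphi_1^\epsilon(r) \geq (\mu_{-}/2)(r-\epsilon)$ for $r-\epsilon \leq 1/\mu_{-}$, and $\varphi_1^\epsilon(r) \geq 1-1/e$ beyond.

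The main obstacle is the right boundary, where linear barriers genuinely fail to be subsolutions of $-\Mcalm(D^2 v)=0$: for $\phi(r) = 1-\epsilon-r$ one has $\phi'<0$, forcing $\theta(\phi')=\Lambda$, and then $-\Mcalm(D^2\phi) = \Lambda(n-1)/r > 0$, the wrong sign. To circumvent this asymmetry I would employ a convex, decreasing exponential barrier $w_{+}(r) = C\bigl(e^{-\mu_{+} r} - e^{-\mu_{+}(1-\epsilon)}\bigr)$ with $\mu_{+} = 4\Lambda(n-1)/\lambda$. The convexity $w_{+}''>0$ now produces $\theta(w_{+}'')=\lambda$, so that by \eqref{radialpucci}, $-\Mcalm(D^2 w_{+}) \leq 0$ on $[1/4, 1-\epsilon]$. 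Choosing the amplitude $C$ independently of $\epsilon$ so that $w_{+}(1/4) \leq c_0 \leq \varphi_1^\epsilon(1/4)$, comparison gives $\varphi_1^\epsilon \geq w_{+}$ on $[1/4, 1-\epsilon]$, and a mean-value estimate on $w_{+}'$ (whose absolute value is bounded below uniformly in $\epsilon$ because $e^{-\mu_+(1-\epsilon)} \geq e^{-\mu_+}$) produces $\varphi_1^\epsilon(r)\geq c_{+}(1-\epsilon-r)$ on this interval with $c_{+}$ independent of $\epsilon$.

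Combining the uniform linear lower bounds near each endpoint with the uniform positive interior lower bound, the bound $\int_\epsilon^{1-\epsilon}(\varphi_1^\epsilon)^{-\alpha}\tilde\rho\,dr \leq C$ follows from the integrability of $s^{-\alpha}$ near $s=0$ for $\alpha<1$.
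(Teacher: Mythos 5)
Your proof is correct and takes a genuinely different route from the paper's. The paper argues via two Hopf-lemma derivative estimates: it constructs a Gaussian-type sub-barrier $v(r) = \frac{e^{-\beta(1/2-r)^2} - e^{-\beta(1/2-\epsilon)^2}}{1-e^{-\beta(1/2-\epsilon)^2}}$ to bound $(\varphi_1^\epsilon)'(\epsilon)$ from below, a separate supersolution $\psi(r)=A(1-e^{-\gamma(r-\epsilon)})$ to bound it from above, and then converts the two endpoint-derivative bounds into the pointwise estimate $\varphi_1^\epsilon(r) > \tfrac{(\varphi_1^\epsilon)'(\epsilon)}{2}(r-\epsilon)$ by appealing to the first-order Taylor expansion of $\varphi_1^\epsilon$ at $\epsilon$. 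You dispense with the derivative estimates and the Taylor step entirely by building explicit exponential sub-barriers of $-\Mcalm(D^2v)\le 0$ that sit below $\varphi_1^\epsilon$ \emph{pointwise} and are themselves already linear-from-below in the distance to the endpoint. Your observation about the asymmetry forced by the drift $\tfrac{n-1}{r}\theta(\phi')\phi'$ is exactly the key point: on the left a concave increasing exponential works because $\theta(w_-')=\lambda$ and $\theta(w_-'')=\Lambda$ produce the favorable sign, while on the right a linear barrier fails and one must use the convex decreasing $e^{-\mu_+ r}$, with $\theta(w_+'')=\lambda$ and $\theta(w_+')=\Lambda$, to recover it. What your approach buys is a shorter, fully quantitative argument that avoids extracting uniform-in-$\epsilon$ control on the $o(|r-\epsilon|)$ remainder in the Taylor step, and whose subsolution inequality is verified cleanly over the entire comparison interval (the Gaussian barrier's $v''$ actually changes sign as $r$ approaches $1/2$, so the paper's subsolution check is more delicate there). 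Both proofs rely on Lemma \ref{unilowerboundforphi} and Remark \ref{unilowerboundforphi-rmk} for the uniform positive lower bound on a fixed interior interval, and on the crude weight bound $\tilde\rho\le \nicefrac{1}{\lambda}$ from \eqref{overweight} to decouple the integrand from the regularization-dependent weight.
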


\begin{proof}
	By Remark \ref{unilowerboundforphi-rmk}, it is possible to bound $\varphi_1^\epsilon$ by below by a positive constant uniformly for small  $\epsilon$ over a closed interval $K\subset (0,1)$, to be chosen later. Hence, to obtain \eqref{eigenint} it is sufficient to bound the integral near the endpoints $\epsilon$ and $1-\epsilon$.
	
	We now proceed as in the proof of Hopf's lemma to obtain a uniform lower bound for $(\varphi_1^\epsilon)'(\epsilon)$. Let $\beta>0$ and
	\begin{equation*}
		v(r) = \frac{e^{-\beta(\frac{1}{2} - r)^2} - e^{-\beta(\frac{1}{2} - \epsilon)^2}}{1 - e^{-\beta(\frac{1}{2} - \epsilon)^2}} \quad \textrm{ for }\quad \epsilon<r<\frac{1}{2}.
	\end{equation*}
	We verify that $v\geq 0, v(\epsilon)=0, v(\frac{1}{2})=1$, and compute
	\begin{align}
		v'(r) ={}& \frac{2\beta(\frac{1}{2} - r)\,e^{-\beta(\frac{1}{2} - r)^2}}{1 - e^{-\beta(\frac{1}{2} - \epsilon)^2}} \geq 0,\label{unihopf1}\\[10pt]
		v''(r) ={}& \frac{\left(4\beta^2(\frac{1}{2} - r)^2 - 2\beta\right)e^{-\beta(\frac{1}{2} - r)^2}}{1 - e^{-\beta(\frac{1}{2} - \epsilon)^2}} \label{unihopf2}\geq 0,
	\end{align}
	where the inequality in \eqref{unihopf2} follows from taking a sufficiently large $\beta>0$. 
	
	We abuse notation slightly and define $v(x) = v(|x|)$ in $B_\frac{1}{2}(0)\backslash\overline{B_\epsilon(0)}$. By the previous computation, using also that $\lambda_1^\epsilon > 0, v\geq 0$, we have
	\begin{equation*}
		\Mcalm(D^2v) = \lambda v'' + \lambda\frac{n-1}{r}v' \geq 0 \geq -\lambda_1^\epsilon v.
	\end{equation*}
	Hence $v$ is a subsolution of \eqref{puccieigenvalue}. Since $v(\epsilon)=\varphi_1^\epsilon(\epsilon), v(\frac{1}{2})=\varphi_1^\epsilon(\frac{1}{2})$, by comparison we have $v(r)\leq\varphi_1^\epsilon(r)$ for all $\epsilon<r<\frac{1}{2}$ (see, for example, Appendix A in \cite{armstrong2009principal}). Recalling \eqref{unihopf1}, for all $0<\epsilon<\frac{1}{4}$ this gives 
	\begin{equation}\label{boundphi'below}
		(\varphi_1^\epsilon)'(\epsilon) \geq v'(\epsilon) = \frac{2\beta(\frac{1}{2} - \epsilon)\,e^{-\beta(\frac{1}{2} - \epsilon)^2}}{1 - e^{-\beta(\frac{1}{2} - \epsilon)^2}} \geq \frac{\beta e^{-\frac{\beta}{4}}}{1 - e^{-\frac{\beta}{4}}}=:C.
	\end{equation}
	Note that the last constant does not depend on $\epsilon$.
	
	By looking at the first order expansion of $\varphi_1^\epsilon$ at $\epsilon$,
	\begin{equation*}
		\varphi_1^\epsilon(r) = (\varphi_1^\epsilon)'(\epsilon)(r-\epsilon) + o(|r-\epsilon|),
	\end{equation*}
	we have that there exists a $\delta>0$ such that
	\begin{equation}\label{deltacutoff}
		\varphi_1^\epsilon(r) > \frac{(\varphi_1^\epsilon)'(\epsilon)}{2} (r-\epsilon) \quad \textrm{for all } \quad \epsilon<r<\epsilon + \delta.
	\end{equation}
	In the above series expansion, the constant $\delta>0$ depends only $(\varphi_1^\epsilon)'(\epsilon)$. In view of \eqref{boundphi'below}, we need only bound $(\varphi_1^\epsilon)'(\epsilon)$ by above, independently of $\epsilon$. For this we use a barrier type argument, taking advantage of some of the computations from Sec. \ref{localex}. Define
	\begin{equation*}
		\psi(r) = A\left(1- e^{-\gamma(r-\epsilon)}\right),
	\end{equation*}
	where $A, \gamma>0$ are to be chosen. We have $\psi(\epsilon)=0$, and for an appropriate choice of $A$,
	\begin{equation*}
		\psi(\nicefrac{1}{2}) = A\left(1 - e^{-\gamma(\nicefrac{1}{2} - \epsilon)}\right) \geq A(1 - e^{-\nicefrac{\gamma}{4}}) > 1,
	\end{equation*}
	again using $\epsilon<\nicefrac{1}{4}$. Computing as in \eqref{puccipsi}, and using once more that $\lambda_1^\epsilon \leq \lambda((\nicefrac{1}{4}, \nicefrac{3}{4}))$, we can choose $\gamma$ large enough so that
	\begin{align*}
		&-\Mcalm(D^2\psi) - \lambda_1^\epsilon\psi \geq -\Mcalm(D^2\psi) - \lambda((\nicefrac{1}{4}, \nicefrac{3}{4}))\psi\\
		&\qquad= A\left(\Lambda\gamma^2 - \lambda\gamma\frac{n-1}{r+\epsilon} + \lambda((\nicefrac{1}{4}, \nicefrac{3}{4})) \right)e^{-\gamma(r-\epsilon)} - A\lambda((\nicefrac{1}{4}, \nicefrac{3}{4}))\\
		&\qquad\geq A\left(\Lambda\gamma^2 - \lambda\gamma\frac{n-1}{r}\right)e^{-\gamma r} - A\lambda((\nicefrac{1}{4}, \nicefrac{3}{4})) \geq 0.
	\end{align*}
	Thus, by comparison, $\psi \geq \varphi_1^\epsilon$ in $[\epsilon, 1-\epsilon]$, for all $\epsilon<\nicefrac{1}{4}$. Hence, 
	\begin{equation}\label{boundphi'eps}
		(\varphi_1^\epsilon)'(\epsilon) \leq \psi'(\epsilon) = A\gamma,
	\end{equation}
	and from this we conclude that $\delta$ in \eqref{deltacutoff} does not depend on $\epsilon$.
	
	We then estimate, for $\epsilon<r<\epsilon+\delta$,
	\begin{align*}
		\left(\varphi_1^\epsilon(r)\right)^{-\alpha} <{}& \left(\frac{(\varphi_1^\epsilon)'(\epsilon)}{2}\right)^{-\alpha}(r-\epsilon)^{-\alpha}\\
		\leq{}& (\nicefrac{C}{2})^{-\alpha}(r-\epsilon)^{-\alpha},
	\end{align*}
	where we have used \eqref{boundphi'below} for the second inequality. Recall the bound $\tilde{\rho}(r)\leq \frac{1}{\lambda}$ given by \eqref{overweight}. Then,
	\begin{align*}
		\int_{\epsilon}^{\epsilon + \delta} (\varphi_1^\epsilon(r))^{-\alpha} \tilde{\rho} \,dr \leq{}&  (\nicefrac{C}{2})^{-\alpha}\|\tilde{\rho}\|_\infty \int_{\epsilon}^{\epsilon + \delta} (r-\epsilon)^{-\alpha} \,dr \\
		\leq{}&  (\nicefrac{C}{2})^{-\alpha}\,\frac{1}{\lambda} \int_{0}^{\delta} r^{-\alpha} \,dr < \tilde{C}.
	\end{align*}
	A similar bound can be obtained over $1-\epsilon-\delta < r < 1 - \epsilon$. We may then choose $K=[\delta, 1-\delta]$ with $\delta$ as above and recall that $\varphi_1^\epsilon$ are uniformly bounded by below on $K$. Also note that for all $\epsilon>0$,
	\begin{equation}
		K=[\delta,1-\delta] \supset (\epsilon+\delta, 1-\epsilon-\delta), 
	\end{equation}
	hence, we may combine the bounds near the endpoints with the lower bound on $K$ to obtain \eqref{eigenint}.
\end{proof}

\begin{remark}\label{choiceofK}
	The interval $K = [\delta, 1-\delta]$ is the one that appears in the statement of Theorem \ref{mainonball}. As claimed, $\delta$ depends only on $\lambda, \Lambda, n$.
\end{remark}

\begin{lemma}\label{weighedpoincare}
	Consider $\rho: [\epsilon,1-\epsilon]\to \mathbb{R}$ as defined in Remark \ref{radialrmk}. Let ${v:[\epsilon, 1-\epsilon]\to \mathbb{R}}$ be once differentiable such that $v(1-\epsilon) = 0$. Then,
	\begin{equation}
	\int_\epsilon^{1-\epsilon} |v|\, \rho\,dx \leq \int_\epsilon^{1-\epsilon} |v'|\, \rho\,dx.
	\end{equation}
\end{lemma}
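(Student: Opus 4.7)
The plan is to reduce the weighted inequality to a straightforward Fubini computation combined with the fundamental theorem of calculus, exploiting the fact that the weight $\rho$ is nondecreasing and that the domain $[\epsilon, 1-\epsilon]$ has length strictly less than $1$.

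First I would use the boundary condition $v(1-\epsilon) = 0$ and the fundamental theorem of calculus to write, for every $x \in [\epsilon, 1-\epsilon]$,
\begin{equation*}
|v(x)| = \left|\int_x^{1-\epsilon} v'(s)\,ds\right| \leq \int_x^{1-\epsilon} |v'(s)|\,ds.
\end{equation*}
Multiplying by $\rho(x) \geq 0$, integrating over $[\epsilon, 1-\epsilon]$, and applying Fubini's theorem to swap the order of integration on the region $\{(x,s) : \epsilon \leq x \leq s \leq 1-\epsilon\}$ gives
\begin{equation*}
\int_\epsilon^{1-\epsilon} |v(x)|\,\rho(x)\,dx \leq \int_\epsilon^{1-\epsilon} |v'(s)| \left(\int_\epsilon^s \rho(x)\,dx\right) ds.
\end{equation*}

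Next, I would observe that $\rho$ is nondecreasing on $(0,1)$. Indeed, from the definition in Remark \ref{radialrmk}, $\rho$ is absolutely continuous and satisfies $\rho'(r) = \rho(r)\,\frac{\hat{n}(r)-1}{r}$ a.e., where $\hat{n}-1 = \frac{\theta(w')}{\theta(w'')}(n-1) \geq 0$. Consequently, for every $s \in [\epsilon, 1-\epsilon]$,
\begin{equation*}
\int_\epsilon^s \rho(x)\,dx \leq (s-\epsilon)\,\rho(s) \leq (1-2\epsilon)\,\rho(s) \leq \rho(s),
\end{equation*}
since $s - \epsilon \leq 1 - 2\epsilon < 1$. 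Substituting this bound into the Fubini inequality above yields the claim.

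There is no real obstacle here: the argument is essentially a one-dimensional Hardy/Poincaré-type estimate, and the only substantive input is the monotonicity of $\rho$, which follows immediately from $\hat{n} \geq 1$. The fact that the length of the interval is bounded by $1$ is precisely what allows us to absorb the $(s-\epsilon)$ factor and obtain the clean inequality with constant $1$ stated in the lemma.
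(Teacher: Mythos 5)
Your proof is correct. The key facts you use — $v(x) = -\int_x^{1-\epsilon} v'(s)\,ds$, Tonelli to swap the order, and $\rho$ nondecreasing (because $\hat{n}-1 = \frac{\theta(w')}{\theta(w'')}(n-1) \geq 0$ so the exponent in the definition of $\rho$ has nonnegative a.e.\ derivative) together with $s - \epsilon \leq 1 - 2\epsilon < 1$ — all hold, and the chain of inequalities gives the stated bound with constant $1$.

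Your route is genuinely different from the paper's, which invokes the sharp one-dimensional weighted Poincaré inequality of Steinerberger (Theorem~\ref{steinerbergpoinc}), performs the affine change of variables $g(r) = (1-2\epsilon)r + \epsilon$ followed by the reflection $r \mapsto 1-r$ so as to move the vanishing condition to $0$ and make the transformed weight $\nu$ nonincreasing, and then bounds the Steinerberger constant $\max_x \frac{1}{\nu(x)}\int_x^1 \nu$ by $1$ using the monotonicity of $\rho$ and $|[\epsilon,1-\epsilon]| < 1$. Your argument instead proves the needed estimate from scratch via the fundamental theorem of calculus and Fubini, and is thus more self-contained: it avoids the external reference entirely, at the cost of not exposing that the bound comes from a sharp general constant. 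Both approaches rest on exactly the same two structural facts (monotonicity of $\rho$ and interval length less than $1$), so they are comparable in efficiency; yours is arguably the cleaner presentation for this specific lemma, while the paper's makes explicit the connection to a known sharp inequality.

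One very minor caveat, shared by both proofs: the lemma's hypothesis ``once differentiable'' is, strictly speaking, not quite enough for the fundamental theorem of calculus (a differentiable function can have a nonintegrable derivative), but since the lemma is only ever applied to the Lipschitz regularization $\tilde w$ this is not an issue in practice, and the paper's cited Theorem~\ref{steinerbergpoinc} carries the same implicit assumption.
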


In the proof of the above inequality we employ the following result from \cite{steinerberger2015sharp}:
\begin{theorem}\label{steinerbergpoinc}
	Let $\nu: [0,1] \to \mathbb{R}$ be a non-negative, non-vanishing, continuous weight on the closed unit interval. Let $f: [0,1] \to \mathbb{R}$ be once differentiable and satisfy $f(0)=0$. Then,
	\begin{equation}\label{steinerbergerineq}
	\int_{0}^{1} |f(x)| \nu(x) \,dx \leq \left(\max_{0\leq x \leq 1}\frac{1}{\nu(x)}\int_{x}^{1}\nu(z) \,dz\right) \int_{0}^{1} |f'(x)|\nu(x)\,dx,
	\end{equation}
	and the constant is sharp.
\end{theorem}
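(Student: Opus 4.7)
The plan is to apply Theorem \ref{steinerbergpoinc} after (a) flipping the interval so that the zero boundary condition is at the left endpoint, and (b) affinely rescaling to $[0,1]$, then to bound the resulting sharp constant explicitly using the monotonicity of $\rho$.

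First I would set $L = 1-2\epsilon$ and perform the change of variables $y = 1-\epsilon-x$, which sends $[\epsilon,1-\epsilon]$ to $[0,L]$ and turns the condition $v(1-\epsilon)=0$ into $\tilde v(0)=0$, where $\tilde v(y) = v(1-\epsilon-y)$ and $\tilde\rho(y) = \rho(1-\epsilon-y)$. Rescaling by $s = y/L$ and setting $f(s) = \tilde v(Ls)$, $\nu(s) = \tilde\rho(Ls)$, both sides of the inequality transform cleanly: the $L^1(\rho\,dx)$-integral of $|v|$ equals $L\int_0^1|f|\nu\,ds$, while the $L^1(\rho\,dx)$-integral of $|v'|$ equals $\int_0^1|f'|\nu\,ds$ (the factor of $L$ from $dy = L\,ds$ cancels the factor of $1/L$ from the chain rule). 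Now $f(0)=0$ and $\nu$ is a nonnegative, nonvanishing, continuous weight on $[0,1]$, so Theorem \ref{steinerbergpoinc} applies and, after unwinding the two substitutions, yields
\begin{equation*}
\int_\epsilon^{1-\epsilon} |v|\,\rho\,dx \;\leq\; \left(\max_{\epsilon \leq x \leq 1-\epsilon}\frac{1}{\rho(x)}\int_\epsilon^{x}\rho(z)\,dz\right) \int_\epsilon^{1-\epsilon} |v'|\,\rho\,dx.
\end{equation*}

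It then suffices to show the bracketed constant is at most $1$. Here I use the specific form of $\rho$: since $\hat n - 1 = \frac{\theta(w')}{\theta(w'')}(n-1) \geq 0$, one has $\rho'(r) = \rho(r)(\hat n - 1)/r \geq 0$, so $\rho$ is nondecreasing on $(0,1)$. Consequently, for every $x\in[\epsilon,1-\epsilon]$,
\begin{equation*}
\int_\epsilon^{x}\rho(z)\,dz \;\leq\; (x-\epsilon)\,\rho(x) \;\leq\; (1-2\epsilon)\,\rho(x) \;\leq\; \rho(x),
\end{equation*}
which gives the desired inequality (with constant $1-2\epsilon$, actually slightly better than $1$).

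There is no real obstacle; the only points that require care are the bookkeeping in the two changes of variables (ensuring that the weight transforms correctly and that the boundary condition ends up at $0$ so that Theorem \ref{steinerbergpoinc} is applicable), and verifying that $\hat n \geq 1$ so that $\rho$ is indeed nondecreasing — this is immediate from the definition of $\theta$ and the assumption $n\geq 1$.
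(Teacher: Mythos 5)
Your proposal does not prove the stated result: it \emph{invokes} Theorem \ref{steinerbergpoinc} as a black box in order to derive Lemma \ref{weighedpoincare}. Since the statement to be established here is Theorem \ref{steinerbergpoinc} itself, the argument is circular --- everything after ``so Theorem \ref{steinerbergpoinc} applies'' presupposes the conclusion. (For what it is worth, the flip, the affine rescaling, and the monotonicity bound on the resulting constant that you do carry out reproduce, essentially verbatim, the paper's proof of Lemma \ref{weighedpoincare}; but the paper does not prove Theorem \ref{steinerbergpoinc} either, it quotes it from \cite{steinerberger2015sharp}, so there is no internal proof to compare against and the burden of proving the theorem remains.)

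A direct proof of the inequality itself is short. Since $f(0)=0$, we have $|f(x)|\le\int_0^x|f'(t)|\,dt$, and by Tonelli's theorem
\begin{align*}
\int_0^1|f(x)|\,\nu(x)\,dx&\le\int_0^1\left(\int_0^x|f'(t)|\,dt\right)\nu(x)\,dx=\int_0^1|f'(t)|\left(\int_t^1\nu(x)\,dx\right)dt\\
&=\int_0^1|f'(t)|\,\nu(t)\left(\frac{1}{\nu(t)}\int_t^1\nu(x)\,dx\right)dt,
\end{align*}
and bounding the bracketed factor by its maximum over $[0,1]$ (finite and attained because $\nu$ is continuous and non-vanishing on a compact interval) yields \eqref{steinerbergerineq}. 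The genuinely nontrivial assertion is sharpness of the constant, which requires exhibiting near-extremal $f$ (concentrating $|f'|$ near a point where the maximum is attained); this is the actual content of \cite{steinerberger2015sharp} and is nowhere addressed in your proposal. If your intent was to prove Lemma \ref{weighedpoincare} rather than Theorem \ref{steinerbergpoinc}, your argument is correct and coincides with the paper's; but as a proof of the theorem as stated, it is missing entirely.
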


\noindent\textit{Proof of Lemma \ref{weighedpoincare}:} Let $v$ be as in the statement of the Lemma, and define $g:[0,1]\rightarrow \mathbb{R},\ g(r)=(1-2\epsilon)r + \epsilon$. Note that $g$ is an affine change of variables sending $[0,1]$ to $[\epsilon, 1-\epsilon]$ and that $g'(r) = 1-2\epsilon$. Define
\begin{equation*}
f(r)=v(g(1-r)), \qquad \nu(r)=\rho(g(1-r)).
\end{equation*}
We have that $f(0)=v(g(1))=v(1-\epsilon)=0$, hence $f$ so defined satisfies the hypotheses of Theorem \ref{steinerbergpoinc}. The weight $\rho$ is continuous and non-negative on $[0,1]$, and furthermore, from the computations in Remark \ref{underweightrmk}, for all $r\in[\epsilon, 1-\epsilon]$
\begin{equation*}
\rho(r) \geq \left(\frac{r}{2}\right)^{\frac{\Lambda}{\lambda}(n-1)} > 0,
\end{equation*}
i.e., $\rho$ is also non-vanishing.

By changing variables, we obtain
\begin{align*}
\int_{0}^{1} |f(r)|\nu(r) \,dx ={}& \int_{0}^{1} |v(g(1-r))| \rho(g(1-r)) \,dr\\
={}& \frac{1}{1-2\epsilon}\int_{\epsilon}^{1-\epsilon} |v(r)| \rho(r) \,dr,
\end{align*}
and similarly,
\begin{align*}
\int_{0}^{1} |f'(r)|\nu(r) \,dr ={}& \int_{0}^{1}|v'(g(r))||g'(r)|\rho(g(1-r)) \,dr\\
={}& \int_{\epsilon}^{1-\epsilon} |v'(r)| \rho(r) \,dr.
\end{align*}
Next, we estimate the constant in \eqref{steinerbergerineq}. It is easy to check that $\rho$ is strictly increasing from the definition, as is $g$. Hence, for $s\geq r$ we have $\rho(g(1-s)) \leq \rho(g(1-r))$. Therefore, for all $0\leq r\leq 1$,
\begin{align*}
\frac{1}{\nu(r)} \int_r^1\nu(s) \,ds ={}& \frac{1}{\rho(g(1-r))} \int_r^1 \rho(g(1-s))\,ds\\
\leq{}& \frac{1}{\rho(g(1-r))} \rho(g(1-r))(1-r)\\
\leq{}& 1-r \leq 1.
\end{align*}

By the above computations, we can apply \eqref{steinerbergerineq} to obtain
\begin{equation*}
\int_\epsilon^{1-\epsilon} |v(r)|\, \rho(r)\,dx \leq \int_\epsilon^{1-\epsilon} |v'(r)|\, \rho(r)\,dr.
\end{equation*}
\hfill\qed

\begin{remark}\label{discoweight}
	In the proof of Theorem \ref{mainonball} we will actually use Lemma \ref{weighedpoincare} with the weight $\tilde{\rho}$ instead of $\rho$. This is possible since
	\begin{equation*}
	\frac{1}{\Lambda}\rho \leq \tilde{\rho} \leq \frac{1}{\lambda}\rho.
	\end{equation*}
	The preceding argument, however, does not apply to $\tilde{\rho}$ directly since it is not continuous.
\end{remark}

\section{Nonexistence of global solutions and LOBC}\label{nonexistence}
We now prove our main result in the radial case, i.e., when the spatial domain is a ball and the initial data is radially symmetric. The proof of the result in a general domain follows more or less easily from the radial case.\\

\noindent\textit{Proof of Theorem \ref{mainonball}:} Our proof uses key ideas from that of Theorem 2.1 in \cite{souplet2002gradient}. Some care is required in choosing the constants appearing in our argument in the correct order. Specifically, we first choose $u_0$ large in an appropriate sense, then choose the regularization parameters sufficiently small. This amounts to making $\epsilon$ and $t_0$ approach $0$, although the actual limit is not taken. This difficulty is not present in \cite{souplet2002gradient}, since the solutions dealt with therein are classical and no regularization is needed.

Consider the differential inequality
\begin{align}
	&\dot{y}(t) \geq Cy(t)^p, \quad 0<t_0<t<t_1, \label{blowupODE} \\ 
	& y(t_0) = M_0,
\end{align}
where $C,\,M_0>0$. We can integrate \eqref{blowupODE} explicitly to obtain
\begin{equation*}
	0\leq y(t)^{1-p} \leq C(1-p)(t-t_0) + M_0^{1-p}.
\end{equation*}
Hence, $y(t)^{1-p} \rightarrow 0$ as $t \rightarrow t_0 + \frac{M_0^{1-p}}{C(p-1)}$. Since $1-p<0$, this implies $y(t) \rightarrow +\infty$. Alternatively, for a fixed $t_1>t_0$, blow-up occurs for $t<t_1$ provided we have 
\begin{equation}\label{choiceM0}
	M_0 > \left[C(p-1)(t_1-t_0)\right]^{-\frac{1}{p-1}}.
\end{equation}

So fix $T>0$ and assume that the viscosity solution $u$ of \eqref{modeleq} in $B_1(0)\times [0,T]$ with radial initial data $u_0\in C^1(\overline{B_1(0)})$ satisfies \eqref{boundarydata} in the classical sense. Writing $u=u(r,t)$, with $r\in[0,1]$ and $t\in[0,T]$, this means $u(1,t)=0$ for all $t\in[0,T]$. We will specify the largeness condition on $u_0$ in terms of $M_0$ later, but may consider it set from now on, since it depends only on constants already available. 

Recall now the regularized function $w$ defined in Remark \ref{notationremark}, which satisfies the inequality \eqref{divformeq} in $(\epsilon, 1-\epsilon)\times (t_0,t_1)$. We take $\epsilon$ so that $0<\epsilon<\delta$, where $\delta$ is the same constant given above. This is so that $[\delta, 1-\delta]\subset (\epsilon, 1-\epsilon)$. Note also that the regularization in time may be performed so that $t_0$ and $t_1$ are arbitrarily close to $0$ and $T$, respectively (see Remarks \ref{notationremark} and \ref{radialrmk}). It follows that $M_0$ depends only on $p,\ T,$ and the coefficient $C$ in \eqref{blowupODE}. 

Using the solution pair $(\lambda_1, \varphi_1)$ to the eigenvalue problem \eqref{puccieigenvalue}, we define
\begin{equation*}
	z(t)=\int_{\epsilon}^{1-\epsilon} w(r,t) \varphi_1(r) \tilde{\rho}(r) \,dr, \quad t\in(t_0,t_1).
\end{equation*}
We will to show that $z=z(t)$ satisfies \eqref{blowupODE} with $z(t_0)\geq M_0$, and consequently blows up for some $t<t_1$. This is a contradiction, since $z$ is uniformly bounded for all $t\geq t_0$ by the uniform convergence of $w \rightarrow u$ and the fact that $u\ ,\varphi_1,$ and $\tilde{\rho}$ are all uniformly bounded (see Remark \ref{uniformboundu}, \eqref{upboundphi} and \eqref{overweight}, respectively). Therefore, the solution $u$ cannot satisfy the boundary data in the classical sense for all time. In other words, LOBC occurs.

Using \eqref{divformeq}, we compute
\begin{align}\label{zdot}
	\dot{z}(t) &={} \int_{\epsilon}^{1-\epsilon} w_t (r,t) \varphi_1(r) \tilde{\rho} \,dr \geq \int_{\epsilon}^{1-\epsilon} \left((\rho w')' + \tilde{\rho}|w'|^p\right) \varphi_1(r) \,dr\nonumber\\
	&={} \int_{\epsilon}^{1-\epsilon} (\rho w')'\varphi_1(r) \,dr + \int_{\epsilon}^{1-\epsilon} |w'|^p \varphi_1(r) \tilde{\rho} \,dr\nonumber\\
	&=:{} I_1 + I_2.
\end{align}

(We omit some of the functions' arguments for simplicity.) Integrating by parts twice in $I_1$, we obtain
\begin{align*}
	I_1 = \int_{\epsilon}^{1-\epsilon} w(r,t)\,(\rho \varphi_1')' \,dr + \rho w'\varphi_1|_\epsilon^{1-\epsilon} - \rho w\varphi_1'|_\epsilon^{1-\epsilon}.
\end{align*}
Since $\varphi_1(\epsilon)=\varphi_1(1-\epsilon)=0$, we have that $\rho w'\varphi_1|_\epsilon^{1-\epsilon} = 0.$ On the other hand, $\varphi_1'(\epsilon)>0$, $\varphi_1'(1-\epsilon)<0$ and $\rho, w\geq 0$ imply that $- \rho w\varphi_1'|_\epsilon^{1-\epsilon} \geq 0.$ Hence, we continue estimating
\begin{align}\label{i1}
	I_1 \geq{}& \int_{\epsilon}^{1-\epsilon} w(r,t)\,(\rho \varphi_1')'  \,dr \geq \int_{\epsilon}^{1-\epsilon} w(r,t)\,(\tilde{\rho} \Mcalm(D^2\varphi_1))  \,dr\nonumber\\
	={}& \int_{\epsilon}^{1-\epsilon} w(r,t)\,(-\lambda_1\varphi_1) \tilde{\rho}  \,dr = -\lambda_1 z(t).
\end{align}
The second inequality above comes from the minimality of the Pucci operator. Indeed, for all radial $\varphi\in C^2$, by the definition of the weights $\rho$ and $\tilde{\rho},$ we have
\begin{equation*}
	\frac{1}{\tilde{\rho}}(\rho\,\varphi')' = \theta(w'')\varphi'' + \theta(w')\varphi'\frac{n-1}{r}
\end{equation*}
wherever $w''$ is defined. This defines an elliptic operator with ellipticity constants $\lambda, \Lambda$. Therefore, $\Mcalm(D^2\varphi) \leq \nicefrac{1}{\tilde{\rho}}(\rho\,\varphi')'$ a.e. in $(\epsilon, 1-\epsilon)$.

We turn to estimating $I_2$. From Hölder's inequality for the measure $\tilde{\rho}(r) \,dr$,
\begin{align}\label{pgt2}
	\int_{\epsilon}^{1-\epsilon} |w'|\, \tilde{\rho} \,dr ={}& \int_{\epsilon}^{1-\epsilon} |w'| (\varphi_1)^\frac{1}{p} (\varphi_1)^{-\frac{1}{p}}\tilde{\rho} \,dr\nonumber\\
	\leq{}& \left(\int_{\epsilon}^{1-\epsilon} (\varphi_1)^{-\frac{1}{p-1}} \tilde{\rho} \,dr\right)^\frac{p-1}{p} \left(\int_{\epsilon}^{1-\epsilon} |w'|^p \varphi_1\tilde{\rho} \,dr\right)^\frac{1}{p}.
\end{align}
The assumption $p>2$ implies $\nicefrac{1}{p-1} \in (0,1)$, so we can apply Lemma \ref{eigenlemma} to bound the first integral in the right-hand side of \eqref{pgt2} by a constant $C$ that does not depend on $\epsilon$. We then have
\begin{equation}\label{fromhoel}
	\int_{\epsilon}^{1-\epsilon} |w'| \tilde{\rho} \,dr \leq C \left(\int_{\epsilon}^{1-\epsilon} |w'|^p \varphi_1\tilde{\rho} \,dr\right)^\frac{1}{p} = CI_2^\frac{1}{p}.
\end{equation}

Define $\tilde{w}(r,t):= w(r,t) - w(1-\epsilon)$ for all $r\in[\epsilon, 1-\epsilon],\ t\in (t_0,t_1)$. Note that $\tilde{w}'=w'$, and $\tilde{w}(1-\epsilon, t)=0$, hence Poincaré's inequality (Lemma \ref{weighedpoincare}) applies. Moreover, since we are taking $u_0\geq 0,\, u_0\not\equiv 0 $,  the strong minimum principle for $u$ the viscosity solution of \eqref{modeleq} in $B_1(0)\times [0,T]$ implies that $u(0,t)>0$ for all $t>0$ (see e.g., \cite{da2004remarks}). Therefore, by the uniform convergence of $w \to u$, we have that
\begin{equation*}
	w(\epsilon, t) \to u(0,t) > 0, \quad\text{for all } t>t_0,\ \textrm{as } \epsilon\to 0.
\end{equation*}
On the other hand, the uniform convergence $w\to u$ and the uniform continuity of $u$ in $\overline{B_1(0)}\times[0,T]$ imply that  $w(1-\epsilon) \to 0$ as $\epsilon\to 0$. Indeed, let $\nu > 0$. By assumption $u(1,t)=0$ for all $t\geq t_0$. Hence, for small $\epsilon>0$,
 \begin{align}\label{w1-eps}
 	w(1-\epsilon,t) ={}& w(1-\epsilon,t) - u(1,t)  \leq |w(1-\epsilon,t) - u(1-\epsilon,t)| \nonumber\\
 	&{}\ + |u(1-\epsilon,t) - u(1,t)| < 2\nu.
 \end{align}
(We will later simply write $w(1-\epsilon)=o(1)$.) Again by the minimum principle ($w$ is an $L^\infty$-strong solution, as shown in Proposition \ref{appeqlemma}, hence also a viscosity solution; see e.g., \cite{crandall1996equivalence}),
\begin{equation*}
	\min_{[\epsilon, 1-\epsilon]} w(\cdot, t) = \min\{w(\epsilon,t), w(1-\epsilon, t)\}.
\end{equation*}
Together with the considerations above, this implies that $\min_{[\epsilon, 1-\epsilon]} w(\cdot, t) = {w(1-\epsilon, t)}$. Therefore, $\tilde{w} \geq 0$.

Thus, applying Lemma \ref{weighedpoincare},
\begin{equation*}
	\int_{\epsilon}^{1-\epsilon} (w(r,t) - w(1-\epsilon,t))\tilde{\rho}\,dr = \int_{\epsilon}^{1-\epsilon} \tilde{w}(r,t)\,\tilde{\rho}\,dr \leq \int_{\epsilon}^{1-\epsilon} |\tilde{w}'|\,\tilde{\rho}\,dr = \int_{\epsilon}^{1-\epsilon} |w'|\,\tilde{\rho}\,dr.
\end{equation*}
Since $\tilde{\rho}$ is uniformly bounded (see \eqref{overweight}), this gives
\begin{equation*}
	\int_{\epsilon}^{1-\epsilon} w(r,t)\,\tilde{\rho}\,dr \leq Cw(1-\epsilon,t) + \int_{\epsilon}^{1-\epsilon} |w'|\,\tilde{\rho}\,dr.
\end{equation*}
Recalling \eqref{upboundphi} and using the elementary inequality ${(a+b)^p\leq 2^{p-1}(a^p + b^p)}$, we have
\begin{align*}
	z(t)^p ={}& \left(\int_{\epsilon}^{1-\epsilon} w(r,t)\,\varphi_1(r)\,\tilde{\rho}\,dr\right)^p \leq C\left(\int_{\epsilon}^{1-\epsilon} w(r,t)\,\tilde{\rho}\,dr\right)^p\\
	\leq{}& C\left(Cw(1-\epsilon,t) + \int_{\epsilon}^{1-\epsilon} |w'|\,\tilde{\rho}\,dr\right)^p\\
	\leq{}& C\left[w(1-\epsilon,t)^p + \left(\int_{\epsilon}^{1-\epsilon} |w'|\,\tilde{\rho}\,dr\right)^p\right].
\end{align*}
Together with \eqref{fromhoel}, this implies
\begin{equation}\label{i2}
	I_2 \geq C\left(\int_{\epsilon}^{1-\epsilon} |w'|^p \varphi_1\tilde{\rho} \,dr\right)^p \geq Cz(t)^p - w(1-\epsilon,t)^p.
\end{equation}

Thus, combining \eqref{zdot}, \eqref{i1}, \eqref{w1-eps} and \eqref{i2}, we have obtained
\begin{equation}\label{zODE1}
	\dot{z}(t) \geq - \lambda_1 z(t) + Cz(t)^p + o(1), \quad t\in (t_0,\,t_1),
\end{equation}
where, crucially, the coefficient $C$ does not depend on either $\epsilon$ or $u_0$.

We can reduce \eqref{zODE1} to \eqref{blowupODE} as follows. Using that $\varphi_1 = \varphi_1^\epsilon \to \hat{\varphi}$ in $[\delta, 1-\delta]$ uniformly as $\epsilon\to 0$ (see Lemma \ref{unilowerboundforphi}), $w \to u$ in $[0,1]\times [0,T]$ uniformly as $\epsilon,\ t_0\to 0$, and the uniform continuity of $u$ (more precisely that $u(\cdot, t_0)\to u_0$ as $t_0\to 0$), the bound $\tilde{\rho}\geq \hat{\rho}$ given in \eqref{underweight}, and the fact that all these functions are nonnegative, we have
\begin{align}\label{estimzinit}
	z(t_0) ={}& \int_{\epsilon}^{1-\epsilon} w(r,t_0)\, \varphi_1(r)\, \tilde{\rho}(r) \,dr \geq
	\int_{\delta}^{1-\delta} w(r,t_0)\, \varphi_1(r) \hat{\rho}(r)\,dr \nonumber\\
	\geq{}&  \int_{\delta}^{1-\delta} w(r,t_0)\, \hat{\varphi}(r) \hat{\rho}(r)\,dr  + o(1)\nonumber\\
	\geq{}&  \int_{\delta}^{1-\delta} u_0(r) \hat{\varphi}(r) \hat{\rho}(r)\,dr + o(1),
\end{align}
where we have also used that $\hat{\varphi}$ and $\hat{\rho}$ are uniformly bounded. Since these functions are also bounded by below in $[\delta, 1-\delta]$ by a positive constant, choosing the value of the last integral in \eqref{estimzinit} is equivalent to the condition \eqref{intconditionu0} from the statement of the Theorem.


We recall from the proof of Lemma \ref{unilowerboundforphi} that $\lambda_1=\lambda_1^\epsilon$ is also uniformly bounded. Since $p>2$, this implies that the term $Cz(t)^p$ dominates the linear term in \eqref{zODE1}. More precisely, if, say $\lambda_1^\epsilon \leq C'$, setting
\begin{equation*}
	\int_{\delta}^{1-\delta} u_0(r) \hat{\varphi}(r) \hat{\rho}(r)\,dr \geq \max\left\{M_0, \left(\frac{2C'}{C}\right)^{\frac{1}{p-1}}\right\}  + 1,
\end{equation*}
gives both $\dot{z}\geq (\nicefrac{C}{2})z(t)^p$ and $z(t_0)\geq M_0$, which is equivalent to $\eqref{blowupODE}$. This gives the desired contradiction.\hfill\qed

\begin{remark}
	The hypothesis which leads to contradiction, i.e, that the solution $u$ satisfies the boundary data in the classical sense, is used only to determine ${w(1-\epsilon,t)=o(1)}$ in $\eqref{w1-eps}$, and in the subsequent application of Lemma \eqref{weighedpoincare}. This is essential, however, to show that $z$ satisfies $\eqref{blowupODE}$. 
	
	Note also that, although $\eqref{blowupODE}$ blows-up for $p>1$, the use of $p>2$ is crucial in the application of Lemma \ref{eigenlemma} in the estimate \eqref{pgt2}.
\end{remark}

We now use Theorem \ref{mainonball} to provide an example of LOBC for solutions of \eqref{modeleq}-\eqref{initialdata} in a more general bounded domain. The computations closely follow \cite{yuxiang2010single}.
\begin{corollary}\label{gendomain}
	Let $\Omega$ be a bounded domain satisfying a uniform interior sphere condition. Then, there exist $u_0 \in C^1(\overline{\Omega}),$ with $u_0\geq 0$ and $u_0|_{\partial \Omega} = 0$, such that LOBC occurs for solutions of \eqref{modeleq}-\eqref{initialdata}  in a finite time $T=T(u_0, \Omega)$.
\end{corollary}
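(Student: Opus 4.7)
The strategy is to reduce to the radial case by fitting an interior ball that touches $\partial\Omega$ at a chosen point and then propagating LOBC from this ball to $\Omega$ via comparison.

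First, fix $y_0 \in \partial\Omega$. By the uniform interior sphere condition there exist $R>0$ and $x_0\in\Omega$ with $B_R(x_0)\subset\Omega$ and $y_0\in\partial B_R(x_0)\cap\partial\Omega$. The rescaling $\tilde{v}(y,t):=c\,v(x_0+Ry,t)$ with $c:=R^{p/(p-1)}$ turns \eqref{modeleq} on $B_R(x_0)$ into an equation of exactly the same form on $B_1(0)$, but with rescaled Pucci constants $R^{-2}\lambda$, $R^{-2}\Lambda$. Apply Theorem \ref{mainonball} to this rescaled problem, choosing the radial datum $\tilde{v}_0$ with compact support in $B_1(0)$ and integral $\int_\delta^{1-\delta}\tilde{v}_0(r)\,dr$ large enough. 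Pulling back yields a radial $v_0\in C^1(\overline{B_R(x_0)})$, compactly supported in $B_R(x_0)$, with $v_0\geq 0$, such that the global viscosity solution $v$ of \eqref{modeleq}--\eqref{initialdata} on $B_R(x_0)$ with generalized homogeneous boundary condition and initial datum $v_0$ undergoes LOBC at some finite time $T^*$.

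Next, extend $v_0$ by zero to $\Omega\setminus B_R(x_0)$, obtaining $u_0\in C^1(\overline{\Omega})$ with $u_0\geq 0$, $u_0|_{\partial\Omega}=0$, and $u_0\geq v_0$ on $B_R(x_0)$. Let $u$ be the corresponding global viscosity solution of \eqref{modeleq}--\eqref{initialdata} on $\Omega$ supplied by \cite{barles2004generalized}; by Remark \ref{uniformboundu}, $u\geq 0$ on $\overline{\Omega}\times[0,\infty)$. To compare $v$ and $u$ on $\overline{B_R(x_0)}$, observe that $u|_{\overline{B_R(x_0)}}$ is a continuous viscosity solution of \eqref{modeleq} on $B_R(x_0)$ with (classical, hence generalized) Dirichlet data $g:=u|_{\partial B_R(x_0)}\geq 0$, while $v$ is a viscosity subsolution on $B_R(x_0)$ with the same generalized data $g$: indeed, since $g\geq 0$ the inequality $\min(v_t-\Mcalm(D^2v)-|Dv|^p,\,v-g)\leq 0$ is weaker than $\min(v_t-\Mcalm(D^2v)-|Dv|^p,\,v)\leq 0$, and the latter holds by construction of $v$. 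Theorem \ref{SCR} combined with Remark \ref{comparisonuptoboundary} then gives $v\leq u$ on $\overline{B_R(x_0)}\times[0,T]$ for every $T>0$. Radial symmetry of $v_0$, together with the rotational invariance of \eqref{modeleq} and uniqueness of the viscosity solution, further implies that $v$ is radial in $B_R(x_0)$. LOBC for $v$ at time $T^*$ furnishes some $y^*\in\partial B_R(x_0)$ with $v(y^*,T^*)>0$; radial symmetry promotes this to $v(y,T^*)>0$ at every $y\in\partial B_R(x_0)$, in particular $v(y_0,T^*)>0$. By comparison, $u(y_0,T^*)\geq v(y_0,T^*)>0$, which is the desired LOBC for $u$ at $y_0\in\partial\Omega$ at the finite time $T:=T^*$, depending on $u_0$ and $\Omega$.

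I expect the main obstacle to be rigorously justifying the application of Theorem \ref{mainonball} to the rescaled equation on $B_R(x_0)$: the constants $\delta$ and $M$ in its statement depend on the (rescaled) ellipticity constants and on $n$, so care is needed when choosing $\tilde{v}_0$ to satisfy the largeness condition for the rescaled problem. This is essentially bookkeeping, since the proof of Theorem \ref{mainonball} is qualitatively insensitive to the particular values of $\lambda,\Lambda$. A secondary technical point, namely the comparison across differing generalized boundary data, is resolved by the elementary monotonicity of the subsolution boundary condition in the boundary datum noted above.
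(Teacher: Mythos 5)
Your overall strategy is sound and leads to a valid proof, but it diverges from the paper's in one structurally interesting way and contains one computational slip worth flagging.

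\textbf{Different route.} The paper first applies Theorem~\ref{mainonball} directly on $B_1(0)$ (with the \emph{original} constants $\lambda,\Lambda$), producing a solution $v$ with LOBC, and only then transports it to the interior tangent ball $B_\eta(x_1)$ via the \emph{parabolic} rescaling
\begin{equation*}
	\tilde v(x,t) = \eta^k\, v\!\left(\eta^{-1}|x-x_1|,\, \eta^{-2}t\right),\qquad k=\tfrac{p-2}{p-1},
\end{equation*}
which scales space, time, and amplitude simultaneously and therefore leaves $\lambda,\Lambda$ \emph{unchanged}: $\tilde v$ again solves \eqref{modeleq}. You instead rescale \emph{before} invoking Theorem~\ref{mainonball}, using a space-only scaling $\tilde v(y,t)=c\,v(x_0+Ry,t)$, at the cost of changing the Pucci constants to $R^{-2}\lambda,R^{-2}\Lambda$. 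This works because the constants $\delta,M$ in Theorem~\ref{mainonball} are allowed to depend on $\lambda,\Lambda$, as you correctly observe, but the paper's parabolic rescaling avoids the bookkeeping entirely and also produces the explicit largeness criterion of Corollary~\ref{gendomainbis}, which your version obscures by letting the threshold depend on the rescaled constants.

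\textbf{Computational slip.} Your scaling constant $c=R^{p/(p-1)}$ has the wrong sign in the exponent. With $\tilde v(y,t)=c\,v(x_0+Ry,t)$ one gets
\begin{equation*}
	\tilde v_t - R^{-2}\Mcalm\!\left(D_y^2\tilde v,\lambda,\Lambda\right) = \frac{c^{1-p}}{R^{p}}\,|D_y\tilde v|^p,
\end{equation*}
so the gradient term normalizes to $|D_y\tilde v|^p$ only if $c^{1-p}=R^p$, i.e.\ $c=R^{-p/(p-1)}$. The exponent $p/(p-1)$ you wrote corresponds to the reciprocal. This does not affect the logic of the proof, only the claimed formula, but should be corrected.

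The comparison step (identifying $u|_{\overline{B_R(x_0)}}$ as a supersolution with boundary data $g=u|_{\partial B_R(x_0)}\geq 0$ and $v$ as a subsolution relative to that data because the subsolution inequality is monotone in the datum) is a careful and correct reformulation of what the paper states tersely, and the use of radial symmetry to propagate LOBC from one boundary point of the ball to $x_0$ is exactly as in the paper.
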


\textit{Proof:}
	From the interior sphere condition, there exists an $\eta > 0$ such that for all $x_0\in \partial \Omega$, there exists a ball of radius $\eta$ tangent to $\partial \Omega$ at $x_0$, say $B_\eta(x_1)$. Consider $\varphi\in C_0^\infty(B_1(0))$ a radial cut-off function such that 
	\begin{equation*}
		\varphi(r)=\left\{\begin{array}{rl}
			1, &\quad r \leq \frac{2}{3}\\[6pt]
			0, &\quad r \geq \frac{3}{4}
		\end{array}\right.
	\end{equation*}
	and consider the solution $v$ of 
	\begin{equation*}
		\left\{\begin{array}{ll}
			v_t - \Mcalm(D^2v) - |Dv|^p = 0 &\text{in } B_1(0)\times(0,\infty),\\
			v = 0 &\text{on } \partial B_1(0)\times[0,\infty),\\
			v(x,0) =  C\varphi(|x|) &\text{in } \overline{B_1(0)},
		\end{array}\right.
	\end{equation*}
	where the boundary condition is understood in the viscosity sense. It is easy to check that, for large enough $C>0$, $C\varphi$ satisfies \eqref{intconditionu0}. Hence, by Theorem \ref{mainonball}, LOBC occurs for $v$ at some time $T = T(C\varphi) > 0$. As before, $v$ is radial, thus $v(x, T(C\varphi)) > 0$ for all $x\in \partial B_1(0)$.
	
	We now rescale and translate $v$ to obtain a solution in $B_\eta(x_1)$: define
	\begin{equation}\label{rescaling}
		\tilde{v}(x,t) = \eta^k v(\eta^{-1}|x-x_1|, \eta^{-2}t),
	\end{equation}
	where $k=\frac{p-2}{p-1}$. Then $\tilde{v}$ is a solution of \eqref{modeleq} in $B_\eta(x_1)\times (0,\infty)$ satisfying homogeneous boundary data (again in the viscosity sense) and initial condition
	\begin{equation*}
		\tilde{v}(x,0) = C\eta^k \varphi(\eta^{-1}|x-x_1|).
	\end{equation*}
	We note that, since the rescaling \eqref{rescaling} produces a solution of \eqref{modeleq} on the corresponding rescaled domain, the boundary condition in the viscosity sense is preserved: if the equation holds ``up to a boundary point'' $(x,t)\in\partial B_1(0)\times(0,\infty)$, it will hold up to the point of $\partial B_\eta(x_1)\times (0,\infty)$ where it is mapped.
	
	The solution $\tilde{v}$ is radially symmetric with respect to $x_1$. Thus we have $\tilde{v}(x_0,T)>0$, where $T=\eta^2 T(C\varphi)$.
	
	Define now
	\begin{equation*}
		u_0(x) = \left\{\begin{array}{cl}
		\tilde{v}(x,0) &\quad\text{if } x\in B_\eta(x_1),\\
		0 &\quad\text{if } x\in \overline{\Omega}\backslash B_\eta(x_1),
		\end{array}\right.
	\end{equation*}
	and consider the solution $u$ of
	\begin{equation*}
		\left\{\begin{array}{ll}
			u_t - \Mcalm(D^2u) = |Du|^p &\text{in } \Omega\times(0,\infty),\\
			u = 0 &\text{on } \partial\Omega\times[0,\infty),\\
			u(x,0) = u_0 (x) &\text{in }\overline{\Omega},
		\end{array}\right.
	\end{equation*}
	with $u_0$ as previously defined. Of course, $u$ is also a solution of \eqref{modeleq} in $B_\eta(x_1) \times(0,\infty)$, and satisfies $u\geq 0$ on $\partial B_\eta(x_1) \times(0,\infty)$ in the viscosity sense. Thus, by comparison we have
	\begin{equation*}
		u \geq \tilde{v} \quad\text{in } \overline{B_\eta(x_1)} \times [0,\infty).
	\end{equation*}
	Hence,
	\begin{equation*}
		u(x_0, T) \geq \tilde{v}(x_0, T) > 0,
	\end{equation*}
	i.e., LOBC occurs for $u$.\hfill\qed

The previous result might be rephrased to include a condition applicable to more general $u_0$ than the example provided. We avoided this for simplicity, since the condition is rather convoluted, but do so now for completeness.

For any ball $B_\eta(x_1)\subset \Omega$, where $x_1\in\Omega$, $\eta>0$ and $\partial B_\eta(x_1)$ is tangent to $\partial\Omega$ at $x_0$, denote the radial variable by $r=| x - x_1 |$. Note $0<r<\eta$. For any $v$ defined in $B_\eta(x_1)$, we may define the radial symmetrization
\begin{equation*}
	s(v)(r) = \inf_{\partial B_r(x_1)} v.
\end{equation*}
Note that, for any $u_0\in C(\overline{\Omega})$ such that $u_0 |_{\partial \Omega} = 0$, we have $s(u_0)\leq u_0$ in $B_\eta(x_1)$ and $s(u_0)(\eta) = s(u_0)(x_0) = 0$. Note also that $\|s(u_0)\|_\infty = \|u_0\|_\infty.$

\begin{corollary}\label{gendomainbis}
	Using the notation above, as well as that of Theorem \ref{mainonball} and Corollary \ref{gendomain}, there exists positive constants $\delta=\delta(\lambda, \Lambda, n)$ and $M=M(\lambda, \Lambda, n, p)$ such that LOBC occurs for all solutions of \eqref{modeleq}-\eqref{initialdata} with initial data $u_0$ such that
	\begin{equation}\label{largenessgendomain}
		\sup \left\{\eta^{-k} \int_\delta^{1-\delta} s(u_0)(\eta r) \,dr \right\} > M.
	\end{equation}
	Here, the supremum runs over all $B_{\eta}(x_1) \subset \Omega$ tangent to $\partial \Omega$ for fixed $\eta>0$.
\end{corollary}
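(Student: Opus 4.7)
My plan is to adapt the proof of Corollary \ref{gendomain} by using the radial symmetrization $s(u_0)$ as a rotationally invariant minorant of $u_0$ on a tangent ball $B_\eta(x_1) \subset \Omega$. I will use the same $\delta$ as in Theorem \ref{mainonball} and set the constant $M$ of the present corollary strictly larger than the constant (call it $M_1$) appearing in that theorem, say $M = M_1 + 1$, to leave a margin for a smooth approximation step.

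First, from \eqref{largenessgendomain} and the definition of the supremum, I will fix a ball $B_\eta(x_1) \subset \Omega$ tangent to $\partial \Omega$ at some $x_0 \in \partial \Omega$ for which $\eta^{-k} \int_\delta^{1-\delta} s(u_0)(\eta r)\, dr > M_1$. The function $f(r) := \eta^{-k} s(u_0)(\eta r)$ is continuous on $[0,1]$, nonnegative, and vanishes at $r=1$ since $s(u_0)(\eta) \leq u_0(x_0) = 0$. The main obstacle is that $s(u_0)$ is generally only continuous even when $u_0 \in C^1(\overline{\Omega})$, so Theorem \ref{mainonball} cannot be applied to $f$ directly. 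I will therefore construct a radial $v_0 \in C^1(\overline{B_1(0)})$ with $0 \leq v_0(y) \leq f(|y|)$, $v_0 \equiv 0$ in neighborhoods of $\partial B_1(0)$ and of the origin (to guarantee $C^1$-smoothness of the radial extension at $0$), and $\int_\delta^{1-\delta} v_0(r)\, dr > M_1$. This is a standard mollification of $f$ after truncating it away from $r=0$ and $r=1$; the strict inequality $\int_\delta^{1-\delta} f > M_1$ absorbs the mass lost in these steps.

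Next, by Theorem \ref{mainonball} applied to $v_0$, the viscosity solution $v$ of \eqref{modeleq}-\eqref{boundarydata}-\eqref{initialdata} on $B_1(0)$ exhibits LOBC at some finite $T^* > 0$. By Lemma \ref{radiallemma} and rotational invariance, $v$ is radial, so $v(y, T^*) > 0$ for every $y \in \partial B_1(0)$. I will rescale to $B_\eta(x_1)$ exactly as in Corollary \ref{gendomain} by setting $\tilde v(x, t) := \eta^k v(\eta^{-1}|x - x_1|,\, \eta^{-2} t)$, which solves \eqref{modeleq} on $B_\eta(x_1) \times (0, \infty)$ with homogeneous viscosity Dirichlet data on $\partial B_\eta(x_1)$ and satisfies $\tilde v(x, 0) = \eta^k v_0(\eta^{-1}|x - x_1|) \leq s(u_0)(|x - x_1|) \leq u_0(x)$ on $\overline{B_\eta(x_1)}$.

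Finally, I will compare $\tilde v$ with $u$ on $B_\eta(x_1) \times [0, \infty)$. By Remark \ref{uniformboundu}, $u \geq 0$ on $\overline{\Omega} \times [0, \infty)$, so $u \geq 0$ classically on $\partial B_\eta(x_1)$, while $\tilde v \leq 0$ there in the viscosity sense; combined with $\tilde v(\cdot, 0) \leq u_0 = u(\cdot, 0)$, Theorem \ref{SCR} (after the role-exchange from Remark \ref{signchange}) yields $\tilde v \leq u$ on $\overline{B_\eta(x_1)} \times [0, \infty)$. At $T := \eta^2 T^*$, this gives $u(x_0, T) \geq \tilde v(x_0, T) = \eta^k v(1, T^*) > 0$, so LOBC occurs for $u$ at the boundary point $x_0$.
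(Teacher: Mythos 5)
Your proposal is correct and follows exactly the route the paper intends: the paper omits the proof, stating only that it is analogous to Corollary \ref{gendomain} via scaling between $B_\eta(x_1)$ and $B_1(0)$ and a comparison argument, which is precisely what you carry out. Your extra step of replacing the (merely continuous) symmetrization $s(u_0)$ by a $C^1$ radial minorant before invoking Theorem \ref{mainonball} is a genuine detail the paper glosses over, and your handling of it is sound.
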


\textit{Proof:} The corresponding proof is analogous to that of Corollary \ref{gendomain}, in that it follows by a comparison argument and scaling between $B_\eta(x_1)$ and $B_1(0)$. Hence, it will be omitted. \hfill\qed

\begin{remark}
	After a change of variable, condition \eqref{largenessgendomain} can be written as
	\begin{equation*}
		\sup \left\{ \int_{\eta\delta}^{\eta(1-\delta)} s(u_0)(r) \,dr \right\} > \eta^{k+1}M,
	\end{equation*}
	which more closely resembles the condition given for Theorem \ref{mainonball}, in that the limits of integration and the constant on the right-hand side reflect the dependence on $\Omega$ (through $\eta$), in addition to $\lambda, \Lambda, n$ and $p$. Note that the supremum still runs over all interior tangent spheres $B_{\eta}(x_1)$ for fixed $\eta>0$.
\end{remark}

\section{Extensions}\label{extensions}

To extend the results regarding LOBC to more general equations, we must first guarantee that the global existence result of \cite{barles2004generalized} applies to the equations considered. In fact, that the result applies to our model equation, \eqref{modeleq}, will follow as a particular case. For convenience, we restate part of what was mentioned in the introduction. Namely, consider
\begin{equation*}
u_t - F(D^2 u) = f(Du) \textrm{ in } \Omega \times (0,T),
\end{equation*}
where the nonlinearities are as follows: $F:S(n) \rightarrow \mathbb{R}$ is uniformly elliptic, i.e., 
\begin{equation*}
\Mcalm(X - Y) \leq F(X) - F(Y) \leq \Mcalp (X - Y) \quad \textrm{for all } X,Y \in S(n),
\end{equation*}
and vanishes at zero. In particular, this implies that
\begin{equation}\label{extremalbounds}
\Mcalm(X) \leq F(X) \leq \Mcalp(X)\quad \textrm{for all} \ X\in S(n).
\end{equation}
The gradient nonlinearity $f:\mathbb{R}^n \rightarrow \mathbb{R}$ satisfies $f(\xi) \geq |\xi|^2 h(|\xi|)$ for all $\xi\in \mathbb{R}^n$, where $h:\mathbb{R}\rightarrow\mathbb{R}$ satisfies the growth condition \eqref{growthh} and
\begin{align}
	& h=h(s) \textrm{ is positive nondecreasing for } s>0,\label{hhyp1}\\
	& s\mapsto s^2h(s) \textrm{ is convex},\\
	& h(yz) \leq C(h(y) + h(z))  \textrm{ for large } y,z>0 \textrm{ and some } C>0.\label{hhyp3}
\end{align}
The last condition implies that $h$ grows more slowly than any positive power. Examples which satisfy the conditions above are $h(s) = (\log s)^p$ and $h(s) = (\log s)^p (\log \log s)^q$, for large $s$ and $p,q>0$.

\subsection{Comparison, existence and uniqueness}\label{comparisonsubsec}

We look to verify hypotheses (H1) and (H2) needed for the Strong Comparison Principle, Theorem \ref{SCR}. We begin by setting
\begin{equation}
	G(x,r,\xi,X) = G(\xi,X) = -F(X) + f(\xi),
\end{equation}
where $\xi\in\mathbb{R}^n$, $X\in S(n)$, and the nonlinearities $F, f$ (and consequently, $h$) are as above. Note that this is compatible with the exchange of sub- and supersolutions mentioned in Remark \ref{signchange}, since $\tilde{F}:S(n) \rightarrow \mathbb{R}$ given by
\begin{equation*}
	\tilde{F}(X) = - F(-X), \quad \textrm{ for all } X\in S(n)
\end{equation*}
is uniformly elliptic and vanishes at zero if $F$ does.

We proceed to check (i) through (iii) of property (P) for $h_1(s)=s^2 h(s)$, with $h$ as above:
\begin{enumerate}[(i)]
	\item By the growth condition \eqref{growthh}, we have
	\begin{equation*}
		\int_1^\infty \frac{s}{h_1(s)}\,ds = \int_1^\infty \frac{s}{s^2h(s)} = \int_1^\infty \frac{1}{sh(s)}\,ds < \infty.
	\end{equation*}
	\item Since $h$ is nondecreasing,
	\begin{equation*}
		L\mapsto L^2s^2 h(Ls) - CL^2s^2h(s) = L^2s^2 (h(Ls) - Ch(s))
	\end{equation*}
	is increasing for all $s>0$ and $L\geq 1$.
	\item Since $L,s>0$ will be taken large, it is equivalent to show that, for fixed $C,\tilde{C}>0$ and $\epsilon>0$,
	\begin{align*}
		& h(Ls) - Ch(s) \geq \epsilon,\\
		& \epsilon > \nicefrac{\tilde{C}}{Ls}.
	\end{align*}
	It follows from the growth condition \eqref{growthh} on $h$ that $h(s)\rightarrow\infty$ as $s\rightarrow\infty$. Hence, fixing $s>0$ and taking large enough $L=L(s)>1$, we get $h(Ls) \geq \epsilon + Ch(s).$ The second inequality above comes from choosing $L$ large as well.
\end{enumerate}

This shows \eqref{uniparaboleq} satisfies (H1). Now on to (H2). The second matrix inequality in (H2) implies $X\leq Y + o(1)$. Hence, $\mu X\leq Y + o(1)$ for any $0<\mu<1$, and from the uniform ellipticity and the definition of the Pucci operator, this gives
\begin{equation*}
	F(\mu X) - F(Y) \leq \Mcalp(\mu X - Y) \leq o(1).
\end{equation*}
For the contribution of the gradient term to the estimate of (H2), if suffices to have $h_1$ above (i.e., $s\mapsto s^2h(s)$) be locally Lipschitz and satisfy the following, as noted in Example 1 of \cite{barles2004generalized}:
\begin{enumerate}[(H1)]
	\setcounter{enumi}{2}
	\item For all $C>0$, there exists a sequence $0<\mu_\epsilon<1$ defined for $0<\epsilon\leq 1$ such that $\mu_\epsilon\rightarrow 1$ as $\epsilon\rightarrow 0$ and such that for all large $r>0$ large enough and $0<\epsilon$ small enough, we have:
	\begin{equation*}
		C\epsilon r \sup_{0\leq \tau \leq r(1+C\epsilon)} |h_1'(\tau)| \leq (1-\mu_\epsilon) \inf_{\tau\geq r(1-C(1-\mu_\epsilon))} (h_1'(\tau)\tau - h_1(\tau)).
	\end{equation*}
\end{enumerate}

Given the properties of $h$ above, a lengthy but straightforward computation shows that to verify (H3) it suffices to choose $\mu_\epsilon$ such that $\epsilon^{-1}(1-\mu_\epsilon)\rightarrow +\infty$ as $\epsilon\rightarrow 0$.

\subsection{Loss of boundary conditions}\label{subsLOBCextens}
What follows is an extension of Theorem \ref{mainonball} that includes a more general gradient term, with a suitable growth condition. Consider
\begin{equation}\label{gengradtermeq}
	u_t - \Mcalm(D^2 u) = g(|Du|) \ \textrm{ in } \ B_1(0) \times (0,T),
\end{equation}
where $g:\mathbb{R}\to\mathbb{R}$, $g$ is convex increasing for $s\geq 0$, and $g(0) = 0$. Note that \eqref{gengradtermeq} has no ``$(x,t)$-dependence'', so that Lemma \ref{infconvpreserves} applies directly. Lemma \ref{radiallemma} applies as well if $u_0$ is radially symmetric, given that $g=g(|Du|)$.

On the other hand, Proposition \ref{appeqlemma} requires a slight adaptation. In the case that $w(\hat{x},\hat{t}) < u_{\epsilon, \kappa}(\hat{x},\hat{t})$, where $(\hat{x},\hat{t})$ is a point of second-order differentiability of the regularized function $w$, we must take the regularization parameter $\delta = \delta(g)$ small enough so that
\begin{align*}
	w_t(\hat{x},\hat{t}) - \Mcalm(D^2w(\hat{x},\hat{t})) - g(|Dw(\hat{x},\hat{t})|) \geq{}& - \frac{K}{\kappa^\frac{1}{2}} + \frac{\lambda}{\delta} - \frac{\Lambda(n-1)}{\epsilon} - g\left(\frac{K}{\epsilon^\frac{1}{2}}\right)\\
	\geq{}& 0
\end{align*}
to get that $w$ is a supersolution.

For the statement of the following Lemma, we define
\begin{equation*}
	a(s) = \sup_{y>0} \frac{g^{-1}(ys)}{g^{-1}(y)}, \quad s\geq 0,
\end{equation*}
and recall the definition of the convex conjugate,
\begin{equation*}
	g^*(s) = \sup\,\{\,ys - g(y)\,|\, y \in \mathbb{R}\,\}.
\end{equation*}

\begin{lemma}\label{gengradtermlemma}
	Let $u_0\in C(\overline{B_1(0)})$ be a radial function, and $g$ as described above. Assume also that $g$ is such that \eqref{gengradtermeq} satisfies \emph{(H1)} and \emph{(H2)} of Section \ref{comparison}. If
	\begin{equation}\label{growthg*}
		\int_1^\infty \frac{g^*(La(s))}{s^2} \,ds < \infty
	\end{equation}
	for all $L>0$, then there exist positive constants $\delta$ and $M$, depending only on $\lambda, \Lambda, n$ and $g$, such that, if
	\begin{equation}\label{intconditiongengrad}
		\int_\delta^{1-\delta} u_0(r) - \frac{1}{2}\|u_0\|_\infty \,dr > M,
	\end{equation}	
	then the solution $u$ of \eqref{gengradtermeq}, \eqref{boundarydata}, \eqref{initialdata} with $\Omega = B_1(0)$ and initial data $u_0$ has LOBC at some finite time $T=T(u_0)$.
\end{lemma}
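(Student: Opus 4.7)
The plan is to follow the proof of Theorem~\ref{mainonball} verbatim, with $|Du|^p$ replaced by $g(|Du|)$ throughout; all preparatory ingredients transfer with only cosmetic changes. Proposition~\ref{appeqlemma} works once $\tilde\delta$ is chosen small enough as a function of $\tilde\epsilon$ and $g$ so that $\lambda/\tilde\delta$ absorbs the term $g(K/\tilde\epsilon^{1/2})$, as already indicated before the statement of the lemma; Lemma~\ref{radiallemma} applies directly because $g(|Du|)$ depends only on $|Du|$. With these, the regularized function $w$ satisfies
\begin{equation*}
\tilde\rho\,w_t \;\geq\; (\rho w')' + \tilde\rho\,g(|w'|) \quad\text{a.e.\ in } (\epsilon,1-\epsilon)\times(t_0,t_1).
\end{equation*}
Setting $(\lambda_1,\varphi_1) = (\lambda_1^\epsilon,\varphi_1^\epsilon)$ the principal half-eigenpair of $-\Mcalm$ on $A_\epsilon$ and $z(t) = \int_\epsilon^{1-\epsilon} w\,\varphi_1\,\tilde\rho\,dr$, the same two integrations by parts used in the proof of Theorem~\ref{mainonball} yield
\begin{equation*}
\dot z(t) \;\geq\; -\lambda_1 z(t) + I(t), \qquad I(t) := \int_\epsilon^{1-\epsilon} g(|w'|)\,\varphi_1\,\tilde\rho\,dr.
\end{equation*}

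The main obstacle is replacing the Hölder estimate \eqref{pgt2}, which exploited the homogeneity of $g(s)=s^p$, by an argument valid for a general convex $g$. The starting point is the pointwise bound
\begin{equation*}
|w'| \;=\; g^{-1}(g(|w'|)) \;\leq\; a(1/\varphi_1)\,g^{-1}\!\bigl(g(|w'|)\,\varphi_1\bigr),
\end{equation*}
which is the definition of $a$ applied with $s=1/\varphi_1$ and $y=g(|w'|)\varphi_1$. Young's inequality $\sigma\tau\leq g(\sigma)+g^*(\tau)$, applied with $\sigma=g^{-1}(g(|w'|)\varphi_1)$, $\tau=a(1/\varphi_1)/L$ and a scaling parameter $L>0$, then gives
\begin{equation*}
|w'| \;\leq\; L\,g(|w'|)\,\varphi_1 + L\,g^*\!\bigl(a(1/\varphi_1)/L\bigr).
\end{equation*}
Integrating against $\tilde\rho$, using the Hopf-type lower bound $\varphi_1(r) \gtrsim \mathrm{dist}(r,\{\epsilon,1-\epsilon\})$ from Lemma~\ref{eigenlemma} together with the change of variables $s = 1/\varphi_1$, the second integral is controlled by $\int_1^\infty g^*(a(s)/L)/s^2\,ds$, which is finite uniformly in $\epsilon$ for every $L>0$ precisely by the growth condition \eqref{growthg*}. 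Combining with the weighted Poincaré inequality of Lemma~\ref{weighedpoincare} applied to $w-w(1-\epsilon)$ (nonnegative by the minimum principle, exactly as in the proof of Theorem~\ref{mainonball}) yields, for every $L>0$,
\begin{equation*}
I(t) \;\geq\; \frac{z(t)-o(1)}{C\,L} - \widetilde C(L),
\end{equation*}
with $\widetilde C(L)\to\infty$ as $L\to 0$ at a rate dictated by $g^*$. Optimizing over $L$ then produces a superlinear lower bound $I(t)\geq G(z(t))-o(1)$, where $G$ is determined by the Legendre conjugate of $g$ and reduces to $G(z) = c z^p$ in the power case.

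Plugging this back in produces the differential inequality $\dot z(t) \geq G(z(t)) - \lambda_1 z(t) - o(1)$, whose solutions blow up in finite time whenever $z(t_0)$ exceeds a threshold depending only on $\lambda,\Lambda,n$ and $g$. As in the proof of Theorem~\ref{mainonball}, the lower bound $z(t_0) \geq \int_\delta^{1-\delta} u_0(r)\,\hat\varphi(r)\,\hat\rho(r)\,dr + o(1)$, combined with the uniform two-sided bounds on $\hat\varphi\hat\rho$ over $[\delta,1-\delta]$, translates this into a condition of the form \eqref{intconditiongengrad}; the subtraction of $\tfrac{1}{2}\|u_0\|_\infty$ arises in absorbing the linear term $\lambda_1 z(t)$ into $G(z(t))$ and reflects that, for a general $g$, the superlinear term need not dominate the linear one on all scales, so one must guarantee a net surplus above the natural upper barrier $\|u\|_\infty \leq \|u_0\|_\infty$ from Remark~\ref{uniformboundu}. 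Finite-time blow-up of $z(t)$ contradicts the hypothesis that $u$ satisfies the boundary data classically (via $w(1-\epsilon)=o(1)$), so LOBC must occur. The entire technical novelty is concentrated in the Young/$g^*$ estimate above.
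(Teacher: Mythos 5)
Your proof correctly reproduces the setup (regularization, divergence form, the eigenfunction test, the use of the function $a$ and Fenchel's inequality) but then departs from the paper's argument precisely at the step where the two proofs diverge for a structural reason, and your alternative route has a gap.

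Specifically, after obtaining the estimate
\begin{equation*}
I(t) \;\geq\; L \int_{\epsilon}^{1-\epsilon} |w'|\,\tilde{\rho}\,dr \;-\; \int_{\epsilon}^{1-\epsilon} g^*\bigl(L\,a(1/\varphi_1)\bigr)\,\tilde{\rho}\,dr
\end{equation*}
(or your equivalent reparametrization with $1/L$), the paper does \emph{not} optimize over $L$. Instead it fixes $L$ large enough to beat $\lambda_1\|\varphi_1\|_\infty$ and derives the merely \emph{linear} inequality $\dot z(t) \geq z(t) - C$. The authors explicitly observe that this ODE does not blow up in finite time; the contradiction is then reached by integrating to get $z(t_0) \leq e^{-(t_1-t_0)} z(t_1) + C$, using the uniform bound $z \lesssim \|u_0\|_\infty$ from Remark~\ref{uniformboundu} together with \eqref{upboundphi} and \eqref{overweight}, and choosing $T$ (hence $t_1 - t_0$) large enough that $e^{-T/2}$ shrinks $z(t_1)$ below $\tfrac{1}{2}\int \|u_0\|_\infty\,\hat\varphi\,\hat\rho$. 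The constant $\tfrac12\|u_0\|_\infty$ in \eqref{intconditiongengrad} thus arises from this exponential-decay-versus-uniform-bound argument, not from ``absorbing $\lambda_1 z$ into $G(z)$.''

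Your alternative route --- optimizing over $L$ to produce $\dot z \geq G(z) - \lambda_1 z - o(1)$ with $G$ ``superlinear,'' and then invoking finite-time blow-up --- is where the gap lies. Superlinearity of $G$ is not enough for finite-time blow-up: one needs $\int^\infty dz/G(z) < \infty$, which is a strictly stronger Osgood-type condition. The $G$ you obtain depends on $g^*$ and $a$ through the Legendre-type supremum $\sup_L\bigl(z/(CL) - \widetilde{C}(L)\bigr)$, and nothing in the hypotheses \eqref{growthg*}, \eqref{hhyp1}--\eqref{hhyp3} directly guarantees this Osgood condition for $G$. You verify only the power case $g(s)=s^p$, where $G(z)\sim z^p$; for borderline admissible nonlinearities such as $g(s)=s^2(\log s)^q$ with $q$ just above $1$, whether the optimized $G$ satisfies $\int^\infty dz/G(z)<\infty$ would require a separate argument that you neither give nor reduce to the stated hypotheses. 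The paper's device of settling for a linear ODE and taking $T$ large sidesteps this difficulty entirely and is what makes the proof work at the stated level of generality.

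In summary, the preparatory estimates are correct and match the paper's, but the blow-up step is a genuinely different (and unproved) route, and the attributed origin of the $\tfrac12\|u_0\|_\infty$ term is incorrect. To repair: after the Fenchel estimate, fix $L$ large rather than optimizing, derive $\dot z \geq z - C$, integrate, and exploit the uniform bound on $z$ together with a large choice of $T$ exactly as in the paper's proof of Lemma~\ref{gengradtermlemma}.
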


\textit{Proof:} Aside from using all the auxiliary results leading to Theorem \ref{mainonball}, the proof follows that of Theorem 5.2 in \cite{souplet2002gradient}. We repeat most of the argument for convenience. Once more, we proceed by contradiction, assuming $u$ is a solution which satisfies \eqref{boundarydata} in the classical sense. We consider $\varphi_1$ as previously defined, and again denote by $w$ the function obtained by regularizing the radial part of the solution $u$ of \eqref{gengradtermeq} for $\Omega=B_1(0)$. This function now satisfies, for arbitrary $\epsilon>0$ and $0< t_0 < t_1<T$,
\begin{equation*}
	\tilde{\rho} w_t \geq (\rho w')' + 	\tilde{\rho}g(|w'|) \quad \textrm{for a.e. } r\in(\epsilon,1-\epsilon), t\in (t_0,t_1).
\end{equation*}

From the definition of $a$, setting $y = g(|w'(r,t)|)\varphi_1(r,t)$ for ${r\in(\epsilon, 1-\epsilon)}$, ${ t_0<t<t_1}$, we have
\begin{equation*}
	a(\nicefrac{1}{\varphi_1}) g^{-1}(g(|w'|)\varphi_1) \geq g^{-1}(\nicefrac{y}{\varphi_1}) = |w'|.
\end{equation*}
Hence,
\begin{equation}\label{g-a-ineq}
	g(|w'|)\varphi_1 \geq g\left(\frac{|w'|}{a(\nicefrac{1}{\varphi_1})}\right).
\end{equation}
Let $L>0$ to be chosen later. By the definition of the convex conjugate, we have 
\begin{equation*}
	L|w'| = \frac{|w'|}{a(\nicefrac{1}{\varphi_1})}La(\nicefrac{1}{\varphi_1}) \leq g\left(\frac{|w'|}{a(\nicefrac{1}{\varphi_1})}\right) + g^*(La(\nicefrac{1}{\varphi_1}))
\end{equation*}
for a.e.~$r\in(\epsilon, 1-\epsilon), t_0<t<t_1$ (we have omitted the arguments for simplicity). This is an instance of Fenchel's inequality, analogous to that of Hölder's inequality in the proof of Theorem \ref{mainonball}.

Using \eqref{g-a-ineq}, we have
\begin{equation*}
	L\int_{\epsilon}^{1-\epsilon} |w'| \tilde{\rho} \,dr \leq \int_{\epsilon}^{1-\epsilon} g(|w'|)\varphi_1 \tilde{\rho} \,dr + \int_{\epsilon}^{1-\epsilon} g^*(La(\nicefrac{1}{\varphi_1})) \tilde{\rho} \,dr.
\end{equation*}
That the second integral is finite follows from \eqref{growthg*}. It can also be proven that it is bounded by a constant C that does not depend on $\epsilon$, as in Lemma \ref{eigenlemma}.

Arguing as in the beginning of the proof of Theorem \ref{mainonball}, we obtain
\begin{equation*}
	\dot{z}(t) + \lambda_1 z(t) \geq I
\end{equation*}
where $z$ is defined exactly as before, but now
\begin{align*}
	I :={}& \int_{\epsilon}^{1 - \epsilon} g(|w'|) \varphi_1 \tilde{\rho} \,dr\\
	\geq{}& L \int_{\epsilon}^{1 - \epsilon} |w'|\, \tilde{\rho} \,dr - \int_{\epsilon}^{1-\epsilon} g^*(La(\nicefrac{1}{\varphi_1})) \tilde{\rho} \,dr\\
	\geq{}& L \int_{\epsilon}^{1 - \epsilon} |w'|\, \tilde{\rho} \,dr - C\\
	\geq{}& L \left(\int_{\epsilon}^{1 - \epsilon} |w|\, \tilde{\rho} \,dr - w(1-\epsilon, t) \right) - C,
\end{align*}
where the last inequality is follows by applying Lemma \ref{weighedpoincare}. Setting $L$ sufficiently large, in terms of $\|\varphi_1\|_\infty$ and $\lambda_1$ (both of which are independent of $\epsilon$), and noting that $w(1-\epsilon, t)=o(1)$ as $\epsilon\to 0$, as before, we obtain
\begin{equation}\label{EDOgengrad} 
	\dot{z}(t) \geq z(t) - C \quad \textrm{for a.e. } t\in(t_0,t_1),
\end{equation}
which we can integrate to get
\begin{equation}\label{EDOgengradint} 
	z(t) \geq (z(t_0) - C)e^{t - t_0} \quad \textrm{for all } t\in(t_0,t_1).
\end{equation}

To conclude, note that \eqref{EDOgengrad} does not blowup in finite time, as does \eqref{blowupODE}. We will find a contradiction in the form of a bound, derived from \eqref{EDOgengradint}, which we can easily violate by choosing the appropriate $u_0$. Also, since our aim is to prove nonexistence beyond some finite time,  we may assume $T>0$ is large to achieve this contradiction. By choosing the time-regularization parameter small as well, the difference $t_1-t_0$ can be made large as well, say $t_1-t_0\geq\nicefrac{T}{2}$.

As in \eqref{estimzinit}, we have
\begin{equation}\label{estimminitzext}
	z(t_0) \geq \int_{\delta}^{1-\delta} u_0(r) \hat{\varphi}(r) \hat{\rho}(r)\,dr + o(1), \quad \textrm{as }\epsilon,\, t_0\to 0.
\end{equation}
On the other hand, by evaluating \eqref{EDOgengradint} at $t=t_1$,
\begin{align*}
	z(t_0) \leq{}& e^{-(t_1-t_0)} z(t) + C\\
	\leq{}& e^{-\frac{T}{2}}\int_{\epsilon}^{1-\epsilon} w(r,t)\varphi_1(r)\tilde{\rho} \,dr + C.
\end{align*}
Recalling the bounds for $\varphi_1,\,\tilde{\rho}$, that $\hat{\varphi},\,\hat{\rho}$ are bounded by below by a positive constant in $[\delta, 1-\delta]$, and that $\|w\|_\infty\leq\|u_0\|_\infty$, we take $T$ sufficiently large so that
\begin{equation*}
	z(t_0) \leq \frac{1}{2} \int_{\delta}^{1-\delta} \|u_0\|_\infty\hat{\varphi}\hat{\rho}\,dr + C.
\end{equation*}
Combining both estimates for $z(t_0)$, we have
\begin{equation*}
	\int_\delta^{1-\delta} (u_0 - \frac{1}{2}\|u_0\|_\infty) \hat{\varphi}(r)\hat{\rho}(r) \,dr < C,
\end{equation*}
where $C=C(\lambda, \Lambda, n, g)$. This bound is readily violated by choosing a suitably large $u_0$, and as before, it is equivalent to the one in the statement of the lemma since $\hat{\varphi}$ and $\hat{\rho}$ are uniformly bounded from below in $[\delta, 1-\delta]$. \hfill\qed

Finally, we proceed with the proof of the extension mentioned in the introduction.
\noindent\textit{Proof of Theorem \ref{uniparabolthm}:} First, define $g(s) = s^2 h(s)$, where $h$ satisfies {\eqref{hhyp1}-\eqref{hhyp3}} and the growth condition \eqref{growthh}. It is proven in Lemma 5.3 and the Completion of Theorem 2.2 in \cite{souplet2002gradient} that $g$ so defined satisfies the hypothesis of Lemma \ref{gengradtermlemma}, including \eqref{growthg*}. Furthermore, using \eqref{extremalbounds}, if $u$ is a solution of \eqref{uniparaboleq}, formally we have that
\begin{align*}
	u_t - \Mcalm(D^2 u) \geq u_t - F(D^2 u) = f(Du) \geq |Du|^2 h(|Du|) ={}& g(|Du|)\\
	&{}\textrm{ in } \Omega \times (0,T),
\end{align*}
and this is readily checked using test functions. Hence, $u$ is a supersolution of \eqref{gengradtermeq} in $\Omega\times (0,T)$. Using the interior sphere condition, without loss of generality we may assume that $B_1(0)\subset \Omega$ with $B_1(0)$ tangent to $\partial \Omega$ at some point $x_0\in\partial \Omega \cap \partial B_1(0)$ (This is equivalent to repeating the constructions of Theorems \ref{mainonball} and \ref{gengradtermlemma} on a ball of arbitrary radius and performing translation.) Arguing as in the proof of Corollary \ref{gendomain}, we conclude that $u$ is a supersolution of \eqref{gengradtermeq} in $B_1(0)\times (0,T)$. Let $\tilde{u}_0 \in C(\overline{B_1(0)})$ nonnegative, radially symmetric and satisfies \eqref{intconditiongengrad}, and consider the solution $\tilde{u}$ of \eqref{gengradtermeq} with initial data $\tilde{u}_0$ and homogeneous boundary data. By Lemma \ref{gengradtermlemma}, $\tilde{u}$ has LOBC in finite time $T'>0$, and since it is radially symmetric we may conclude that LOBC occurs at $x_0\in B_1(0)$. Now define
\begin{equation*}
	u_0(x) =\left\{\begin{array}{rl}
	\tilde{u}_0(x), &\quad x\in B_1(0)\\[6pt]
	0, &\quad x\in \overline{\Omega}\backslash B_1(0).
	\end{array}\right.
\end{equation*}
By comparison, we have that the solution $u$ of \eqref{uniparaboleq}-\eqref{boundarydata}-\eqref{initialdata} with initial data $u_0$ satisfies $u(x_0,T')\geq\tilde{u}(x_0,T')>0$, hence $u$ has LOBC.\hfill\qed

\begin{remark}
	The more general nonlinearities do not admit a rescaling argument like the one given in the proof of Corollary \ref{gendomain}. Furthermore, \eqref{uniparaboleq} may no longer have radial symmetry. The preceding argument is in some sense simpler, but it does not provide a condition one can check for any given initial data like the one in Corollary \ref{gendomainbis}.
\end{remark}

\begin{remark}
	The typical example for the nonlinearity $h$ in Theorem \ref{uniparabolthm} is $h(s) = (\log s)^q$ for $q>0$ and large $s$. In this case, the growth condition \eqref{growthh} forces that $q>1$. This is consistent with what is known to be a more precise condition for preventing GBU in the case of the viscous Hamilton-Jacobi equation: for
	\begin{equation*}
		u_t - \Delta u = f(u,\nabla u) \quad \textrm{ in } \Omega\times (0,T),
	\end{equation*}
	GBU does not occur if
	\begin{equation*}
		|f(u,\nabla u)| \leq C(u)(1 + |\nabla u|^2)h(|\nabla u|)
	\end{equation*}
	where $C(u)$ is locally bounded, and $h$ is positive nondecreasing and satisfies
	\begin{equation*}
		\int_1^\infty \frac{1}{sh(s)} = \infty.
	\end{equation*}
	See \cite{quittner2007superlinear}, Ch. IV, and the references therein.
\end{remark}

\begin{remark}\label{rmkextSphereUni}
	The proof of Theorem \ref{uniparabolthm} uses only the uniform interior sphere condition. Both interior and exterior sphere conditions are assumed to establish a connection between Theorems \ref{localexthm} and \ref{uniparabolthm}. Specifically, to have both results applicable in the same situation. See also Remarks \ref{intSphereLocal}.
\end{remark}

\noindent\textbf{Acknowledgments:} A.Q.~was partially supported by Fondecyt Grant No. 1151180, Programa Basal, CMM, U. de Chile and Millennium Nucleus Center for Analysis of PDE NC130017. A.R.~was partially supported by Conicyt, Beca Doctorado Nacional 2016, and Programa de Iniciación a la Investigación Científica (PIIC) 2015, Universidad Técnica Federico Santa María.

\bibliographystyle{plain}
\bibliography{aerp.bib}

\vspace{5mm}
\noindent \textsc{Alexander Quaas}\\
\noindent \textit{Email:} alexander.quaas@usm.cl\\
\noindent \textsc{Andrei Rodríguez}\\
\noindent \textit{Email:} andrei.rodriguez.14@sansano.usm.cl\\[4pt]
\noindent \textsc{Departamento de Matemática, Universidad Santa María, Casilla: V-110, Avda. España 1680, Valparaíso, Chile.}

\end{document}